\theoremstyle{plain}
\newtheorem{theo}{Theorem}[section]
\newtheorem{lem}[theo]{Lemma}
\newtheorem{cor}[theo]{Corollary}
\newtheorem{prop}[theo]{Proposition}
\theoremstyle{definition}
\newtheorem{defi}[theo]{Definition}
\newtheorem{rem}[theo]{Remark}
\theoremstyle{plain}
\newtheorem*{prop_sans_num}{Proposition}
\theoremstyle{definition}
\newtheorem*{defi_sans_num}{Definition}
\newtheorem*{rem_sans_num}{Remark}
\theoremstyle{plain}
\newtheorem{theorem}{Theorem}
\def\shaun#1{\textcolor{blue}{#1}}
\title{Universal Weil module}
\author{Justin Trias}
\date{}
\begin{document}

\maketitle

\begin{abstract} The classical construction of the Weil representation, with complex coefficients, has long been expected to work for more general coefficient rings. This paper exhibits the minimal ring $\mathcal{A}$ for which this is possible, the integral closure of $\mathbb{Z}[\frac{1}{p}]$ in a cyclotomic field, and carries out the construction of the Weil representation over $\mathcal{A}$-algebras. As a leitmotif all along the work, most of the problems can actually be solved over the base ring $\mathcal{A}$ and transferred to any $\mathcal{A}$-algebra by scalar extension. The most striking fact is that all these Weil representations arise as the scalar extension of a single one with coefficients in $\mathcal{A}$. In this sense, the Weil module obtained is universal. Building upon this construction, we speculate and make predictions about an integral theta correspondence. \end{abstract}

\section*{Introduction}

This paper is intended as a stepping-stone in the direction of an ``integral theta correspondence''. Whatever this may be, it will require a theory of the Weil representation over rings and the purpose of this paper is to carry this out on rings with minimal hypotheses. When the coefficient ring is the field of complex numbers, this representation originated in problems related to $\theta$-series and was first constructed in the seminal paper \cite{weil_article} of André Weil. 

There is another way, as opposed to the original approach of Weil, to build this representation. Because of its relations with quantum physics, it appears often in older literature as the so-called oscillator representation and involves the famous Stone-von Neumann theorem as a cornerstone in this alternative construction \cite{howe}. It plays a pivotal role in the theta correspondence, where the interplay between this representation and the dual pairs introduced by Roger Howe \cite{howe} led to a conjectural bijective correspondence between some subsets of irreducible representations for each member of the dual pair. 

This correspondence, known in older literature as \emph{Howe duality} or \emph{the Howe correspondence}, took almost 40 years to be completely proven, and is now usually known as \emph{the theta correspondence}. The main works which led to its proof include \cite{howe,rallis,kudla_invent,mvw,wald,minguez_howe,gt,gan_sun} and we refer to \cite{trias_theta1} for a more detailed exposition of these contributions to the classical theta correspondence. This celebrated bijection plays a central role in number theory as it encodes a lot of arithmetic information and allows one to build automorphic forms. It is the centre of a highly active research field in the topic.

In the 1980's, Marie-France Vignéras studied, in relation to Serre's conjectures, congruences between automorphic representations by means of the modular representation theory of their local factors. She considered smooth representations of connected reductive $p$-adic groups with coefficients in fields that are more general than the complex numbers, allowing in particular fields of positive characteristic. The theory splits into two different aspects, depending on whether the characteristic of the coefficient is different from $p$ or not. In the first case, which we study here, we talk about $\ell$-modular representations by implicitly meaning that $\ell \neq p$. (The second case is referred to as modulo $p$ representation theory and requires complete different techniques.)

An important result about these $\ell$-modular representations is the compatibility of the classical local Langlands correspondence for general linear groups with a certain $\ell$-modular one as described in \cite{vig_corresp_Langlands_ss_mod_ell}. In recent years, there has been a growing interest in studying representations in families \textit{i.e.} over coefficient rings where $p$ is invertible. For general linear groups, families of representations with coefficients in a Witt ring $W(\overline{\mathbb{F}_\ell})$ are quite well understood \cite{helm_bernstein_center} and provides a local Langlands correspondence in families \cite{emerton_helm_families} compatible with (a modified version due to Breuil--Schneider of) the classical one and the one constructed by Vignéras.

In terms of the theta correspondence and the Weil representation, a generalisation to $\ell$-modular representation theory has already been considered in the thesis of Alberto M\'inguez \cite{minguez_thesis}. Taking an \textit{ad-hoc} analogue of the Weil representation for type II dual pairs, he proves that a bijective correspondence holds when the characteristic is big enough compared to the size of the dual pair at play. In order to develop a modular theory of the theta correspondence, this analogue is not sufficient and one needs a proper construction of the Weil representation for coefficient fields, or even coefficient rings. 

In~\cite{shin_weil}, Sug Woo Shin achieves this for coefficient rings such that the associated affine scheme is locally noetherian, by the use of geometric methods such as a Stone-von Neumann theorem involving abelian schemes. In \cite{ct}, the authors build a Weil representation with coefficients in integral domains following the original approach of Weil. The other representation-theoretic strategy, using a non-geometric version of the Stone-von Neumann theorem, has been carried out in \cite{trias_theta1}. The latter allows one to recover most of the classical objects and study them in detail, such as the metaplectic group, the metaplectic cocycle, and the lifts of dual pairs. Furthermore, this approach generalises in a nice way in families without the need of particular assumptions on the coefficient rings, improving the first two mentioned papers whose hypotheses (locally noetherian affine scheme, or, integral domains) turn out to be more restrictive. 

The present paper brings a broader construction of the Weil representation with coefficients in any $\mathcal{A}$-algebra, where $\mathcal{A}$ is a minimal ring specified below. In addition, exhibiting a minimal Weil representation, called ``universal'' below, does not appear in any previous work; nor the focus on extending scalars. The rest of the introduction is split into two parts: in the first half we give more detail about the results we obtain along these lines, as well as considerations about the metaplectic group and the metaplectic cocycle; in the second half, we explain how we expect to use this to study an integral theta correspondence, with particular focus on the special case of $(\textup{GL}_1,\textup{GL}_1)$.

Let $F$ be either a local non-archimedean field, or a finite field, of residual characteristic $p$ and residual cardinality $q$, but of characteristic not $2$. The minimal condition mentioned above amounts to requiring two things: first that a non-trivial smooth additive character $\psi$ of $F$ exists, allowing Fourier transform techniques; second that $p$ is invertible, that is a condition in terms of Haar measures. Write $\mathcal{K} = \mathbb{Q}[\zeta_p]$ when $F$ has positive characteristic, and $\mathcal{K}=\mathbb{Q}[\zeta_{p^\infty}]$ when $F$ has characteristic $0$. The minimal ring $\mathcal{A}$ satisfying the previous two conditions is the integral closure of $\mathbb{Z}[\frac{1}{p}]$ in $\mathcal{K}$.

Fix from now on a non-trivial smooth character $\psi : F \to \mathcal{A}^\times$. Notations will be simplified in the introduction to be lighter than that in the main body of the paper. For any $\mathcal{A}$-algebra $\mathcal{B}$ with structure morphism $\varphi$, the character $\psi^\mathcal{B} = \varphi \circ \psi$ is a non-trivial character of $F$ with values in $\mathcal{B}$. More generally, if~$\chi$ is a character of any group with values in~$\mathcal{A}$, we write~$\chi^{\mathcal{B}}=\varphi\circ\chi$.

\subsection{Theory of the Weil representation over an $\mathcal{A}$-algebra $\mathcal{B}$}

The theory developed in \cite[Chap. 2]{mvw} for complex representations and in \cite{trias_theta1} for $\ell$-modular representations finds a natural generalisation for an $\mathcal{A}$-algebra $\mathcal{B}$. Note that there are no restrictive assumptions on the $\mathcal{A}$-algebra considered. In particular, it is not necessarily an integral domain.

\subsubsection*{Stone-von Neumann over $\mathcal{A}$-algebras} 

Let~$A$ be a self-dual subgroup in the symplectic space $W$ and $\psi_A$ a character of the group $A_H=A\times F$ extending $\psi$. The theorem below gathers together in a succinct way the main results from Sections~\ref{models_associated_to_self_dual_subgp_section} \& \ref{changing_models_section}. It is the core part of the classical Stone-von Neumann theorem when $\mathcal{B}=\mathbb{C}$ \cite[Chap. 2, Th. I.2]{mvw} and its generalisation when $\mathcal{B}$ is a field of characteristic different from $p$ \cite[Th. 2.1]{trias_theta1}. Working over a general ring, the notion of ``irreducible representation'' is too restrictive. Instead, we say that a $\mathcal{B}[G]$-module $V$ is \emph{everywhere irreducible} if the representation $V \otimes_\mathcal{B} k(\mathcal{P})$ is irreducible for all~$\mathcal{P} \in \textup{Spec}(\mathcal{B})$, where~$k(\mathcal P)$ is the field of fractions of~$\mathcal B/\mathcal P$. 

\begin{theorem} \label{metaplectic_reps_first_thm} 
Set $V_A^\mathcal{B} = \textup{ind}_{A_H}^H(\psi_A^\mathcal{B}) \in \textup{Rep}_\mathcal{B}(H)$.
\begin{enumerate}[label=\textup{\alph*)}]
\item \label{everywhere_irreducible_point} Set $V_A^\mathcal{B}$ is everywhere irreducible, and is admissible.
\item \label{extension_scalar_point} We have $V_A^{\mathcal{A}} \otimes_\mathcal{A} \mathcal{B} = V_A^\mathcal{B}$.
\item \label{changing_models_point} For~$A'$ any self-dual subgroup in~$W$ and~$\psi_{A'}$ an extension of~$\psi$ to~$A'_H$, one has:
$$\textup{Hom}_{\mathcal{B}[H]}(V_A^\mathcal{B}, V_{A'}^\mathcal{B}) \simeq \mathcal{B}.$$
\end{enumerate} \end{theorem}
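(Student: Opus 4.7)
My plan is to take (b) as the cornerstone and deduce the other two items by base change to fibres, where the classical (or $\ell$-modular) Stone--von Neumann theory is available. For (b), realise $V_A$ concretely as the $\mathcal{A}$-module of locally constant functions $f : H \to \mathcal{A}$, compactly supported modulo $A_H$, satisfying $f(ah) = \psi_A(a) f(h)$. This module is $\mathcal{A}$-free, with a basis indexed by a suitable set of coset representatives for $A_H \backslash H$ adapted to the smoothness cut-off; tensoring with $\mathcal{B}$ over $\mathcal{A}$ returns the analogous $\mathcal{B}$-module, whose transformation law reads $f(ah) = \psi_A^{\mathcal{B}}(a) f(h)$ by Lemma \ref{extension_of_psi_B_lem}, while the $H$-action by left translation is untouched by the change of coefficients.

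Everywhere irreducibility in (a) is a fibrewise condition: for every prime $\mathfrak{p} \subset \mathcal{B}$, I must show $V_A^{\mathcal{B}} \otimes_{\mathcal{B}} \kappa(\mathfrak{p})$ is irreducible, and by (b) this fibre identifies with $V_A^{\kappa(\mathfrak{p})}$. Since $p$ is invertible in $\mathcal{A}$, the residue field $\kappa(\mathfrak{p})$ has characteristic different from $p$, and irreducibility then follows from the classical Stone--von Neumann theorem \cite[Chap.~2, Th.~I.2]{mvw} in characteristic $0$ and from its $\ell$-modular refinement \cite[Th.~2.1]{trias_theta1} when $\textup{char}(\kappa(\mathfrak{p})) = \ell \neq p$. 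Admissibility is checked directly on the model: for $K \subset H$ a compact open subgroup, the space $(V_A^{\mathcal{B}})^K$ identifies with $\psi_A^{\mathcal{B}}$-equivariant $\mathcal{B}$-valued functions on $A_H \backslash H / K$, and the finiteness of the relevant double cosets (inherited from the field case) makes this a free $\mathcal{B}$-module of finite rank.

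For (c), I would prove the statement first over the base ring, $\textup{Hom}_{\mathcal{A}[H]}(V_A, V_{A'}) \simeq \mathcal{A}$, and then transfer to $\mathcal{B}$. A generator is supplied by the classical Fourier-type intertwiner between the two models, given by an explicit integral formula. Rigidity follows from (a): everywhere irreducibility plus admissibility force the Hom space at each fibre to be $1$-dimensional by Schur, and $\mathcal{A}$-flatness of $V_A$ and $V_{A'}$ upgrades this fibrewise information to $\textup{Hom}_{\mathcal{A}[H]}(V_A, V_{A'}) \simeq \mathcal{A}$; the version over $\mathcal{B}$ then follows from flat base change
\[
\textup{Hom}_{\mathcal{A}[H]}(V_A, V_{A'}) \otimes_{\mathcal{A}} \mathcal{B} \;\simeq\; \textup{Hom}_{\mathcal{B}[H]}(V_A^{\mathcal{B}}, V_{A'}^{\mathcal{B}}).
\]
The main obstacle is precisely the $\mathcal{A}$-integrality of this intertwiner: writing the formula is classical, but normalising the Haar measures and the various arithmetic constants so that every matrix coefficient lies in $\mathcal{A}$ itself (rather than merely in a fraction field) is where the choice of $\mathcal{A}$ as the integral closure of $\mathbb{Z}[\tfrac{1}{p}]$ in $\mathcal{K}$ is put to work, and is presumably the heart of Section \ref{changing_models_section}.
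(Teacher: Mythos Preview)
Your approach to (a) via fibres is a legitimate shortcut and differs from the paper: you invoke the known field-coefficient Stone--von Neumann to get irreducibility of each $V_A^{\kappa(\mathfrak{p})}$, whereas the paper proves directly over any $\mathcal{B}$ the stronger statement that $I \mapsto I V_A^{\mathcal{B}}$ is a bijection between ideals of $\mathcal{B}$ and $\mathcal{B}[H]$-submodules of $V_A^{\mathcal{B}}$ (Proposition~\ref{metaplectic_representations_prop}~a)), then deduces everywhere irreducibility from this via Lemma~\ref{everywhere_irreducible_lemma}. Your route is shorter for the bare statement of (a), but it does not give you this ideal--submodule bijection, and that bijection is precisely what the paper uses to handle (c) uniformly over $\mathcal{B}$.

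This is where your argument for (c) breaks. You write that the case of general $\mathcal{B}$ ``follows from flat base change''
\[
\textup{Hom}_{\mathcal{A}[H]}(V_A, V_{A'}) \otimes_{\mathcal{A}} \mathcal{B} \;\simeq\; \textup{Hom}_{\mathcal{B}[H]}(V_A^{\mathcal{B}}, V_{A'}^{\mathcal{B}}),
\]
but $\mathcal{B}$ is an \emph{arbitrary} $\mathcal{A}$-algebra, not assumed flat, and you have not shown $V_A$ is finitely presented as an $\mathcal{A}[H]$-module; so neither standard hypothesis for Hom-base-change is available. Knowing the left side is $\mathcal{B}$ and that the generator $\Phi$ stays an isomorphism after $\otimes_{\mathcal{A}}\mathcal{B}$ only tells you the map is injective; surjectivity would require Schur over $\mathcal{B}$, i.e.\ $\textup{End}_{\mathcal{B}[H]}(V_A^{\mathcal{B}})=\mathcal{B}$, and fibrewise irreducibility alone does not deliver that for a general ring $\mathcal{B}$. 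The paper instead proves (c) directly over $\mathcal{B}$ (Proposition~\ref{intertwining_maps_models_A1_A2_prop}): it writes down the explicit intertwiner $I_{A,A',\mu,\omega}$, uses the ideal--submodule bijection to see its image is $I_\mu V_{A'}^{\mathcal{B}}$, and checks by an explicit evaluation that $I_\mu=\mathcal{B}$ when $\mu$ is normalised. Schur over $\mathcal{B}$ is likewise proved by hand (Proposition~\ref{metaplectic_representations_prop}~c)) using the projectors $\phi_{w,L}$ of Lemma~\ref{metaplectic_rep_small_lemma}. In short: your fibrewise strategy works for (a) but cannot be pushed to (c) without an independent Schur-type input over $\mathcal{B}$, which is exactly what the paper's direct submodule analysis supplies.
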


A consequence of \ref{everywhere_irreducible_point} and \ref{changing_models_point} is that the isomorphism class of the representation $V_A^\mathcal{B}$ does not depend on the choices of $A$ and $\psi_A$. When~$\mathcal{B}$ is a field, this representation is also irreducible.

The full Stone-von Neumann Theorem for fields~$\mathcal{B}$ also asserts that any irreducible $V \in \textup{Rep}_\mathcal{B}(H)$, such that $V|_F$ is $\psi^\mathcal{B}$-isotypic, is in the isomorphism class defined by $V_A^\mathcal{B}$. We do not pursue such a precise result over rings. However, for most of the applications using Stone-von Neumann, and the Weil representation, one usually sticks to the explicit models given by the representations $V_A^\mathcal{B}$, where $A$ is a self-dual lattice or a lagrangian, so our result is sufficient.

\subsubsection*{Weil representations over $\mathcal{A}$-algebras}

Let $A$ be a self-dual subgroup in $W$. According to Section \ref{weil_over_A_algebra_section}, the action of $\textup{Sp}(W)$ on $H$ induces a projective representation 
$\sigma_\mathcal{B} : \textup{Sp}(W) \to \textup{PGL}_\mathcal{B}(V_A^\mathcal{B})$. 
Denote by $\textsc{red} : \textup{GL}_\mathcal{B}(V_A^\mathcal{B}) \to \textup{PGL}_\mathcal{B}(V_A^\mathcal{B})$ the quotient morphism. To lift a projective representation, one uses the fiber product construction to obtain a representation of some central extension. Looking at the fiber product of $\sigma_\mathcal{B}$ and $\textsc{red}$ above $\textup{PGL}_\mathcal{B}(V_A^\mathcal{B})$, Proposition \ref{weil_rep_def_prop} defines:
$$\xymatrix{
		\widetilde{\textup{Sp}}_{\psi,A}^\mathcal{B}(W) \ar[r]^{\omega_{\psi,A}^\mathcal{B}} \ar[d]_{p_\mathcal{B}} & \textup{GL}_\mathcal{B}(V_A^\mathcal{B})  \ar[d]^{\textsc{red}} \\
		\textup{Sp}(W) \ar[r]^{\sigma_\mathcal{B}} & \textup{PGL}_\mathcal{B}(V_A^\mathcal{B})}.$$

\begin{defi_sans_num} The Weil representation associated to $\psi$ and $A$ with coefficients in $\mathcal{B}$ is the representation $(\omega_{\psi,A}^\mathcal{B},V_A^\mathcal{B})$ of the central extension $\widetilde{\textup{Sp}}_{\psi,A}^\mathcal{B}(W)$ of $\textup{Sp}(W)$ by $\mathcal{B}^\times$. \end{defi_sans_num}

Recalling the canonical identification $V_A^\mathcal{A} \otimes_\mathcal{A} \mathcal{B} = V_A^\mathcal{B}$ from \ref{extension_scalar_point} of Theorem~\ref{metaplectic_reps_first_thm} above, our Theorem \ref{weil_rep_compatib_PhiB_thm} ensures the compatibility:

\begin{theorem} \label{weil_rep_comptib_PhiB_thm_intro} There exists a canonical morphism of central extensions:
$$\widetilde{\phi_\mathcal{B}} : \widetilde{\textup{Sp}}_{\psi,A}^\mathcal{A}(W) \to \widetilde{\textup{Sp}}_{\psi,A}^\mathcal{B}(W)$$
whose image is a central extension of $\textup{Sp}(W)$ by $\varphi(\mathcal{A})^\times$. Moreover, there is a commuting diagram:
$$\xymatrix{
		\widetilde{\textup{Sp}}_{\psi,A}^\mathcal{A}(W) \ar[r]^{\omega_{\psi,A}^\mathcal{A}} \ar[d]^{\widetilde{\phi_\mathcal{B}}} & \textup{GL}_\mathcal{A}(V_A)  \ar[d] \\
		\widetilde{\textup{Sp}}_{\psi,A}^\mathcal{B}(W) \ar[r]^{\omega_{\psi,A}^\mathcal{B}} & \textup{GL}_\mathcal{B}(V_A^\mathcal{B}).
		}$$ \end{theorem}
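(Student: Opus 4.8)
The plan is to build $\widetilde{\phi_\mathcal{B}}$ from the universal property of the fibre product defining $\widetilde{\textup{Sp}}_{\psi,A}^\mathcal{B}(W)$, the only substantial input being that the projective representations $\sigma_\mathcal{A}$ and $\sigma_\mathcal{B}$ are compatible under scalar extension. Write $\textsc{red}_\mathcal{A}$, $\textsc{red}_\mathcal{B}$ for the quotient morphisms $\textup{GL} \to \textup{PGL}$ over $\mathcal{A}$ and $\mathcal{B}$, and let $j \colon \textup{GL}_\mathcal{A}(V_A) \to \textup{GL}_\mathcal{B}(V_A^\mathcal{B})$ be the group morphism $u \mapsto u \otimes_\mathcal{A} \mathrm{id}_\mathcal{B}$, which is meaningful thanks to the identification $V_A \otimes_\mathcal{A} \mathcal{B} = V_A^\mathcal{B}$ of Theorem \ref{metaplectic_reps_first_thm}\,\ref{extension_scalar_point}. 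Since $j$ sends a scalar $\lambda \in \mathcal{A}^\times$ to $\varphi(\lambda) \in \mathcal{B}^\times$, it descends to $\bar\jmath \colon \textup{PGL}_\mathcal{A}(V_A) \to \textup{PGL}_\mathcal{B}(V_A^\mathcal{B})$, and by construction $\textsc{red}_\mathcal{B} \circ j = \bar\jmath \circ \textsc{red}_\mathcal{A}$.

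The key step is the identity $\bar\jmath \circ \sigma_\mathcal{A} = \sigma_\mathcal{B}$, and this is where I expect the genuine work to lie. Fix $g \in \textup{Sp}(W)$ and let $\rho_\mathcal{A}$ be the action of $H$ on $V_A$, so $\rho_\mathcal{B} = \rho_\mathcal{A} \otimes_\mathcal{A} \mathrm{id}_\mathcal{B}$ is the action on $V_A^\mathcal{B}$. By the very definition of $\sigma_\mathcal{A}$ in Proposition \ref{weil_rep_def_prop}, a preimage of $\sigma_\mathcal{A}(g)$ under $\textsc{red}_\mathcal{A}$ is any $u \in \textup{GL}_\mathcal{A}(V_A)$ implementing the automorphism of $H$ attached to $g$, i.e. with $u\,\rho_\mathcal{A}(h)\,u^{-1} = \rho_\mathcal{A}(g\cdot h)$ for all $h \in H$ (such $u$ exist, and form a single $\mathcal{A}^\times$-orbit, by Theorem \ref{metaplectic_reps_first_thm}\,\ref{everywhere_irreducible_point},\ref{changing_models_point}); applying $j$ and using $\rho_\mathcal{B} = \rho_\mathcal{A} \otimes \mathrm{id}_\mathcal{B}$ shows $j(u)\,\rho_\mathcal{B}(h)\,j(u)^{-1} = \rho_\mathcal{B}(g\cdot h)$, so $j(u)$ represents $\sigma_\mathcal{B}(g)$, whence $\bar\jmath(\sigma_\mathcal{A}(g)) = \bar\jmath(\textsc{red}_\mathcal{A}(u)) = \textsc{red}_\mathcal{B}(j(u)) = \sigma_\mathcal{B}(g)$. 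Equivalently one can argue with intertwiners: the essentially unique $\mathcal{A}[H]$-isomorphism $V_A^g \xrightarrow{\sim} V_A$ scalar-extends to a $\mathcal{B}[H]$-isomorphism $(V_A^\mathcal{B})^g \xrightarrow{\sim} V_A^\mathcal{B}$, which, being invertible, is a unit multiple of a generator of the free rank-one module $\textup{Hom}_{\mathcal{B}[H]}((V_A^\mathcal{B})^g, V_A^\mathcal{B})$ furnished by Theorem \ref{metaplectic_reps_first_thm}\,\ref{changing_models_point}, so it still represents $\sigma_\mathcal{B}(g)$. It is exactly here that the ``everywhere irreducible'' formalism and the scalar-extension compatibility of Theorem \ref{metaplectic_reps_first_thm} are used.

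Granting $\bar\jmath \circ \sigma_\mathcal{A} = \sigma_\mathcal{B}$, the remainder is formal. To produce a morphism from $\widetilde{\textup{Sp}}_{\psi,A}^\mathcal{A}(W)$ into the fibre product $\widetilde{\textup{Sp}}_{\psi,A}^\mathcal{B}(W)$ one needs compatible morphisms to $\textup{Sp}(W)$ and to $\textup{GL}_\mathcal{B}(V_A^\mathcal{B})$, and I take $p_\mathcal{A}$ and $j \circ \omega_{\psi,A}^\mathcal{A}$; their compatibility is the chain
$$\textsc{red}_\mathcal{B} \circ j \circ \omega_{\psi,A}^\mathcal{A} = \bar\jmath \circ \textsc{red}_\mathcal{A} \circ \omega_{\psi,A}^\mathcal{A} = \bar\jmath \circ \sigma_\mathcal{A} \circ p_\mathcal{A} = \sigma_\mathcal{B} \circ p_\mathcal{A},$$
using in turn $\textsc{red}_\mathcal{B} \circ j = \bar\jmath \circ \textsc{red}_\mathcal{A}$, the defining square of $\widetilde{\textup{Sp}}_{\psi,A}^\mathcal{A}(W)$, and the key step. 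The universal property then yields a unique group morphism $\widetilde{\phi_\mathcal{B}}$ with $p_\mathcal{B} \circ \widetilde{\phi_\mathcal{B}} = p_\mathcal{A}$ and $\omega_{\psi,A}^\mathcal{B} \circ \widetilde{\phi_\mathcal{B}} = j \circ \omega_{\psi,A}^\mathcal{A}$: the first identity exhibits $\widetilde{\phi_\mathcal{B}}$ as a morphism of central extensions over $\textup{Sp}(W)$, the second is the commuting square of the statement with unlabelled vertical arrow $j$, and canonicity is immediate since $j$, $p_\mathcal{A}$ and the fibre product all are.

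It remains to identify the image. Since $p_\mathcal{B} \circ \widetilde{\phi_\mathcal{B}} = p_\mathcal{A}$ is surjective, $p_\mathcal{B}(\textup{Im}\,\widetilde{\phi_\mathcal{B}}) = \textup{Sp}(W)$, while $\textup{Im}\,\widetilde{\phi_\mathcal{B}} \cap \ker p_\mathcal{B} = \widetilde{\phi_\mathcal{B}}(\ker p_\mathcal{A}) = \widetilde{\phi_\mathcal{B}}(\mathcal{A}^\times)$; on $\ker p_\mathcal{A} = \mathcal{A}^\times$ the map $\widetilde{\phi_\mathcal{B}}$ is $\lambda \mapsto \varphi(\lambda)$, because $\omega_{\psi,A}^\mathcal{A}$ identifies this kernel with the scalars and $j$ acts on scalars through $\varphi$. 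As $\mathcal{B} \neq 0$, the ideal $\ker\varphi$ contains no unit of $\mathcal{A}$, so $\varphi$ is injective on $\mathcal{A}^\times$ and identifies it with $(\mathcal{A}/\ker\varphi)^\times$; hence $\textup{Im}\,\widetilde{\phi_\mathcal{B}}$ is a central extension of $\textup{Sp}(W)$ by $(\mathcal{A}/\ker\varphi)^\times$, as asserted. The only genuinely delicate point in the whole argument is the compatibility $\bar\jmath \circ \sigma_\mathcal{A} = \sigma_\mathcal{B}$; everything else is the fibre-product formalism together with bookkeeping of the central subgroups.
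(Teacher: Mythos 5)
The construction of $\widetilde{\phi_\mathcal{B}}$ via the universal property of the fibre product is sound and parallels the paper's argument, which instead defines $\widetilde{\phi_\mathcal{B}}(g,M) = (g,\phi_\mathcal{B}(M))$ directly and verifies membership by showing $\phi_\mathcal{B}(I_{gA,A,\mu,\omega}) = I_{gA,A,\mu^\mathcal{B},\omega}$ with $\mu^\mathcal{B} = \varphi\circ\mu$ invertible (Corollary \ref{intertwining_maps_models_A1_A2_cor}). Your ``key step'' $\bar\jmath\circ\sigma_\mathcal{A}=\sigma_\mathcal{B}$ is exactly the same content, just packaged as a statement about projective representations rather than as an explicit formula; both boil down to the compatibility of the intertwiners under scalar extension, which you correctly invoke.

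The identification of the image, however, contains a genuine error. You deduce from ``$\ker\varphi$ contains no unit of $\mathcal{A}$'' that ``$\varphi$ is injective on $\mathcal{A}^\times$''. That is a non sequitur: the first statement only says $\ker\varphi\neq\mathcal{A}$, while the kernel of the \emph{group} homomorphism $\varphi|_{\mathcal{A}^\times}\colon\mathcal{A}^\times\to\mathcal{B}^\times$ is $\{\lambda\in\mathcal{A}^\times \mid \lambda\in 1+\ker\varphi\}$, which is typically nontrivial (e.g.\ if $\mathrm{char}(\mathcal{B})=2$ then $-1$ maps to $1$; more generally roots of unity in $\mathcal{A}^\times$ can collapse). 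So $\widetilde{\phi_\mathcal{B}}$ restricted to the central $\mathcal{A}^\times$ is in general not injective, and the central fibre of the image is the \emph{quotient} $\mathcal{A}^\times/(\mathcal{A}^\times\cap(1+\ker\varphi))$, not $\mathcal{A}^\times$ itself. This is exactly what the paper computes: it first shows $\ker p_\mathcal{A} = \{(\mathrm{Id}_W,\lambda\,\mathrm{Id})\mid\lambda\in\mathcal{A}^\times\}$ (your argument with $I_{A,A,\mu,0}=\mu(\{0\})\,\mathrm{Id}$ also gives this), then identifies the kernel of $\widetilde{\phi_\mathcal{B}}$ on this subgroup with $\{\lambda\in\mathcal{A}^\times\mid\lambda\equiv 1\bmod\ker\varphi\}$, and reads off the image of the central subgroup as the image of $\mathcal{A}^\times$ in $(\mathcal{A}/\ker\varphi)^\times$. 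Replace your injectivity claim with this kernel computation and the rest of your argument goes through.
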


Moreover there exist canonical identifications between these central extensions as $A$ varies: for any other self-dual subgroup $A'$, Corollary \ref{canonical_identif_central_ext_B_cor} defines a canonical morphism of central extension such that $\omega_{\psi,A}^\mathcal{B}$ and $\omega_{\psi,A'}^\mathcal{B}$ agree, where the term ``agree'' is made precise in the corollary mentioned. So the Weil representation $\omega_\psi^\mathcal{B}$ associated to $\psi$ is well-defined in the sense that the isomorphism class of $\omega_{\psi,A}^\mathcal{B}$ does not depend on $A$.

\subsubsection*{The metaplectic group over $\mathcal{A}$-algebras}

The isomorphism class of $\widetilde{\textup{Sp}}_{\psi,A}^\mathcal{B}(W)$, as a central extension of $\textup{Sp}(W)$ by $\mathcal{B}^\times$, does not depend on the choice of $A$ or $\psi_A$. In addition, the canonical isomorphism of central extensions induced by $V_A^\mathcal{B} \simeq V_{A'}^\mathcal{B}$ is compatible with the fiber product projections. Therefore one can speak of \textit{the} metaplectic group over $\mathcal{B}$ associated to $\psi$ as any element in the previous isomorphism class. Even if these groups may be isomorphic for different $\psi$, there does not necessarily exist an isomorphism compatible with the fiber product construction: in this sense these groups 
\shaun{do} %
depend on $\psi$.

We endow the module $V_A^\mathcal{B}$ with the discrete topology and the group $\textup{GL}_\mathcal{B}(V_A^\mathcal{B})$ with the compact-open one. Then Corollary \ref{met_gp_over_A_over_C_cor} compares the situation over $\mathcal{A}$ with that for the classical metaplectic group. Indeed if we endow $\mathbb{C}$ with a structure of $\mathcal{A}$-algebra, then:

\begin{prop_sans_num} The group $\widetilde{\textup{Sp}}_{\psi,A}^\mathcal{A}(W)$ is an open topological subgroup of $\widetilde{\textup{Sp}}_{\psi,A}^\mathbb{C}(W)$.  \end{prop_sans_num}

Here the natural topology on $\widetilde{\textup{Sp}}_{\psi,A}^\mathbb{C}(W)$ is that as a subgroup of $\textup{Sp}(W) \times \textup{GL}_\mathbb{C}(V_A^\mathbb{C})$. The classical metaplectic group is known to be locally profinite, and so is the metpalectic group over $\mathcal{A}$ because of the proposition. Define now the derived group:
$$\widehat{\textup{Sp}}_{\psi,A}^\mathcal{B}(W) =[\widetilde{\textup{Sp}}_{\psi,A}^\mathcal{B}(W),\widetilde{\textup{Sp}}_{\psi,A}^\mathcal{B}(W)].$$
When $\mathcal{B}=\mathbb{C}$, this derived group is the reduced metaplectic group when $F$ is local non-achimedean, or the symplectic group when $F$ is finite, except in the exceptional case $F=\mathbb{F}_3$ and $\textup{dim}_F(W) =2$. Acccording to Proposition \ref{metaplectic_group_over_A_prop}, one has a canonical isomorphism of central extensions:
$$\widehat{\textup{Sp}}_{\psi,A}^\mathcal{A}(W) \simeq \widehat{\textup{Sp}}_{\psi,A}^\mathbb{C}(W).$$
Proposition \ref{metaplectic_group_over_B_prop} sheds light on the structure of the metaplectic group:

\begin{theorem}  One has the following properties:
\begin{enumerate}[label=\textup{\alph*)}]
\item the group $\widetilde{\textup{Sp}}_{\psi,A}^\mathcal{B}(W)$ is the fiber product in the category of topological groups of the morphisms $\sigma_\mathcal{B}$ and $\textup{\textsc{red}}$, having the subspace topology in $\textup{Sp}(W) \times \textup{GL}_\mathcal{B}(V_A^\mathcal{B})$;
\item the representation $\omega_{\psi,A}^\mathcal{B} : \widetilde{\textup{Sp}}_{\psi,A}^\mathcal{B}(W)  \to \textup{GL}_\mathcal{B}(V_A^\mathcal{B})$ is smooth;
\item the map $\widetilde{\phi_\mathcal{B}}$ of Theorem~\ref{weil_rep_comptib_PhiB_thm_intro} is open and continuous, and $\widetilde{\textup{Sp}}_{\psi,A}^\mathcal{B}(W)$ is locally profinite;
\item \label{isomorphism_thm_met_gp_B_point} considering derived groups, the map $\widetilde{\phi_\mathcal{B}}$ restricts to:
\begin{enumerate} \item[i)] a surjection $\widehat{\textup{Sp}}_{\psi,A}^\mathcal{A}(W) \to \widehat{\textup{Sp}}_{\psi,A}^\mathcal{B}(W)$ with kernel $\{\pm 1 \}$ and image isomorphic to $\textup{Sp}(W)$ if $F$ is local non-archimedean and $\textup{char}(\mathcal{B})=2$;
\item[ii)] an isomorphism $\widehat{\textup{Sp}}_{\psi,A}^\mathcal{A}(W) \simeq \widehat{\textup{Sp}}_{\psi,A}^\mathcal{B}(W)$ otherwise. \end{enumerate} \end{enumerate} \end{theorem}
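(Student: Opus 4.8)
The plan is to treat the four points in turn, (a) and (b) being formal. By Proposition~\ref{weil_rep_def_prop} the underlying group of $\widetilde{\textup{Sp}}_{\psi,A}^\mathcal{B}(W)$ is the set-theoretic fibre product of $\sigma_\mathcal{B}$ and $\textsc{red}$, and (a) is exactly the claim that, endowed with the subspace topology of $\textup{Sp}(W)\times\textup{GL}_\mathcal{B}(V_A^\mathcal{B})$, this set satisfies the universal property of the fibre product of topological groups. This is the general fact that the forgetful functor to sets creates such limits: the two projections are continuous group morphisms, and any topological group with compatible continuous morphisms to $\textup{Sp}(W)$ and to $\textup{GL}_\mathcal{B}(V_A^\mathcal{B})$ maps continuously into the product and factors uniquely through the subspace; one uses here the continuity of $\sigma_\mathcal{B}$ and $\textsc{red}$ for the topologies of Section~\ref{a_bit_of_top_section}. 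For (b), $\omega_{\psi,A}^\mathcal{B}$ is the second projection of this fibre product, hence continuous by (a); and since, for the topologies of Section~\ref{a_bit_of_top_section}, the pointwise stabiliser in $\textup{GL}_\mathcal{B}(V_A^\mathcal{B})$ of a finite subset of $V_A^\mathcal{B}$ is an open subgroup, the stabiliser of any vector of $V_A^\mathcal{B}$ in $\widetilde{\textup{Sp}}_{\psi,A}^\mathcal{B}(W)$ is the $\omega_{\psi,A}^\mathcal{B}$-preimage of an open subgroup, hence open, which is smoothness.

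\textbf{Point (c).} I would start from two observations. By point~\ref{extension_scalar_point} of Theorem~\ref{metaplectic_reps_first_thm}, $V_A^\mathcal{B}=V_A\otimes_\mathcal{A}\mathcal{B}$, and $V_A=\textup{ind}_{A_H}^H(\psi_A)$ is a free $\mathcal{A}$-module, so $V_A^\mathcal{B}$ is a free nonzero $\mathcal{B}$-module; hence a scalar fixing a basis vector equals $1$, and the subspace topologies inherited by the central subgroups $\ker p_\mathcal{B}=\mathcal{B}^\times$ and $\ker p_\mathcal{A}=\mathcal{A}^\times$ are discrete. Moreover $\widetilde{\phi_\mathcal{B}}$ is continuous, since on the $\textup{GL}$-factor it is the base-change morphism $\textup{GL}_\mathcal{A}(V_A)\to\textup{GL}_\mathcal{B}(V_A^\mathcal{B})$, and the preimage of the pointwise stabiliser of $w_1,\dots,w_n\in V_A^\mathcal{B}$ contains the pointwise stabiliser of any finite subset of $V_A$ whose image spans the $w_j$. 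The case $F$ finite being trivial (everything in sight is then discrete), assume $F$ local non archimedean. It is classical that the metaplectic cover of $\textup{Sp}(W)$ splits by a continuous homomorphism over an open compact subgroup; as $\widetilde{\textup{Sp}}_{\psi,A}^\mathcal{A}(W)$ is an open subgroup of $\widetilde{\textup{Sp}}_{\psi,A}^\mathbb{C}(W)$ by Corollary~\ref{met_gp_over_A_over_C_cor}, restricting this splitting to a small enough open subgroup $K$ yields a continuous homomorphic section $s\colon K\to\widetilde{\textup{Sp}}_{\psi,A}^\mathcal{A}(W)$ of $p_\mathcal{A}$, and $\widetilde{\phi_\mathcal{B}}\circ s$ is such a section of $p_\mathcal{B}$. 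Then $p_\mathcal{A}^{-1}(K)\simeq K\times\mathcal{A}^\times$ and $p_\mathcal{B}^{-1}(K)\simeq K\times\mathcal{B}^\times$ as topological groups, products of a profinite group and a discrete one; in particular $\widetilde{\phi_\mathcal{B}}\circ s$ embeds the profinite group $K$ as an open subgroup of $\widetilde{\textup{Sp}}_{\psi,A}^\mathcal{B}(W)$, which is therefore locally profinite, and under these identifications $\widetilde{\phi_\mathcal{B}}$ restricts to $\textup{id}_K\times(\varphi|_{\mathcal{A}^\times})$, which is open onto its open image because $\mathcal{A}^\times$ and $\mathcal{B}^\times$ are discrete. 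Since $\widetilde{\phi_\mathcal{B}}$ is a group morphism, openness at the identity propagates to all of $\widetilde{\textup{Sp}}_{\psi,A}^\mathcal{A}(W)$, which gives (c).

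\textbf{Point (d).} Here I would combine three facts. First, $\ker\widetilde{\phi_\mathcal{B}}=\{a\in\mathcal{A}^\times:\varphi(a)=1\}$, contained in the central subgroup $\mathcal{A}^\times$ of $\widetilde{\textup{Sp}}_{\psi,A}^\mathcal{A}(W)$: an element of the kernel lies over $1\in\textup{Sp}(W)$, hence is a scalar $a\,\mathrm{id}_{V_A}$, and $\varphi(a)\,\mathrm{id}_{V_A^\mathcal{B}}=\mathrm{id}_{V_A^\mathcal{B}}$ forces $\varphi(a)=1$ as $V_A^\mathcal{B}$ is free nonzero. Second, $\widetilde{\phi_\mathcal{B}}$ carries $\widehat{\textup{Sp}}_{\psi,A}^\mathcal{A}(W)$ onto $\widehat{\textup{Sp}}_{\psi,A}^\mathcal{B}(W)$: by Theorem~\ref{weil_rep_comptib_PhiB_thm_intro} the image of $\widetilde{\phi_\mathcal{B}}$ surjects onto $\textup{Sp}(W)$, so $\widetilde{\textup{Sp}}_{\psi,A}^\mathcal{B}(W)=\textup{im}(\widetilde{\phi_\mathcal{B}})\cdot\mathcal{B}^\times$ with $\mathcal{B}^\times$ central, whence the commutator subgroup of $\widetilde{\textup{Sp}}_{\psi,A}^\mathcal{B}(W)$ equals $[\textup{im}(\widetilde{\phi_\mathcal{B}}),\textup{im}(\widetilde{\phi_\mathcal{B}})]=\widetilde{\phi_\mathcal{B}}\big(\widehat{\textup{Sp}}_{\psi,A}^\mathcal{A}(W)\big)$, a morphism sending commutator subgroup onto commutator subgroup. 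Third, by Proposition~\ref{metaplectic_group_over_A_prop} one has $\widehat{\textup{Sp}}_{\psi,A}^\mathcal{A}(W)\simeq\widehat{\textup{Sp}}_{\psi,A}^\mathbb{C}(W)$ compatibly with the maps to $\textup{Sp}(W)$, and the intersection of $\widehat{\textup{Sp}}_{\psi,A}^\mathbb{C}(W)$ with its central copy of $\mathcal{A}^\times$ is $\{\pm 1\}$ when $F$ is local non archimedean, the reduced metaplectic group being a genuine double cover, and trivial when $F$ is finite. Combining, $\ker\big(\widetilde{\phi_\mathcal{B}}|_{\widehat{\textup{Sp}}_{\psi,A}^\mathcal{A}(W)}\big)=\big(\widehat{\textup{Sp}}_{\psi,A}^\mathcal{A}(W)\cap\mathcal{A}^\times\big)\cap\ker(\varphi|_{\mathcal{A}^\times})$. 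For $F$ finite this is trivial, so $\widetilde{\phi_\mathcal{B}}$ restricts to an isomorphism of derived groups, as in case (ii). For $F$ local non archimedean it equals $\{\pm 1\}\cap\ker(\varphi|_{\mathcal{A}^\times})$; since $\varphi(-1)=-1$ in $\mathcal{B}$, this is $\{\pm 1\}$ exactly when $\textup{char}(\mathcal{B})=2$, in which case the image is $\widehat{\textup{Sp}}_{\psi,A}^\mathcal{A}(W)/\{\pm 1\}\simeq\textup{Sp}(W)$ (case (i)), and it is $\{1\}$ otherwise, again giving the isomorphism of case (ii).

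\textbf{Main obstacle.} The only genuinely non-formal ingredient is the one driving (c): the existence of a continuous homomorphic section of the metaplectic cover over an open compact subgroup, which must be imported from the classical theory and transported to $\mathcal{A}$ via Corollary~\ref{met_gp_over_A_over_C_cor} and then to $\mathcal{B}$ via $\widetilde{\phi_\mathcal{B}}$; the rest is bookkeeping about central extensions and base change along $\varphi$. A further delicate point is the identification, in the third fact above, of the central part of $\widehat{\textup{Sp}}_{\psi,A}^\mathbb{C}(W)$, especially around the exceptional pair $F=\mathbb{F}_3$, $\textup{dim}_F(W)=2$: there $\widehat{\textup{Sp}}_{\psi,A}^\mathbb{C}(W)$ still meets the central torus trivially and $F$ is finite, so only case (ii) is invoked and the statement stands unchanged.
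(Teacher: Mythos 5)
Your proof is correct, and points (a), (b) and (d) follow essentially the same lines as the paper: (a)–(b) are the formal facts about topological fibre products, and for (d) the paper likewise deduces surjectivity onto the derived group from $\widetilde{\textup{Sp}}^{\mathcal{B}}_{\psi,A}(W)=\textup{Im}(\widetilde{\phi_\mathcal{B}})\cdot\mathcal{B}^\times$ with $\mathcal{B}^\times$ central, then computes the kernel as $\widehat{\textup{Sp}}^\mathcal{A}_{\psi,A}(W)\cap\ker(\varphi|_{\mathcal{A}^\times})$, reduced to $\{\pm 1\}\cap\ker\varphi$ via the identification of Proposition~\ref{metaplectic_group_over_A_prop}.

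The genuine divergence is in (c). The paper never invokes a continuous homomorphic section of the cover over a compact open subgroup; instead it shows directly that $\textup{Im}(\widetilde{\phi_\mathcal{B}})$ is open by combining smoothness of $\omega_{\psi,A}^\mathcal{B}$ with the observation that the stabiliser of a vector $v\otimes_\mathcal{A}1$ (for $v\in V_A$) is contained in $\textup{Im}(\widetilde{\phi_\mathcal{B}})$, and then realises $\textup{Im}(\widetilde{\phi_\mathcal{B}})$ as the quotient of the already locally profinite group $\widetilde{\textup{Sp}}^\mathcal{A}_{\psi,A}(W)$ by a discrete central kernel. Your approach imports the classical splitting of the metaplectic cover over a small compact open subgroup $K$, pulls it back along Corollary~\ref{met_gp_over_A_over_C_cor} to a section $K\to\widetilde{\textup{Sp}}^\mathcal{A}_{\psi,A}(W)$, and thereby trivialises both extensions over $K$ as $K\times\mathcal{A}^\times$ and $K\times\mathcal{B}^\times$ with discrete central factors, from which openness, continuity, and local profiniteness are all immediate. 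This buys you a cleaner and more explicit local picture of $\widetilde{\phi_\mathcal{B}}$ (literally $\textup{id}_K\times\varphi$), at the cost of an extra external ingredient (the local splitting of the cover), while the paper stays self-contained within the machinery already built. One small imprecision on your side: you assert that $V_A$ is a free $\mathcal{A}$-module to get discreteness of the central scalars; what is actually needed, and what the paper implicitly uses, is only the existence of a function $\chi_{w,L}\in V_A$ taking the value $1$ at some point, which forces $\lambda\chi_{w,L}=\chi_{w,L}\Rightarrow\lambda=1$. This weaker statement suffices and avoids having to justify freeness of the compact induction.
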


Again exclude the exceptional case, which is considered in the separate Remark \ref{exceptional_case_sigmaA_sigmaB_rem}. In Section \ref{reduced_cocycle_over_B_section}, we prove:

\begin{theorem} There exists a section $\varsigma^\mathcal{B} : \textup{Sp}(W) \to \widehat{\textup{Sp}}_{\psi,A}^\mathcal{B}(W)$ compatible with that defined over $\mathcal{A}$ and such that the associated $2$-cocyle $\hat{c}_\mathcal{B}$ has image:
\begin{itemize}
\item $\{ 1 \}$ if $F$ is finite or $\textup{char}(\mathcal{B})=2$;
\item $\{ \pm 1\}$ if $F$ is local non-archimedean and $\textup{char}(\mathcal{B}) \neq 2$.
\end{itemize}
\end{theorem}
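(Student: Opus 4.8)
The plan is to start from the section $\sigma^\mathcal{A} : \textup{Sp}(W) \to \widehat{\textup{Sp}}_{\psi,A}^\mathcal{A}(W)$ whose existence (and the structure of its associated $2$-cocycle) is known over $\mathcal{A}$, in particular the fact that over $\mathcal{A}$ the cocycle takes values in $\{\pm 1\}$ when $F$ is local non-archimedean and in $\{1\}$ when $F$ is finite (excluding the exceptional case, and recalling $\widehat{\textup{Sp}}_{\psi,A}^\mathcal{A}(W)\simeq\widehat{\textup{Sp}}_{\psi,A}^\mathbb{C}(W)$ by the earlier proposition, so that this is just the classical statement about the Rao cocycle / reduced metaplectic group). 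Then I would push this section forward along $\widetilde{\phi_\mathcal{B}}$ and define $\sigma^\mathcal{B} := \widetilde{\phi_\mathcal{B}} \circ \sigma^\mathcal{A}$, using part \ref{isomorphism_thm_met_gp_B_point} of the previous theorem: in case (ii) $\widetilde{\phi_\mathcal{B}}$ restricts to an isomorphism $\widehat{\textup{Sp}}_{\psi,A}^\mathcal{A}(W)\simeq \widehat{\textup{Sp}}_{\psi,A}^\mathcal{B}(W)$, so $\sigma^\mathcal{B}$ is immediately a section and its cocycle $\hat c_\mathcal{B}$ is the image of $\hat c_\mathcal{A}$ under the structure map $\mathcal{A}^\times \to \mathcal{B}^\times$; in case (i), where $F$ is local non-archimedean and $\textup{char}(\mathcal{B})=2$, the map on derived groups is a surjection with kernel $\{\pm1\}$, but $-1=1$ in $\mathcal{B}$ so the composite $\sigma^\mathcal{B}$ still lands in $\widehat{\textup{Sp}}_{\psi,A}^\mathcal{B}(W)$ and descends to a genuine group section, its cocycle being the image of $\hat c_\mathcal{A}\in\{\pm1\}$, hence trivial.

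Concretely I would argue as follows. First, "compatible with that defined over $\mathcal{A}$" should be taken to mean precisely $\widetilde{\phi_\mathcal{B}}\circ\sigma^\mathcal{A} = \sigma^\mathcal{B}$ (as maps $\textup{Sp}(W)\to\widehat{\textup{Sp}}_{\psi,A}^\mathcal{B}(W)$, using that $\widetilde{\phi_\mathcal{B}}$ carries $\widehat{\textup{Sp}}_{\psi,A}^\mathcal{A}(W)$ into $\widehat{\textup{Sp}}_{\psi,A}^\mathcal{B}(W)$ by functoriality of the commutator subgroup). Then one checks that $\sigma^\mathcal{B}$ is a set-theoretic section of $p_\mathcal{B}$ restricted to the derived group: this follows from $p_\mathcal{B}\circ\widetilde{\phi_\mathcal{B}} = p_\mathcal{A}$ (the $\widetilde{\phi_\mathcal{B}}$ is a morphism of central extensions of $\textup{Sp}(W)$, stated in Theorem \ref{weil_rep_comptib_PhiB_thm_intro}) together with $p_\mathcal{A}\circ\sigma^\mathcal{A}=\textup{id}$. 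Next, the cocycle: by definition $\hat c_\mathcal{B}(g,h) = \sigma^\mathcal{B}(g)\sigma^\mathcal{B}(h)\sigma^\mathcal{B}(gh)^{-1}$, and applying $\widetilde{\phi_\mathcal{B}}$ to $\hat c_\mathcal{A}(g,h) = \sigma^\mathcal{A}(g)\sigma^\mathcal{A}(h)\sigma^\mathcal{A}(gh)^{-1}$ shows $\hat c_\mathcal{B} = \widetilde{\phi_\mathcal{B}}|_{\mathcal{A}^\times}\circ \hat c_\mathcal{A} = \varphi\circ\hat c_\mathcal{A}$ where $\varphi:\mathcal{A}\to\mathcal{B}$ is the structure morphism. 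Since $\varphi$ is a ring homomorphism it sends $1\mapsto 1$ and $-1\mapsto -1$; when $\textup{char}(\mathcal{B})=2$ we have $-1=1$ in $\mathcal{B}^\times$, so $\varphi(\{\pm1\})=\{1\}$, giving the first bullet for the local non-archimedean, characteristic $2$ sub-case, and the finite-field case is immediate because $\hat c_\mathcal{A}$ is already trivial there. When $\textup{char}(\mathcal{B})\neq 2$ and $F$ is local non-archimedean, $\varphi$ is injective on $\{\pm1\}$ (as $-1\neq 1$ in any ring of characteristic $\neq 2$) so the image of $\hat c_\mathcal{B}$ is exactly $\{\pm1\}$ — here one must also note that the image is genuinely all of $\{\pm1\}$ and not smaller, which comes from the corresponding fact over $\mathcal{A}$ (equivalently over $\mathbb{C}$, i.e. non-triviality of the reduced metaplectic extension, valid away from the exceptional case).

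The remaining point is the non-triviality claim over $\mathcal{A}$ in the local non-archimedean case: one needs that $\hat c_\mathcal{A}$ is not a coboundary valued in $\{\pm1\}$ that becomes trivial after a change of section, i.e. that the image genuinely equals $\{\pm1\}$ rather than just being contained in it. This is where I would invoke $\widehat{\textup{Sp}}_{\psi,A}^\mathcal{A}(W)\simeq\widehat{\textup{Sp}}_{\psi,A}^\mathbb{C}(W)$ and the classical fact that the reduced metaplectic two-fold cover of a symplectic group over a local non-archimedean field is non-split (Weil, Rao, Perrin), outside the exceptional $F=\mathbb{F}_3,\ \dim_F W=2$ case — but that case is finite anyway, so it does not even arise here. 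Thus over $\mathcal{A}$, no choice of section can make $\hat c_\mathcal{A}$ trivial, so its image is $\{\pm1\}$, and since we then take $\sigma^\mathcal{B}=\widetilde{\phi_\mathcal{B}}\circ\sigma^\mathcal{A}$ with $\widetilde{\phi_\mathcal{B}}$ injective on $\{\pm1\}\subset\mathcal{A}^\times$ (for $\textup{char}(\mathcal{B})\neq2$), $\hat c_\mathcal{B}$ also has image $\{\pm1\}$. I expect the main obstacle — or at least the only non-bookkeeping step — to be precisely pinning down, over $\mathcal{A}$, the value set of $\hat c_\mathcal{A}$ rather than just its ambient containment; everything else is functoriality of commutator subgroups and the universal property of the fiber product already established in the earlier theorems. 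One subtlety worth a line: in the $\textup{char}(\mathcal{B})=2$ local case, $\sigma^\mathcal{B}$ is automatically a group homomorphism (cocycle trivial), consistent with part i) of the previous theorem saying the derived group there is isomorphic to $\textup{Sp}(W)$ itself, so there is nothing to split.
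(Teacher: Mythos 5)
Your proposal is correct and takes essentially the same route as the paper: define $\sigma^\mathcal{B} = \widetilde{\phi_\mathcal{B}} \circ \sigma^\mathcal{A}$, note that the cocycles are related by $\hat{c}^\mathcal{B} = \varphi \circ \hat{c}^\mathcal{A}$, and observe that $\varphi$ collapses $\{\pm 1\}$ to $\{1\}$ exactly when $\mathrm{char}(\mathcal{B}) = 2$. The paper arrives at the statement as an immediate corollary of the proposition establishing that $\sigma^\mathcal{A}$ (built from explicit Haar measures and Weil factors) has cocycle image $\{1\}$ (finite $F$) or $\{\pm 1\}$ (local non-archimedean $F$), which in turn is proved by comparing with $\mathbb{C}$ via $\widetilde{\phi_\mathbb{C}}$ and citing the known complex result; your added paragraph on why the image over $\mathcal{A}$ is genuinely all of $\{\pm 1\}$ (non-splitness of the reduced metaplectic cover) is a correct justification of a point the paper delegates to the reference.
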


\subsubsection*{Families of Weil representations}

The consequence of these results is that one may speak of a universal Weil module $\omega_\psi^\mathcal{A}$ over $\mathcal{A}$ associated to $\psi$: that is (see Proposition~\ref{weil_rep_A_to_B_prop}) any Weil representation $\omega_\psi^\mathcal{B}$ with coefficients in $\mathcal{B}$ arises from the scalar extension of this universal Weil module. Thus, according to the compatibility in Theorem \ref{weil_rep_comptib_PhiB_thm_intro}, the Weil representation $\omega_\psi^\mathcal{A}$ is a family of Weil representations over the residue fields of $\textup{Spec}(\mathcal{A})$.

\subsection{Towards an integral theta correspondence}

In the rest of the introduction, we give some new ideas and speculate in the direction of an integral theta correspondence. As an illustration, we study in detail the case of the type II dual pair $(F^\times,F^\times)$ but it is only this example which is part of the main body of the paper. Thus the text below is a kind of story about the bigger picture to motivate our study and can be omitted if the reader is only interested in the Weil representation itself.

Suppose again $F$ is local non-archimedean. For 
general dual pairs $(H_1,H_2)$, one usually considers the Weil representation with coefficients in a field, along with its biggest $\pi_1$-isotypic quotients for $\pi_1$ running through the irreducible representations of~$H_1$. However, there is no  natural definition of what a good biggest isotypic quotient over a ring should be. But there is another approach with a coarser invariant in terms of the Bernstein centre, giving a bigger representation. In order to lighten notations further, we omit the reference to $\psi$ from now on.

\subsubsection*{Replacing biggest isotypic quotients: a heuristic approach}

Suppose in this paragraph that $\mathcal{B}$ is an algebraically closed field. Let $\mathfrak{z}_\mathcal{B}(H_1)$ be the Bernstein centre of $H_1$. A character of the Bernstein of the centre is a $\mathcal{B}$-algebra morphism $\eta_1 : \mathfrak{z}_\mathcal{B}(H_1) \to \mathcal{B}$. The set of such characters correspond bijectively to the points in $\textup{Specmax}(\mathfrak{z}_\mathcal{B}(H_1))$. Denote by
$\eta_{\pi_1} : \mathfrak{z}_\mathcal{B}(H_1) \to \mathcal{B}$ the character associated to~$\pi_1$. The construction of the biggest $\pi_1$-isotypic quotient factors through the biggest $\eta_{\pi_1}$-isotypic quotient, in the sense that for any $V \in \textup{Rep}_\mathcal{B}(H_1)$, the quotient $V \to V_{\pi_1}$ factors through $V \to V \otimes_{\mathfrak{z}_\mathcal{B}(H_1)} \eta_{\pi_1}$. Denote by $V_{\eta_{\pi_1}}$ the latter representation.  Regardless of the characteristic of $\mathcal{B}$, and similarly to $V_{\pi_1} \in \textup{Rep}_\mathcal{B}(H_1 \times H_2)$ when $V \in \textup{Rep}_\mathcal{B}(H_1 \times H_2)$, one has $V_{\eta_{\pi_1}} \in \textup{Rep}_\mathcal{B}(H_1\times H_2)$.

When the characteristic $\ell$ of $\mathcal{B}$ is banal with respect to $H_1$, that is when $\ell$ does not divide the pro-order $|H_1|$ of $H_1$, the set of characters of $\mathfrak{z}_\mathcal{B}(H_1)$ is in bijection with the set of cuspidal supports in $\textup{Rep}_\mathcal{B}(H_1)$ and  we expect the following to hold for all $\eta_1$ in a Zariski open subset of $\textup{Specmax}(\mathfrak{z}^\mathcal{B}(H_1))$:
$$V_{\eta_1} \simeq \bigoplus_{\pi_1 \in \eta_1} V_{\pi_1}$$
where $\pi_1 \in \eta_1$ means $\eta_{\pi_1}=\eta_1$, that is $\pi_1$ has cuspidal support corresponding to $\eta_1$.

Outside the banal setting, it seems risky to state any precise results. Already some key facts fail: the maximal ideals of $\mathfrak{z}_\mathcal{B}(H_1)$ are no longer in bijection with cuspidal supports. However the biggest $\pi_1$-isotypic quotient $V_{\pi_1}$ always is a quotient of the bigger representation~$V_{\eta_{\pi_1}}$, so this last construction encapsulates more information. In addition, we expect this object to behave in a nicer way for coefficient rings as it keeps track of congruences.

\subsubsection*{Illustration for the type II dual pair $(F^\times,F^\times)$}

The category $\textup{Rep}_\mathcal{B}(F^\times)$ can be decomposed according to the level and we denote by $\textup{Rep}_\mathcal{B}^0(F^\times)$ the level $0$ direct factor category. This category is Morita-equivalent to the category of $\mathfrak{z}^0$-modules, where $\mathfrak{z}^0$ is the commutative ring $\mathcal{B}[F^\times / 1 + \varpi_F \mathcal{O}_F]$. Up to choosing a uniformiser $\varpi_F$ and a primitive $(q-1)$-th root of unity $\zeta$ in $F^\times$, this ring is isomorphic to $\mathcal{B}[X^{\pm 1},Z]/(Z^{q-1}-1)$ by sending $X$ to $\varpi_F$ and $Z$ to $\zeta$. Instead of considering biggest isotypic quotients associated to irreducible representations in $\textup{Rep}_\mathcal{B}(F^\times)$, Section \ref{specialisation_using_the_center_section} considers more general isotypic families of representations using the explicit description of (the center of) $\mathfrak{z}^0$. 

\begin{defi_sans_num} Let $V \in \textup{Rep}_\mathcal{B}(F^\times)$. When $\mathcal{C}$ is a commutative $\mathcal{B}$-algebra and $\eta : \mathfrak{z}^0 \to \mathcal{C}$ is a $\mathcal{B}$-algebra morphism, the repesentation $V_\eta = V \otimes_{\mathfrak{z}^0} \eta \in \textup{Rep}_\mathcal{C}(F^\times)$ may be thought as the ``biggest $\eta$-isotypic quotient of $V$.'' \end{defi_sans_num}

\begin{rem_sans_num} Unlike the situation of the biggest isotypic quotient, $V$ does not necessarily surject onto $V_\eta$ if $\eta$ is not surjective. So in general $V_\eta$ is not a quotient of $V$, but the image of $V$ in $V_\eta$ generates $V_\eta$ as a $\mathcal{C}$-module. \end{rem_sans_num}

When $\mathcal{B}'$ is a $\mathcal{B}$-algebra, denote by $(\textbf{1}_{F^\times}^{\mathcal{B}'},\mathcal{B}') \in \textup{Rep}_\mathcal{B}^0(F^\times)$ the trivial $\mathfrak{z}^0$-module isomorphic to $\mathcal{B}'$. Denote by $(\chi_\mathcal{B},\mathcal{B})$ the character with $\chi_\mathcal{B}(\varpi_F) = q \in \mathcal{B}^\times$ and $\chi_\mathcal{B}|_{\mathcal{O}_F^\times} = 1_\mathcal{B}$. Thus $\chi_\mathcal{B}$ is the inverse of the norm $| \cdot |_F$.

Let $I_{\textbf{\textup{1}}}$ be the ideal in $\mathfrak{z}^0$ corresponding to $(X-1,Z-1)$ in $\mathcal{B}[X^{\pm 1},Z]/(Z^{q-1}-1)$. Denote the quotient map $\eta_{\textbf{\textup{1}}} : \mathfrak{z}^0 \to \mathfrak{z}^0 / I_{\textbf{\textup{1}}}$. Consider the isotypic family $V_{\eta_{\textbf{\textup{1}}}}$ associated to $\eta_{\textbf{\textup{1}}}$ with respect to the action of the first copy of $F^\times$ on $V$. Take the same convention for $I$ corresponding to $(X-q,Z-1)$ with $\eta$ being the quotient map.

\begin{theorem} \label{isotypic_families_GL1_GL1_thm} One has in $\textup{Rep}_\mathcal{B}(F^\times)$ the following isomorphisms:

\begin{enumerate}[label=\textup{\alph*)}]
\item $V_{\eta_\textbf{\textup{1}}} \simeq \textbf{\textup{1}}_{F^\times}^{\mathcal{B}/(q-1)\mathcal{B}} \oplus \textbf{\textup{1}}_{F^\times}^\mathcal{B}$;
\item $V_\eta \simeq \textbf{\textup{1}}_{F^\times}^{\mathcal{B}/(q-1) \mathcal{B}} \oplus \chi_\mathcal{B}.$ \end{enumerate} \end{theorem}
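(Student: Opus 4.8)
The plan is to render $V$, its $\mathfrak{z}^0$-module structure and the two relevant quotients completely explicit. First I would record a $\mathcal{B}$-basis of $V$: for $n\in\mathbb{Z}$ and $\bar b\in\mathbb{F}_q^\times$ let $f_{n,\bar b}$ be the characteristic function of the compact open coset $\varpi_F^n\tilde b(1+\varpi_F\mathcal{O}_F)\subset F^\times$ (with $\tilde b\in\mathcal{O}_F^\times$ lifting $\bar b$), and adjoin the single function $\mathbf 1_{\mathcal{O}_F}$, which carries the datum of the value at $0\in F$. Any level-$0$ element of $C_c^\infty(F,\mathcal{B})$ is a finite $\mathcal{B}$-combination of the $f_{n,\bar b}$ plus a multiple of some $\mathbf 1_{\varpi_F^M\mathcal{O}_F}$, and $\mathbf 1_{\varpi_F^M\mathcal{O}_F}$ differs from $\mathbf 1_{\mathcal{O}_F}$ by finitely many $f_{n,\bar b}$; linear independence is immediate by evaluating a relation at points of prescribed valuation, so this is indeed a basis. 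Since $V$ is bi-$(1+\varpi_F\mathcal{O}_F)$-invariant it is a level-$0$ representation of each copy of $F^\times$, hence a $\mathfrak{z}^0$-module through the first one; with $X\leftrightarrow\varpi_F$ and $Z\leftrightarrow\zeta_{q-1}$ acting by $g\cdot f=f(g^{-1}\,\cdot\,)$ one gets $Z\colon f_{n,\bar b}\mapsto f_{n,\bar\zeta\bar b}$, $Z\mathbf 1_{\mathcal{O}_F}=\mathbf 1_{\mathcal{O}_F}$ and $X\colon f_{n,\bar b}\mapsto f_{n+1,\bar b}$, $X\mathbf 1_{\mathcal{O}_F}=\mathbf 1_{\mathcal{O}_F}-\sum_{\bar b}f_{0,\bar b}$.

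Next I would compute $V_\eta=V\otimes_{\mathfrak{z}^0}\eta=V/\big((X-c)V+(Z-1)V\big)$ in two steps, with $c=1$ for a) and $c=q$ for b) — note $q\in\mathcal{B}^\times$ because $p\in\mathcal{A}^\times$, so imposing $X=c$ is legitimate. Killing $(Z-1)$ first: for each fixed $n$ the span of $\{f_{n,\bar b}\}_{\bar b}$ is the regular module of the cyclic group $\mathbb{F}_q^\times=\langle\bar\zeta\rangle$, whose $(Z-1)$-coinvariants are $\mathcal{B}$-free of rank one via the coordinate sum; hence $V/(Z-1)V$ is $\mathcal{B}$-free on the classes $d_n$ of the $f_{n,\bar b}$ ($n\in\mathbb{Z}$) together with the class $e$ of $\mathbf 1_{\mathcal{O}_F}$, with $Xd_n=d_{n+1}$ and $Xe=e-(q-1)d_0$. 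Killing $(X-c)$ then imposes $d_n=c^nd_0$ and the single extra relation $(1-c)e=(q-1)d_0$. For $c=1$ this reads $(q-1)d_0=0$, so $V_{\eta_{\mathbf 1}}\cong\mathcal{B}e\oplus(\mathcal{B}/(q-1)\mathcal{B})d_0$; for $c=q$, putting $v=d_0$ and $u=d_0+e$ it reads $(q-1)u=0$, so $V_\eta\cong\mathcal{B}v\oplus(\mathcal{B}/(q-1)\mathcal{B})u$. I expect this step to be the heart of the matter: the $(q-1)$-torsion is not an accident but precisely the interaction, through $Xe=e-(q-1)d_0$, between the closed orbit $\{0\}$ (the $\mathbf 1_{\mathcal{O}_F}$-direction) and the $q-1$ residue classes inside the open orbit $F^\times$; the care needed is to check that no further torsion is created and to choose the basis $(v,u)$ that cleanly exhibits the splitting.

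Finally I would determine the $F^\times\times F^\times$-action on the two summands. The first copy acts throughout by the character that $\eta$ pins down — the trivial character for a), and $\chi_\mathcal{B}$ for b) — while the second copy acts by $g\cdot f=f(\,\cdot\,g)$; evaluating this on $d_n$ and $e$ (using $d_n=c^nd_0$ and $(1,\varpi_F)\mathbf 1_{\mathcal{O}_F}=\mathbf 1_{\mathcal{O}_F}+\sum_{\bar b}f_{-1,\bar b}$) shows that on $V_{\eta_{\mathbf 1}}$ it is trivial on both summands, whereas on $V_\eta$ it is trivial on $u$ and scales $v$ by $\varpi_F\mapsto q^{-1}$. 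Matching the summand on which both copies act trivially with $\mathbf 1_{F^\times}$, and the remaining summand with $\chi_\mathcal{B}$ (viewed in $\textup{Rep}_\mathcal{B}(F^\times\times F^\times)$ through the relevant projection), yields the two isomorphisms. One bookkeeping point I would not skip is verifying that the change of basis $(d_0,e)\rightsquigarrow(v,u)$ is compatible with the direct-sum decomposition — i.e.\ that the torsion summand is generated by exactly the claimed vector — since this is what makes the two summands genuine subrepresentations.
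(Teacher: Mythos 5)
Your proof is correct and runs along the same structural backbone as the paper's, namely using the $\mathfrak{z}^0$-module structure of the level-$0$ Weil representation and the decomposition of $V$ into the part supported on $F^\times$ (which is free of rank one over $\mathfrak{z}^0$) and the one-dimensional quotient given by evaluation at $0$. Where the paper stays abstract — it keeps the short exact sequence $0\to V_0\to V\to\mathbf 1\to 0$, tensors with $\eta$ using right-exactness, and asserts the intersection identity $((X-1)V+(Z-1)V)\cap V_0 = (X-1)V_0+(Z-1)V_0+(q-1)V_0$ to pin down the kernel — you instead write out an explicit $\mathcal{B}$-basis $\{f_{n,\bar b}\}\cup\{\mathbf 1_{\mathcal{O}_F}\}$ of $V$, compute the $\mathfrak{z}^0$-action literally, and quotient by $(Z-1)$ then $(X-c)$ by hand. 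The payoff of your route is that the presentation of $V_\eta$ by generators $d_0,e$ and the single relation $(1-c)e=(q-1)d_0$ falls out transparently, so the intersection claim the paper states without derivation is verified on the nose; the payoff of the paper's route is brevity and portability (the same homological skeleton works for other ideals of $\mathfrak{z}^0$, cf.\ the ``More general families'' remark). One small point worth flagging: the paper's computation $(X-1)\beta=(q-1)\alpha$ is sign-sensitive to the choice between the two conventions $f\mapsto f(a\cdot)$ and $f\mapsto f(a^{-1}\cdot)$ appearing in the text; with the intro's convention (the one you use) the torsion relation comes out as $(q-1)(\alpha+\beta)=0$, i.e.\ your generator $u=d_0+e$, rather than $\alpha-\beta$. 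The resulting direct sum decomposition is of course the same, and your explicit check that both summands are stable under both copies of $F^\times$ — including that $\chi_{\mathcal{B}}$ collapses to the trivial character on $(q-1)$-torsion because $q\equiv 1\pmod{q-1}$ — is exactly the bookkeeping the paper leaves to the reader.
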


The subrepresentation $\textbf{\textup{1}}_{F^\times}^{\mathcal{B}/(q-1) \mathcal{B}}$ is in a certain sense the ``defect'' in the theta correspondence. This is a pure $(q-1)$-torsion submodule, whereas the other part is a free $\mathcal{B}$-module of rank $1$. When $\mathcal{B}$ is a field, this defect vanishes if and only if the characteristic $\ell$ of $\mathcal{B}$ does not divide $q-1$, that is $\ell$ is banal with respect to $F^\times$.

\subsubsection*{Further example}

Using this interpretation in terms of 
characters of the Bernstein centre seems to be more suitable when $\mathcal{B}$ is a ring. Indeed recall the situation in \cite[Sec. 5.3]{trias_theta1} where $F$ has odd residual characteristic and $(H_1,H_2)$ is a type I dual pair that is split in the metaplectic group. Let $\ell$ be a prime that does not divide the pro-order of $H_1$ and endow $\mathcal{B}=W(\overline{\mathbb{F}_\ell})$ with an $\mathcal{A}$-algebra stucture. Let $K$ be the fraction field of $\mathcal{B}$. For any absolutely irreducible cuspidal $\Pi_1 \in \textup{Rep}_K(H_1)$, one has the equality $V_{\Pi_1} = V_{\eta_{\Pi_1}}$ for $V \in \textup{Rep}_K(H_1)$.

The reduction modulo $\ell$ of $\Pi_1$ is obtained by choosing a sable lattice $L_{\Pi_1}$ in $\Pi_1$. The reduction modulo $\ell$ of this lattice is an irreducible representation $\pi_1$ whose isomorphism class does not depend on the choice of $L_{\Pi_1}$. We refer to \cite[Sec. 5.3]{trias_theta1} for more details, but what is important here is that similarly to $\Pi_1$, we have $V_{\pi_1} = V_{\eta_{\pi_1}}$ for $V \in \textup{Rep}_{\overline{\mathbb{F}_\ell}}(H_1)$. Actually this comes along with some compatibilities to scalar extension. Indeed there exists a character $\eta_1 : \mathfrak{z}_{W(\overline{\mathbb{F}_\ell})}(H_1) \to W(\overline{\mathbb{F}_\ell})$ of the integral Bernstein centre such that $\eta_1 \otimes_{W(\overline{\mathbb{F}_\ell})} \overline{\mathbb{F}_\ell} = \eta_{\pi_1}$ and $\eta_1 \otimes_{W(\overline{\mathbb{F}_\ell})} K = \eta_{\Pi_1}$. This yields, for any $V \in \textup{Rep}_{W(\overline{\mathbb{F}_\ell})}(H_1 \times H_2)$, the following canonical morphisms in $\textup{Rep}_{W(\overline{\mathbb{F}_\ell})}(H_1 \times H_2)$:

$$\xymatrix{
V_{\eta_1} \ar@{->>}[d] \ar[r] &  V_{\eta_1} \otimes_{W(\overline{\mathbb{F}_\ell})} K = (V \otimes_{W(\overline{\mathbb{F}_\ell})} K)_{\eta_{\Pi_1}} \\
V_{\eta_1}  \otimes_{W(\overline{\mathbb{F}_\ell})} \overline{\mathbb{F}_\ell} = (V \otimes_{W(\overline{\mathbb{F}_\ell})} \overline{\mathbb{F}_\ell})_{\eta_{\pi_1}} &  
}.$$
When $V=\omega$ is the Weil representation with coefficients in $W(\overline{\mathbb{F}_\ell})$, the Weil representations with coefficients in the residue fields $\overline{\mathbb{F}_\ell}$ and $K$ of $W(\overline{\mathbb{F}_\ell})$ are $\bar{\omega} = \omega \otimes_{W(\overline{\mathbb{F}_\ell})} \overline{\mathbb{F}_\ell}$ and $\Omega = \omega \otimes_{W(\overline{\mathbb{F}_\ell})} K$ respectively. The biggest isotypic quotients are $\Omega_{\eta_{\Pi_1}} \simeq \Pi_1 \otimes_K \Theta (\Pi_1)$ and $\bar{\omega}_{\eta_{\pi_1}} \simeq \pi_1 \otimes_{\overline{\mathbb{F}_\ell}} \Theta(\pi_1)$, where $\Theta(\Pi_1) \in \textup{Rep}_K(H_2)$ and $\Theta(\pi_1) \in \textup{Rep}_{\overline{\mathbb{F}_\ell}}(H_2)$. So $\omega_{\eta_1}$  is a good family object because its generic fiber is $\Pi_1 \otimes_K \Theta(\Pi_1)$ and its special fiber is $\pi_1 \otimes_{\overline{\mathbb{F}_\ell}} \Theta(\pi_1)$. In addition $\Theta(\Pi_1)$ is irreducible, when it is non-zero and $\omega_{\eta_1}$ is a $W(\overline{\mathbb{F}_\ell})[H_1 \times H_2]$-lattice in $\Pi_1 \otimes_K \Theta(\Pi_1)$. Furthermore, when $\ell$ is banal with respect to $H_2$ and $\Theta(\Pi_1)$ is cuspidal, the representation $\Theta(\pi_1)$ is the reduction modulo $\ell$ of $\Theta(\Pi_1)$ and is therefore irreducible \cite[Th. 5.17]{trias_theta1}. To relate $\Theta(\Pi_1)$ and $\Theta(\pi_1)$ in general, one needs to explicitly know which lattice in $\Pi_1 \otimes_K \Theta(\Pi_1)$ is $\omega_{\eta_1}$.

\subsubsection*{First expectations}

Of course in the most general situation, \textit{i.e.} when the coefficient ring $\mathcal{B}$ is $\mathbb{Z}[\frac{1}{p}]$ (or $\mathcal{A}$), exhibiting blocks, as well as their centres, is a daydream. However, one can play with:
\begin{itemize}
\item ``simpler'' rings $\mathcal{B}$ (fields, local rings, banal characteristic, etc.);
\item special classes of representations (cuspidals, level 0, etc.);
\item easier groups in the dual pair (small dimension, general linear, etc.).
\end{itemize}
As recalled, this has been achieved in \cite[Sec. 5.3]{trias_theta1} for type I dual pairs $(H_1,H_2)$ over the local ring $W(\overline{\mathbb{F}_\ell})$ when $\ell$ is banal with respect to $H_1$, looking at the block defined by a (super)cuspidal representation. In Section \ref{features_GL1_GL1_section}, we consider the (very simple) pair $(F^\times,F^\times)$, especially for level $0$ representations. For bigger type II dual pairs $(\textup{GL}_n(F),\textup{GL}_m(F))$ and coefficients rings being made of Witt vectors, the work \cite{helm_bernstein_center} seems to be the cornerstone to tackle the problem. Based on calculations in small dimensions, we make the following two conjectures.

\paragraph{Torsion principle.} When the pro-order of $H_1$, or that of~$H_2$, is
not invertible in $\mathcal{B}$, we expect the failure of the theta correspondence to appear as some $|H_1|_f |H_2|_f$-torsion submodule in the family object, where~$|H_i|_f$ denotes the prime-to-$p$ part of the pro-order of~$H_i$. Thanks to Theorem \ref{isotypic_families_GL1_GL1_thm}, this principle is made a bit more precise when $(H_1,H_2)=(F^\times,F^\times)$.

\paragraph{Bijection principle for characters of the Bernstein centre.} Another problem is the following. When $\eta_1 : \mathfrak{z}_\mathcal{B}(H_1) \to \mathcal{B}$ is a character, are there any nice properties of $(\omega^\mathcal{B})_{\eta_1}$, where $\omega^\mathcal{B}$ is the Weil representation over $\mathcal{B}$? For instance, it seems that the action of $\mathfrak{z}_\mathcal{B}(H_2)$ can also be described in terms of a character of $\mathfrak{z}_\mathcal{B}(H_2)$. Indeed one expects that there exists a character $\eta_2 : \mathfrak{z}_\mathcal{B}(H_2) \to \mathcal{B}$ such that $((\omega^\mathcal{B})_{\eta_1})_{\eta_2} = (\omega^\mathcal{B})_{\eta_1}$. Denoting by $\eta_1 \otimes_\mathcal{B} \eta_2$ the natural character $\mathfrak{z}_\mathcal{B}(H_1) \otimes_\mathcal{B} \mathfrak{z}_\mathcal{B}(H_2) \to \mathcal{B}$, we expect even more: $(\omega^\mathcal{B})_{\eta_1} = (\omega^\mathcal{B})_{\eta_2} = (\omega^\mathcal{B})_{\eta_1 \otimes_\mathcal{B} \eta_2}$. Writing $\eta_2 = \theta(\eta_1)$, one could then speak of a theta correspondence in terms of characters of the respective Bernstein centres because $\theta$ would induce a bijection:
$$\{ \eta_1 : \mathfrak{z}_\mathcal{B}(H_1) \to \mathcal{B} \ | \ (\omega^\mathcal{B})_{\eta_1} \neq 0 \} \overset{\theta}{\simeq} \{ \eta_2 : \mathfrak{z}_\mathcal{B}(H_2) \to \mathcal{B} \ | \ (\omega^\mathcal{B})_{\eta_2} \neq 0 \}.$$

\noindent \textbf{Acknowledgements:} I would like to thank Shaun Stevens for his useful comments, as well as Gil Moss for fruitful discussions.

\tableofcontents

\section{Preliminaries}

\subsection{Notations} \label{notations_section}

All along the paper $F$ will be a field of characteristic not 2, which is either finite or local non archimedean. The residual characteristic and cardinality of $F$ are denoted as usual $p$ and $q$. To turn $F$ into a topological field one considers the usual locally profinite topology. One of the many equivalent formulations of the latter is ``locally compact and totally disconnected''.

\paragraph{$\mathcal{K}$ and $\mathcal{A}$.} Let $\mathcal{K}$ be the field defined in the following two cases:
\begin{itemize}[$\bullet$]
\item $\mathcal{K}$ is the cyclotomic extension of $\mathbb{Q}$ containing the $p^{\textup{th}}$ roots of unity, when the characateristic of $F$ is positive;
\item $\mathcal{K}$ is the algebraic extension of $\mathbb{Q}$ containing all the $p$ power roots of unity, when the characteristic of $F$ is zero.
\end{itemize}
One can write $\mathcal{K} = \mathbb{Q}(\zeta_p)$ by fixing a generator $\zeta_p$ in the first case; in the second however, no generator exists, though the notation $\mathcal{K} = \mathbb{Q}(\zeta_{p^\infty})$ is commonly used. Based on classical results for cyclotomic extensions, the integral closure $\mathcal{O}_\mathcal{K}$ of $\mathbb{Z}$ in $\mathcal{K}$ is, in the first case $\mathbb{Z}[\zeta_p]$, and in the second $\mathbb{Z}[\zeta_{p^\infty}]$. From now on, let $\mathcal{A}$ be the subring of $\mathcal{K}$ obtained from the ring of integers $\mathcal{O}_\mathcal{K}$ by inverting $p$, that is:
$$\mathcal{A} = \mathcal{O}_\mathcal{K}\left[ \frac{1}{p} \right].$$

\paragraph{$\mathcal{A}$-algebras.} By convention, the term $\mathcal{A}$-algebra will refer to commutative rings $\mathcal{B}$ endowed with an $\mathcal{A}$-algebra structure, that is, a ring morphism $\varphi : \mathcal{A} \to \mathcal{B}$. In order to avoid confusion, those $\mathcal{B}$ considered always are unitary rings and $\varphi$ maps the neutral multiplicative element of $\mathcal{A}$ to that of $\mathcal{B}$. Denote $\textup{char}(\mathcal{B})$ the characteristic of $\mathcal{B}$, that is the natural number such that $\{ k \in \mathbb{Z} \ | \ \varphi(k)=0\} = \textup{char}(\mathcal{B}) \mathbb{Z}$. The ring morphism $\varphi$ induces a group morphism $\mathcal{A}^\times \to \mathcal{B}^\times$ between the group of units of $\mathcal{A}$ and that of $\mathcal{B}$. Denote $\mu^p(\mathcal{B}) = \{ \zeta \in \mathcal{B}^\times \ | \ \exists k \in \mathbb{Z}, \zeta^{p^k}=1 \}$ for the group of elements in $\mathcal{B}^\times$ having order a power of $p$.

\paragraph{Character $\psi^\mathcal{B}$.} Let $\mathcal{B}$ be an $\mathcal{A}$-algebra.  Then $\varphi$ restricts injectively to the group of roots in $\mathcal{A}^\times$ having order a power of $p$, that is the group morphism $\varphi : \mu^p(\mathcal{A}) \to \mu^p(\mathcal{B})$ is injective. Indeed, given two distinct roots of unity $\zeta$ and $\zeta'$ in $\mu^p(\mathcal{A})$, their difference $\zeta - \zeta'$ is in $\mathcal{A}^\times$ because $p \in \mathcal{A}^\times$, so they define two distinct elements in $\varphi(\mathcal{A}) = \mathcal{A} / \textup{Ker}(\varphi)$. Therefore one can build, out of any non-trivial smooth character $\psi : F \to \mathcal{A}^\times$, a character $\varphi \circ \psi : F \to \mathcal{B}^\times$ which is still non-trivial and smooth. In order to keep track of the ring considered, one uses a superscript to refer to the $\mathcal{A}$-algbera at stake. From now on, fix such a non-trivial smooth $\psi^\mathcal{A} : F \to \mathcal{A}^\times$ and set:
$$\psi^\mathcal{B}=\varphi \circ \psi^\mathcal{A}.$$

\paragraph{Smooth representations.} Let $G$ be a locally profinite group. Let $R$ be a commutative unitary ring. An $R[G]$-module $V$ is said to be smooth if for all $v \in V$, the stabiliser $G_v$ of $v$ is open in $G$. One denotes $\textup{Rep}_R(G)$ the category of smooth $R[G]$-modules. For any closed subgroup $H$ in $G$, the induction functor $\textup{Ind}_H^G$ associates to any $(\sigma,W) \in \textup{Rep}_R(H)$, the representation $\textup{Ind}_H^G(W) \in \textup{Rep}_R(G)$ of locally constant functions on $G$ taking values in $W$ and satisfying $f(h g) = \sigma(h) \cdot f(g)$ for all $g \in G$ and $h \in H$. The compact induction $\textup{ind}_H^G$ is the subfunctor of $\textup{Ind}_H^G$ made of functions compactly supported modulo $H$, that is the subspace of functions $f \in \textup{Ind}_H^G(W)$ such that the image of $\textup{supp}(f)$ in $H \backslash G$ is a compact set. A representation $V \in \textup{Rep}_R(G)$ is said to be admissible if for all compact open subgroups $K$ in $G$, the set of $K$-invariants $V^K = \{ v \in V \ | \ g \cdot v = v \}$ is finitely generated as an $R$-module.

\paragraph{Haar measures.} Let $G$ be a locally profinite group. In the following, we use the notations of \cite[I.1 \& I.2]{vig_book}. The pro-order $|G|$ of $G$ is the least common multiple, in the sense of supernatural integers, of the orders of its open compact subgroups. To be more explicit, $|G|$ is a function $\mathcal{P} \to \mathbb{N} \cup {\infty}$ on the set of prime numbers $\mathcal{P}$. This decomposes in an obvious way into two parts having disjoint supports, namely the finite part $|G|_f$ and the infinite one $|G|_\infty$. The only situation occuring in the present work is $|G| = |G|_f \times |G|_\infty$ with $|G|_\infty \in \{ 1, p^\infty \}$, according to $G$ being either a finite group or an infinite $p$-adic group; in the latter case, $|G|_f$ is prime-to-$p$. Let $R$ be a commutative unitary ring. As long as all the primes in $|G|_\infty$ are invertible in $R$, there exists a Haar measure on $G$ with values in $R$, that is a non-zero left $G$-equivariant morphism $C_c^\infty(G,R) \to 1_G^R$ where $C_c^\infty(G,R)$ is the space of locally constant and compactly supported functions in $G$ with values in $R$, and $(1_G^R,R)$ is the trivial representation. A normalised Haar measure on $G$ is a Haar measure taking the value $1$ on a compact open subgroup of $G$. In particular such a compact open subgroup must be of invertible pro-order in $R$. Reciprocally, any normalised Haar measure arises as a Haar measure having value $1$ on a compact open subgroup of invertible pro-order in $R$.

\paragraph{The space $W$.} Let $(W,\langle,\rangle)$ be a symplectic vector space of finite dimension over $F$. Its isometry group is composed of the $F$-linear invertible endomorphisms preserving the form $\langle,\rangle$ and is classically denoted $\textup{Sp}(W)$. A lagrangian in $W$ is a maximal totally isotropic subspace. Denote the dimension of $W$ by $n=2m$, then $X$ is a lagrangian if and only if it is a vector subspace which is totally isotropic (\textit{i.e.} $\forall x, x' \in X, \langle x,x' \rangle=0$) of dimension $m$. A lattice $L$ in $W$ is a free $\mathcal{O}_F$-module of rank $n$. The locally profinite topology on the field $F$ induces a locally profinite topology on the finite dimensional vector space $W$. As a result, a lattice in $W$ is a compact open set. Furthermore the subspace topology induced from that of $\textup{End}_F(W)$ on the symplectic group $\textup{Sp}(W)$ is the locally profinite one as well.

\section{Metaplectic representations over $\mathcal{A}$-algebras}

The Heisenberg group $H$ is the set $W \times F$ endowed with the product topology and the composition law:
$$(w,t) \cdot (w',t') = (w+w',t+t'+\frac{1}{2} \langle w, w' \rangle)$$
for $(w,t)$ and $(w',t')$ in $H = W \times F$.

Let $\mathcal{B}$ be an $\mathcal{A}$-algebra with structure morphism $\varphi$. Let $\psi^\mathcal{A} : F \to \mathcal{A}^\times$ be a non-trivial smooth character. As already mentioned in Section \ref{notations_section}, this defines, by composing $\psi$ and $\varphi$, a character $\psi^\mathcal{B} : F \to \mathcal{B}^\times$ which is smooth and non-trivial.

\subsection{A lemma for representations over rings}

Let $G$ be a group and $\mathcal{R}$ a commutative ring. For every prime ideal $\mathcal{P}$ in $\textup{Spec}(\mathcal{R})$, one denotes $k(\mathcal{P})$ the fraction field of $\mathcal{R}(\mathcal{P}) = \mathcal{R} / \mathcal{P}$. Both $k(\mathcal{P})$ and $\mathcal{R}(\mathcal{P})$ are endowed with an obvious structure of $\mathcal{R}$-algebras. For any $\mathcal{R}[G]$-module $V$, the tensor product $V \otimes_\mathcal{R} k(\mathcal{P})$ is a $k(\mathcal{P})[G]$-module in the obvious way. Of course, the latter is smooth if the former is.

\begin{defi} An $\mathcal{R}[G]$-module $V$ is said to be irreducible at $\mathcal{P} \in \textup{Spec}(\mathcal{R})$ if the representation $V \otimes_\mathcal{R} k(\mathcal{P}) \in \textup{Rep}_{k(\mathcal{P})}(G)$ is irreducible. By extension, $V$ is everywhere irreducible if it is irreducible at any point of $\textup{Spec}(\mathcal{R})$. \end{defi}

There exists a simple sufficient condition to be everywhere irreducible:

\begin{lem} \label{everywhere_irreducible_lemma} Let $V$ be an $\mathcal{R}[G]$-module and consider the map $I \mapsto IV$ that maps an ideal $I$ of $\mathcal{R}$ to the sub-$\mathcal{R}[G]$-module $IV$ of $V$. If the previous map defines a bijection between ideals of $\mathcal{R}$ and sub-$\mathcal{R}|G]$-modules of $V$, then $V$ is everywhere irreducible. \end{lem}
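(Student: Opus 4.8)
The plan is to fix a prime $\mathcal{P} \in \textup{Spec}(\mathcal{R})$ and prove that $V \otimes_\mathcal{R} k(\mathcal{P})$ is irreducible by pushing the hypothesis on the submodule lattice of $V$ down to $V \otimes_\mathcal{R} k(\mathcal{P})$. First I would reduce to the case where $\mathcal{R}$ is a domain. Set $\overline{\mathcal{R}} = \mathcal{R}/\mathcal{P}$ and $\overline{V} = V/\mathcal{P}V$; then $V \otimes_\mathcal{R} k(\mathcal{P}) = \overline{V} \otimes_{\overline{\mathcal{R}}} k(\mathcal{P})$, and the sub-$\mathcal{R}[G]$-modules of $\overline{V}$ (which are the same as its sub-$\overline{\mathcal{R}}[G]$-modules, since $\mathcal{P}$ acts by zero) are precisely the submodules of $V$ containing $\mathcal{P}V$. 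The crucial observation is that $IV \supseteq \mathcal{P}V$ forces $\mathcal{P} \subseteq I$: indeed $(I + \mathcal{P})V = IV + \mathcal{P}V = IV$, so $I + \mathcal{P} = I$ by injectivity of $I \mapsto IV$. Hence $\overline{I} \mapsto \overline{I}\,\overline{V}$ is again a bijection between the ideals of $\overline{\mathcal{R}}$ and the sub-$\overline{\mathcal{R}}[G]$-modules of $\overline{V}$, and we may replace $(\mathcal{R}, V, \mathcal{P})$ by $(\overline{\mathcal{R}}, \overline{V}, 0)$. So assume from now on that $\mathcal{R}$ is a domain with fraction field $k = k(\mathcal{P})$, and let us prove that $V_k := V \otimes_\mathcal{R} k$ is irreducible.

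Next I would show that the hypothesis automatically forces $V$ to be torsion-free over $\mathcal{R}$. Let $0 \neq v \in V$ and write the cyclic submodule $\mathcal{R}[G]v$ as $IV$ for a (unique) ideal $I$. If $rv = 0$ for some $0 \neq r \in \mathcal{R}$, then $r \cdot \mathcal{R}[G]v = 0$, that is $(r)I \cdot V = 0 = 0 \cdot V$, so $(r)I = 0$ by injectivity; since $\mathcal{R}$ is a domain and $r \neq 0$ this gives $I = 0$, hence $v = 0$, a contradiction. Moreover $V \neq 0$, because the ideals $0$ and $\mathcal{R}$ of the nonzero ring $\mathcal{R}$ are distinct and map to distinct submodules. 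Torsion-freeness gives a canonical embedding $\iota : V \hookrightarrow V_k$, and in particular $V_k \neq 0$.

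Finally, let $N$ be a nonzero sub-$k[G]$-module of $V_k$ and put $M = \iota^{-1}(N)$, a sub-$\mathcal{R}[G]$-module of $V$. Clearing denominators, any $\xi \in N$ satisfies $s\xi = \iota(v)$ for some $v \in V$ and $0 \neq s \in \mathcal{R}$, and since $s$ acts invertibly on the $k$-vector space $V_k$ we get $\iota(v) = s\xi \in N$, so $v \in M$ and $\xi = s^{-1}\iota(v)$; taking $\xi \neq 0$ shows $M \neq 0$, and in general this shows $N$ is the $k$-span of $\iota(M)$. Now write $M = IV$; since $M \neq 0 = 0 \cdot V$ we have $I \neq 0$, so $I$ contains a unit of $k$, and therefore the $k$-span of $\iota(IV)$ already contains $\iota(V)$, hence is all of $V_k$. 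Thus $N = V_k$, so $V_k$ has no proper nonzero sub-$k[G]$-module and is irreducible, which is what we wanted.

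I expect the only real obstacle to be the bookkeeping in the first paragraph: verifying carefully that the bijective correspondence $I \mapsto IV$ is inherited by the quotient $\mathcal{R} \twoheadrightarrow \mathcal{R}/\mathcal{P}$ (so that, after the domain reduction, one is effectively localizing at the generic point). Everything after that is driven by the torsion-freeness trick, and the hypothesis is used only through the injectivity of $I \mapsto IV$ together with two instances of its surjectivity — for a cyclic submodule, and for $M = \iota^{-1}(N)$.
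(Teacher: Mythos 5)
Your proof is correct, and it follows essentially the same route as the paper: both take the preimage $M$ (resp.\ $W'$) in $V$ of a nonzero sub-$k(\mathcal{P})[G]$-module of $V\otimes_\mathcal{R} k(\mathcal{P})$, write it as $IV$ using surjectivity, and conclude by showing $I$ is ``large enough'' to span everything after localisation. The only organisational difference is that you first quotient by $\mathcal{P}$ to reduce to the case where $\mathcal{R}$ is a domain (which costs you the small verification that the bijection hypothesis descends to $\overline{\mathcal{R}}$), whereas the paper stays over $\mathcal{R}$ and tracks the equivalent condition $I\not\subseteq\mathcal{P}$ directly; the underlying mechanism --- injectivity of $I\mapsto IV$ forcing torsion-freeness, and primality of $\mathcal{P}$ --- is the same in both.
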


\begin{proof} Using the bijection, one has $\mathcal{P} V \subsetneq V$ for any prime (proper) ideal $\mathcal{P}$, so the module $V \otimes_\mathcal{R} \mathcal{R}(\mathcal{P}) = V / \mathcal{P} V$ is non-zero. It is even $\mathcal{R}(\mathcal{P})$-torsion free because, if $a v \in \mathcal{P} V$ for $a \in R$ and $v \in V$, then $a I_v \subset \mathcal{P}$ where $I_v V = \mathcal{R}[G] \cdot v$. In particular $V \otimes_\mathcal{R} \mathcal{R}(\mathcal{P})$ embeds in $V \otimes_\mathcal{R} k(\mathcal{P})$ by a localisation argument, so the latter representation is non-zero.

In order to prove that $V \otimes_\mathcal{R} k(\mathcal{P})$ is irreducible, let $W$ be a non-zero subrepresentation of $V \otimes_\mathcal{R} k(\mathcal{P})$ and define $W' = \{ v \in V \ | \ v \otimes_\mathcal{R} 1 \in W \}$. As a first elementary claim, this $W'$ is a non-zero sub-$\mathcal{R}[G]$-module of $V$. In addition the bijection hypothesis yields the existence of an ideal $I$ of $\mathcal{R}$ such that $W'=I V$. Observe furthermore thanks to the bijection that $I \subset \mathcal{P}$ if and only if $IV \subset \mathcal{P} V$. As a consequence, the image of $IV$ in $V \otimes_\mathcal{R} k(\mathcal{P})$ generates $V \otimes_\mathcal{R} k(\mathcal{P})$ as a $k(\mathcal{P})$-vector space if and only if $I$ is not contained in $\mathcal{P}$. Of course the image of $W'$ in $V \otimes_\mathcal{R} k(\mathcal{P})$ is non-zero because $W$ is not, so $I$ is not contained in $\mathcal{P}$ \textit{i.e.} the image of $W'$ generates $V \otimes_\mathcal{R} k(\mathcal{P})$. Therefore $W = V \otimes_\mathcal{R} k(\mathcal{P})$.  \end{proof}

\subsection{Models $V_A^\mathcal{B}$ associated to self-dual subgroups} \label{models_associated_to_self_dual_subgp_section}

When $A$ is a closed subgroup of $W$, define:
$$A^\perp = \{ w \in W \ | \ \psi^\mathcal{A}(\langle w, A \rangle) = 1 \}.$$
In this definition, whether one uses $\psi^\mathcal{A}$ or $\psi^\mathcal{B}$ matters not. Now, the closed subgroup $A$ of $W$ is said to be self-dual if $A^\perp = A$. Lagrangians and self-dual lattices provide examples of such subgroups, so there always exist self-dual subgroups in $W$.

\begin{lem} \label{extension_of_psi_B_lem} Let $A$ be a self-dual subgroup of $W$. Then there exists a character $\psi^\mathcal{A}_A$ which extends $\psi^\mathcal{A}$ to the subgroup $A_H = A \times F$ of the Heisenberg group $H$. Furthermore, $\psi^\mathcal{B}_A = \varphi \circ \psi^\mathcal{A}_A$ provides the same kind of extension, that is, $\psi^\mathcal{B}_A$ extends $\psi^\mathcal{B}$ to $A_H$. \end{lem}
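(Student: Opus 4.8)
The plan is to build $\psi_A^{\mathcal{A}}$ explicitly and then obtain $\psi_A^{\mathcal{B}}$ by composing with $\varphi$. First I would reduce to a cocycle problem: a character $\psi_A^{\mathcal{A}}\colon A_H\to\mathcal{A}^\times$ restricting to $\psi^{\mathcal{A}}$ on the central subgroup $\{0\}\times F$ is exactly the same datum as a locally constant function $\mu\colon A\to\mathcal{A}^\times$ satisfying
$$\mu(a+a')\;\psi^{\mathcal{A}}\!\left(\tfrac12\langle a,a'\rangle\right)=\mu(a)\,\mu(a')\qquad(a,a'\in A),$$
via $\psi_A^{\mathcal{A}}(a,t)=\mu(a)\,\psi^{\mathcal{A}}(t)$ together with expansion of the law of $H$ (the normalisation $\mu(0)=1$ being then automatic). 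Setting $\beta(a,a'):=\psi^{\mathcal{A}}(\tfrac12\langle a,a'\rangle)$, which is locally constant by smoothness of $\psi^{\mathcal{A}}$, what remains is to split the symmetric $2$-cocycle $\beta$ on the abelian group $A$ by such a $\mu$.

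Next I would feed in self-duality. From $A=A^\perp$ one gets $\psi^{\mathcal{A}}(\langle a,a'\rangle)=1$ for all $a,a'\in A$, hence $\beta(a,a')^2=1$, $\beta(a',a)=\beta(a,a')$ and $\beta(a,a)=1$; so $\beta$ is a symmetric bimultiplicative map $A\times A\to\{\pm1\}$. If $2\in\mathcal{O}_F^\times$ — in particular when the residual characteristic $p$ is odd — then $\tfrac12$ stabilises $\ker\psi^{\mathcal{A}}$ and $\beta\equiv1$, so $\mu\equiv1$ suffices; likewise if $A$ is a lagrangian, since then $\langle A,A\rangle=0$. The one substantive case is $p=2$ (so $\textup{char}(F)=0$). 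There I would fix an isotropic decomposition $A=A_1\oplus A_2$ into two closed isotropic subgroups, which exists for any self-dual subgroup (one reduces to a self-dual lattice and splits a symplectic basis into two halves), and set $\mu(a_1+a_2):=\psi^{\mathcal{A}}(\tfrac12\langle a_1,a_2\rangle)$. The cocycle identity for $\mu$ then collapses, after using $\langle A_i,A_i\rangle=0$, to the requirement $\psi^{\mathcal{A}}(\langle a_i,a_j\rangle)=1$, which is precisely self-duality; and $\mu$ is visibly locally constant, being $\psi^{\mathcal{A}}$ composed with continuous maps. (One could instead observe that $\beta$ factors through the mod-$2$ symplectic space $A/2A$ as an alternating $\mathbb{F}_2$-form, choose a quadratic refinement $q$, and take $\mu=(-1)^{q}$.) In every case $\psi_A^{\mathcal{A}}(a,t):=\mu(a)\,\psi^{\mathcal{A}}(t)$ is a smooth character of $A_H$ extending $\psi^{\mathcal{A}}$.

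The second assertion is then formal. Putting $\psi_A^{\mathcal{B}}:=\varphi\circ\psi_A^{\mathcal{A}}$, it is a character of $A_H$ as a composite of group homomorphisms; its restriction to $\{0\}\times F$ is $\varphi\circ\psi^{\mathcal{A}}=\psi^{\mathcal{B}}$; and it is smooth because $\ker\psi_A^{\mathcal{B}}\supseteq\ker\psi_A^{\mathcal{A}}$ is open — equivalently $\psi_A^{\mathcal{B}}(a,t)=(\varphi\circ\mu)(a)\,\psi^{\mathcal{B}}(t)$ with $\varphi\circ\mu$ still locally constant. I expect the only genuinely delicate point to be the construction of $\mu$ when $p=2$: this is where one must use a polarisation of the self-dual subgroup (equivalently a quadratic refinement of the mod-$2$ form), and where one should check that the resulting $\mu$ is locally constant, so that $\psi_A^{\mathcal{A}}$ is a smooth character and not merely an abstract one. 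Everything else — the cocycle translation, the case $p$ odd, and the passage to $\mathcal{B}$ — is routine.
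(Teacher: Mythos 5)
The paper itself gives no proof of this lemma; it delegates entirely to the cited reference (Lemma 2.2\,a) of the author's earlier $\ell$-modular theta paper), so there is nothing in the present source to compare against line by line. Your proof is correct and complete. The reduction to a cocycle problem is set up properly — $\psi_A^{\mathcal{A}}(a,t)=\mu(a)\psi^{\mathcal{A}}(t)$ is a character of $A_H$ iff $\mu(a+a')\,\psi^{\mathcal{A}}(\tfrac12\langle a,a'\rangle)=\mu(a)\mu(a')$ — and your deduction from self-duality that $\beta=\psi^{\mathcal{A}}(\tfrac12\langle\cdot,\cdot\rangle)$ is a $\{\pm1\}$-valued symmetric bimultiplicative pairing with $\beta(a,a)=1$ is exactly right. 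The case split is also the expected one: for $p$ odd (so $2\in\mathcal{O}_F^\times$, which covers every case except $F/\mathbb{Q}_2$ finite) and for $A$ lagrangian, $\beta\equiv1$ and $\mu\equiv1$ works; for $p=2$ the polarization $A=A_1\oplus A_2$ with $\mu(a_1+a_2)=\psi^{\mathcal{A}}(\tfrac12\langle a_1,a_2\rangle)$ works, and your cocycle verification correctly collapses to $\psi^{\mathcal{A}}(\langle a_1,a_2'\rangle)=1$, which is self-duality of $A$. The passage to $\mathcal{B}$ by composing with $\varphi$ is formal, as you say. One minor remark: the existence of a topological isotropic decomposition $A=A_1\oplus A_2$ for an \emph{arbitrary} self-dual closed subgroup $A$ (not just a self-dual lattice or a lagrangian) deserves an extra sentence — one decomposes $W$ orthogonally into a piece where $A$ is a self-dual lattice and a piece where $A$ is a lagrangian, then polarizes the lattice piece — but this is a standard structural fact and your parenthetical sketch already points the reader the right way.
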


This lemma can be proved in the exact same elementary way as  \cite[Lem. 2.2 a)]{trias_theta1}. For the sake of shortness, we simply refer to the latter. The heart of the current section is the following proposition, generalising \cite[Lem. 2.2 b)]{trias_theta1} where the $\mathcal{A}$-algebra $\mathcal{B}$ is a field:

\begin{prop} \label{metaplectic_representations_prop} Let $\psi_A^\mathcal{A}$ be as above and set $V_A^\mathcal{B} = \textup{ind}_{A_H}^H(\psi_A^\mathcal{B}) \in \textup{Rep}_\mathcal{B}(H)$.
\begin{enumerate}[label=\textup{\alph*)}]
\item The map $I \mapsto IV_A^\mathcal{B}$ defines a bijection from the set of ideals of $\mathcal{B}$ to the set of sub-$\mathcal{B}[H]$-modules of $V_A^\mathcal{B}$. In particular, $V_A^\mathcal{B}$ is everywhere irreducible;
\item The $\mathcal{B}[H]$-module $V_A^\mathcal{B}$ is admissible and $V_A^\mathcal{B} = \textup{Ind}_{A_H}^H(\psi_A^\mathcal{B})$;
\item $V_A^\mathcal{B}$ satisfies Schur's lemma, that is $\textup{End}_{\mathcal{B}[H]}(V_A^\mathcal{B}) = \mathcal{B}$.
\end{enumerate} \end{prop}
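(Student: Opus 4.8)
The plan is to prove the three statements in order, bootstrapping the structural fact in a) and then deriving b) and c) from it, mostly by reducing to the classical situation over a field as in \cite{trias_theta1}.

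\emph{Part a).} First I would identify $V_A^\mathcal{B}$ explicitly as a space of functions. Pick a set of representatives for the cosets $A_H \backslash H$; since $H = W \times F$ and $A_H = A \times F$, these are parametrised by $W/A$, and because $A$ is self dual (hence cocompact and open-in-the-relevant-sense: it contains a lattice and is contained in its $\mathcal{O}_F$-span up to the usual arguments), the quotient $W/A$ is a locally profinite space on which compactly supported locally constant functions form a free $\mathcal{B}$-module with a basis indexed by any locally constant compactly supported $\mathcal{B}$-valued function system. Concretely $V_A^\mathcal{B} \simeq C_c^\infty(W/A, \mathcal{B})$ as a $\mathcal{B}$-module, and crucially this identification is compatible with scalar extension: $V_A^\mathcal{B} = V_A^\mathcal{A} \otimes_\mathcal{A} \mathcal{B} = C_c^\infty(W/A,\mathcal{A}) \otimes_\mathcal{A} \mathcal{B}$. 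Now the key point: for \emph{any} commutative ring $R$ and locally profinite space $Y$ with the $H$-action in question, I claim the sub-$R[H]$-modules of $C_c^\infty(Y,R)$ are exactly the $IC_c^\infty(Y,R)$ for $I$ an ideal of $R$. Given a nonzero sub-$R[H]$-module $M$, one uses the action of $W$ (acting by translation on $Y = W/A$ composed with the central character twist coming from $\psi_A^\mathcal{B}$) and the action of the center $F$: the center acts on $V_A^\mathcal{B}$ through $\psi^\mathcal{B}$, and the $W$-translations are transitive on $Y$, so from a single function $f \in M$ with $f(y_0) \neq 0$ one can, by taking linear combinations of $H$-translates and using a partition-of-unity/Mackey-type separation argument exactly as in the field case of \cite[Lem.~2.2~b)]{trias_theta1}, produce all indicator functions $\mathbf{1}_U \cdot c$ for $c$ in the ideal $I = \{f(y) : f \in M, y \in Y\}$ (one checks $I$ is an ideal using that multiplication by $C_c^\infty(Y,R)$-valued locally constant functions, realised via $W$-translates and averaging, stays inside $M$). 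Then $M = I \cdot C_c^\infty(Y,R)$, injectivity of $I \mapsto IV_A^\mathcal{B}$ being clear since $R \hookrightarrow C_c^\infty(Y,R)$ as the value at a fixed point. Everywhere irreducibility then follows immediately from Lemma~\ref{everywhere_irreducible_lemma}.

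\emph{Part b).} For admissibility, let $K$ be a compact open subgroup of $H$; one must show $(V_A^\mathcal{B})^K$ is a finitely generated $\mathcal{B}$-module. Working in the model $C_c^\infty(W/A,\mathcal{B})$, the $K$-invariants are supported on the (finitely many, by cocompactness of $A$ and compactness of the relevant set) $K$-orbits meeting a fixed compact open fundamental domain, and on each the invariance condition together with the central character pins down the function up to a scalar in $\mathcal{B}$, so $(V_A^\mathcal{B})^K$ is $\mathcal{B}$-free of finite rank — in fact one gets the precise rank from the field case by base change, since $(V_A^\mathcal{B})^K = (V_A^\mathcal{A})^K \otimes_\mathcal{A} \mathcal{B}$ by flatness of the invariants functor on these free modules. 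For the equality $\mathrm{ind} = \mathrm{Ind}$: since $A$ is cocompact in $W$ (it contains a lattice, being self dual), $A_H$ is cocompact in $H$, so compact induction and full induction from $A_H$ coincide; this is the standard fact that $\mathrm{ind}_K^G = \mathrm{Ind}_K^G$ when $K\backslash G$ is compact, and it holds over any ring.

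\emph{Part c).} For Schur's lemma, let $\Phi \in \mathrm{End}_{\mathcal{B}[H]}(V_A^\mathcal{B})$. Since $V_A^\mathcal{B} = \mathrm{ind}_{A_H}^H(\psi_A^\mathcal{B})$ is generated over $\mathcal{B}[H]$ by the single function $f_0$ supported on $A_H$, the endomorphism $\Phi$ is determined by $\Phi(f_0)$, and by Frobenius reciprocity $\mathrm{End}_{\mathcal{B}[H]}(\mathrm{ind}_{A_H}^H \psi_A^\mathcal{B}) = \mathrm{Hom}_{\mathcal{B}[A_H]}(\psi_A^\mathcal{B}, \mathrm{Ind}_{A_H}^H \psi_A^\mathcal{B}|_{A_H})$, which is the space of $(A_H,\psi_A^\mathcal{B})$-equivariant functions in the model; the self-duality $A^\perp = A$ forces any such function to be supported on $A_H$ (the twist by $\psi_A^\mathcal{B}$ and the Heisenberg commutator $\psi^\mathcal{B}(\langle w,a\rangle)$ kills the rest — this is where $A = A^\perp$ enters, exactly as in the proof that the Gelfand--Graev / Whittaker-type model has a one-dimensional Hecke algebra), hence is a scalar in $\mathcal{B}$. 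Alternatively, and more cheaply, one can deduce c) from a): $\mathrm{End}_{\mathcal{B}[H]}(V_A^\mathcal{B})$ acts on $V_A^\mathcal{B}$ preserving all sub-$\mathcal{B}[H]$-modules, and by a) these are the $IV_A^\mathcal{B}$; combined with the fact that $V_A^\mathcal{B}$ is generated by one element over $\mathcal{B}[H]$, an endomorphism is multiplication by its effect on that generator, which must lie in $\mathcal{B}$.

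\emph{Main obstacle.} The crux is the orbit/separation argument in part a): over a field one freely uses that a nonzero invariant subspace, acted on transitively-up-to-center by $W$, must contain enough indicator functions, but over a general (possibly non-reduced, non-Noetherian) ring $\mathcal{B}$ one must be careful that the "averaging over a compact open subgroup" operations land in $\mathcal{B}$ — this is precisely why $p$ is inverted in $\mathcal{A}$, so that normalised Haar measures with values in $\mathcal{B}$ exist — and that the set of values $I$ really forms an ideal and that $M \supseteq I C_c^\infty$. Once the function-space model and the existence of $\mathcal{B}$-valued Haar measures are in hand, the field-case argument of \cite[Lem.~2.2]{trias_theta1} transplants essentially verbatim; everything else is formal base-change and Frobenius reciprocity.
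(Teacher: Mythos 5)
Your overall strategy in parts a) and c) is close in spirit to the paper's: part a) rests on the same two pillars (identify $IV_A^\mathcal{B}$ with functions valued in $I$, then use $\mathcal{B}$-valued Haar averaging over a small group — exactly why $p$ is inverted in $\mathcal{A}$ — to pass from a single nonzero $f \in M$ to all of $I_f V_A^\mathcal{B}$), and the paper also proves $\mathrm{End}_{\mathcal{B}[H]}(V_A^\mathcal{B})=\mathcal{B}$ by reducing to the cyclic generator, though it does so via $K$-invariants and explicit elementary projectors $\phi_{w,L}$ rather than Frobenius reciprocity. But there are two genuine problems.

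\emph{Cocompactness is false.} In part b) you argue $\mathrm{ind}_{A_H}^H = \mathrm{Ind}_{A_H}^H$ because ``$A$ is cocompact in $W$ (it contains a lattice, being self dual)''. This is wrong whenever $F$ is local non-archimedean. If $A=X$ is a lagrangian, $W/X$ is an $m$-dimensional $F$-vector space, which is locally compact but not compact. If $A=L$ is a self-dual lattice, $W/L \cong (F/\mathcal{O}_F)^n$ is discrete and infinite, again not compact. So $A_H\backslash H \cong W/A$ is never compact in the $p$-adic case, and the ``compact induction $=$ full induction by cocompactness'' shortcut does not apply. The paper instead proves admissibility directly — counting the finitely many double cosets $A_H\backslash H/(L,0)$ supporting a nonzero $(L,0)$-invariant function, using $(L\cap A)^\perp = L^\perp + A$ and compactness of $L^\perp$ — and then quotes \cite[I.5.6 1)]{vig_book} to conclude $\mathrm{ind}=\mathrm{Ind}$ from admissibility. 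You need that route, not cocompactness.

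\emph{The ``cheap'' Schur argument is a gap.} In part c) you offer as an alternative that an endomorphism $\Phi$ preserves all $IV_A^\mathcal{B}$ and $V_A^\mathcal{B}$ is cyclic, hence $\Phi$ is ``multiplication by its effect on the generator, which must lie in $\mathcal{B}$''. This does not follow: knowing $V_A^\mathcal{B} = \mathcal{B}[H]\cdot f_0$ and that $\Phi$ preserves every $IV_A^\mathcal{B}$ tells you nothing a priori about why $\Phi(f_0)$ should be a $\mathcal{B}$-multiple of $f_0$ rather than some other generator. Over a field one would run the usual injective-then-surjective-then-scalar argument, but that uses irreducibility in the field sense and does not transfer as stated. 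Your Frobenius-reciprocity argument is essentially sound, but you should spell out why the $(A_H,\psi_A^\mathcal{B})$-bi-equivariant function is supported on $A_H$ \emph{over a ring}: for $w\notin A=A^\perp$, the relation $f((w,0))(1-\psi^\mathcal{B}(\langle w,a\rangle))=0$ for $a\in A$ only gives $f((w,0))=0$ after averaging over a small compact open $L\subset A$ with $\psi^\mathcal{B}(\langle w,L\rangle)\neq\{1\}$, which again needs the $\mathcal{B}$-valued normalised Haar measure; over a field one can simply say $1-\psi^\mathcal{B}(\langle w,a\rangle)\neq 0$ is invertible, but over $\mathcal{B}$ it may be a zero divisor. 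The paper sidesteps this by working with the free $\mathcal{B}$-module $(V_A^\mathcal{B})^K$ and the projectors $E_w$ coming from the $\phi_{w,L}$ of Lemma~\ref{metaplectic_rep_small_lemma}, which makes the scalar $\lambda$ appear directly.
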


\begin{proof} The core idea of the proof comes from \cite[Lem. 2.2 b) \& Prop 2.4 c)]{trias_theta1}, which was originally generalising \cite[Chap. 2, I.3 \& I.6]{mvw}. As some differences occur when dealing with $\mathcal{A}$-algebras instead of fields, we carefully examine and detail them below.

\noindent \textbf{a)} First remark that, assuming the bijection property holds, the second part of the statement is a mere application of Lemma \ref{everywhere_irreducible_lemma}. Therefore we focus our attention to proving that such a bijection holds.

The $\mathcal{B}[H]$-module $V_A^\mathcal{B}$ is generated as a $\mathcal{B}$-module by a family $(\chi_{w,L})$ we now describe. As $\psi_A^\mathcal{B}$ is smooth, there exists for all $w \in W$ an open compact subgroup $L_w$ of $W$ such that $\psi_A^\mathcal{B}(a) = 1$ for all $a \in A_H \cap (w,0) (L_w,0)(w,0)^{-1}$. Fix such choices of small enough lattices $(L_w)_{w\in W}$. Then if $L$ is a sublattice of $L_w$, there exists a unique function in $V_A^\mathcal{B}$ which is supported on $A_H (w,0) (L,0)$, right invariant under $(L,0)$ and taking the value~$1$ at $(w,0)$. One denotes it $\chi_{w,L}$. The $\mathcal{B}[H]$-module $V_A^\mathcal{B}$ being smooth, any $f$ in this compactly induced module can be written as a finite sum of such $\chi_{w,L}$, that is the family $(\chi_{w,L})_{w \in W, L \subset L_w}$ is generating $V_A^\mathcal{B}$. For this generation property, $I V_A^\mathcal{B}$ is identified with the set of functions in $V_A^\mathcal{B}$ taking values in $I$ as the family $(i \times \chi_{w,L})_{i \in I, w \in W, L \subset L_w}$ generates both of them. This implies the injectivity for the map $I \mapsto I V_A^\mathcal{B}$.

The surjectivy of $I \mapsto I V_A^\mathcal{B}$ amounts to proving that any sub-$\mathcal{B}[H]$-module of $V_A^\mathcal{B}$ is of the form $I V_A^\mathcal{B}$. For any subset $X$ of $V_A^\mathcal{B}$, define $I_X = < f(h) \ | \ h \in H, f \in X >$ the ideal in $\mathcal{B}$ generated by the set of values of functions in $X$. There is an obvious inclusion of $\mathcal{B}[H]$-modules $\mathcal{B}[H] \cdot X \subset I_X V_A^\mathcal{B}$. We claim even more: this inclusion actually is an equality. It is enough to prove it when $X$ is a singleton to deduce the result general case because $\mathcal{B}[H] \cdot X = \sum \mathcal{B}[H] \cdot f$ and $I_X V_A^\mathcal{B} = \sum I_f V_A^\mathcal{B}$ where the sums run over all $f \in X$. So from now on, suppose that $X$ is made of a single function $f$ in $V_A^\mathcal{B}$. We would like to prove that the reverse inclusion holds, that is:
$$I_f V_A^\mathcal{B} \subset \mathcal{B}[H] \cdot f.$$

As $p$ is invertible in $\mathcal{B}$, there exists a Haar measure of $H$ which takes values in $\mathcal{B}$ and is normalised over a compact open subgroup of $H$. Let $\mu$ be such a measure. The claim will then follow from the -- technical-to-state but rather clear -- observation below:

\begin{lem} \label{metaplectic_rep_small_lemma} Let $f$ be a non-zero function of $V_A^\mathcal{B}$. For any $w \in W$, fix a sufficiently small lattice $L_w$ in $W$ such that $(L_w,0)$ leaves $f$ right invariant and $\psi_A^\mathcal{B}(a)=1$ for all $a \in A_H \cap (w,0) (L_w,0) (w,0)^{-1}$. Then for any sublattice $L$ of $L_w$, there exists an element $\phi_{w,L} \in \mathcal{B}[H]$ such that $\phi_{w,L} \cdot f = f((w,0)) \chi_{w,L}$.  \end{lem}

\begin{proof} First of all, the fact that such a choice of lattices $(L_w)_{w \in W}$ exists comes for the smoothness of $V_A^\mathcal{B}$ and $\psi_A^\mathcal{B}$. Let $L$ be sublattice of $L_w$ and define:
$$\phi : a \in A \mapsto \frac{\psi_A^\mathcal{B}((-a,0))}{\textup{vol}(L^\perp \cap A)} \psi^\mathcal{B}(\langle -w , a \rangle) 1_{L^\perp \cap A}(a) \in \mathcal{B}$$
where $1_X$ is the characteristic funtion of $X$, $\mu_A$ is a Haar measure of $A$ normalised over a compact open subgroup and $\textup{vol}(L^\perp \cap A)$ is a power of $p$. Then an explicit computation will show that the function:
$$\phi \cdot f : h \in H \mapsto \int_A \phi(a) f(h(a,0)) d \mu_A(a) \in \mathcal{B}$$
belongs to $\mathcal{B}[H] \cdot f$ and is a scalar multiple of $\chi_{w,L}$.

We give short but prompt explanation of this last computational claim. Given that the function $\phi$ is compactly supported and locally constant, one can write -- up to some volume factor which is a mere power of $p$ -- the function $\phi \cdot f$ as a finite sum:
$$\sum \phi(a_i) f(h(a_i,0)) = \left(\sum \phi(a_i) (a_i,0)\right) \cdot f (h).$$
So $\phi \cdot f$ belongs to $\mathcal{B}[H] \cdot f$.  For all $w' \in W$, the compution mentioned above reads:
\begin{eqnarray*} 
\phi \cdot f((w',0))& = & f((w',0)) \times \frac{1}{\textup{vol}(L^\perp \cap A)}  \int_{L^\perp \cap A} \psi^\mathcal{B}(\langle w'-w,a \rangle ) d \mu_A(a).
\end{eqnarray*}
A classical argument rewrites the last term as $1_{A+w+L}(w')$. Therefore $\phi \cdot f$ has support $A_H(w,0)(L,0)$, is right invariant under $(L,0)$ and takes the value $f((w,0))$ at $(w,0)$. By unicity, one must have $\phi \cdot f = f((w,0)) \chi_{w,L}$. \end{proof}

Applying the previous lemma, we conclude that the reverse inclusion $I_f V_A^{\mathcal{B}} \subset \mathcal{B}[H] \cdot f$ holds. So the map $I \mapsto I V_A^\mathcal{B}$ is injective and surjective, that is being bijective.

\noindent \textbf{b)} Let $L$ be an open compact subgroup of $W$. Let $w \in W$. Consider the set of functions left $\psi_A^\mathcal{B}$-equivariant, supported on the double coset $A_H (w,0) (L,0)$ and right invariant under $(L,0)$. Actually this space of functions is isomorphic to either $\mathcal{B}$ or $0$ as a consequence of the formula for invariants vectors in compactly induced representations \cite[I.5.6]{vig_book}. Denote by $\chi_{w,L}$ the appropriate generator, meaning the function that takes value either $1$ or $0$ at $(w,0)$. Fix representantives in $W$ for the double coset $A_H \backslash H / (L,0) \simeq A \backslash W / L = W/ (A+L)$. Remark that the admissibility of $V_A^\mathcal{B}$ follows from the fact that, given some $L$, there are only finitely many representantives $w$ giving rise to non-zero functions $\chi_{w,L}$. We are now proving this claim about functions $\chi_{w,L}$.

Suppose $\chi_{w,L}$ is non-zero. For all $l \in L \cap A$, one has:
$$1=\chi_{w,L}((w,0))=\chi_{w,L}((w,0)(l,0))= \chi_{w,L}((l,\langle w,l \rangle )(w,0))=\psi^\mathcal{B}(\langle w,l \rangle )\psi_A^\mathcal{B}((l,0)).$$
Thus for all $l \in L \cap A$, the relation $\psi^\mathcal{B}(\langle w,l \rangle) = \psi_A^{\mathcal{B}}((-l,0))$ must hold. It means that any two representantives $w$ and $w'$, giving rise to non zero $\chi_{w,L}$ and $\chi_{w',L}$, must satisfy the relation $\psi^\mathcal{B}(\langle w-w',l \rangle)=1$ for all $l \in L \cap A$ \textit{i.e.} $w-w' \in (L \cap A)^\perp$. However:
$$(L \cap A)^\perp = L^\perp + A^\perp = L^\perp + A.$$
As $L$ is compact open, its orthogonal $L^\perp$ is compact open too because this holds for lattices in $W$. So the image of $L^\perp$ in the quotient $W/(A+L)$ is a finite set, which means the set of representatives $w$ giving rise to non-zero $\chi_{w,L}$, when $L$ is fixed, is finite.

To conclude, for any sufficiently small open compact subgroup $L$ of $W$, the condition for smallness being $L \times \textup{Ker}(\psi^\mathcal{B})$ is a subgroup of $H$, one has:
$$(V_A^\mathcal{B})^{L \times \textup{Ker}(\psi^\mathcal{B})} = \bigoplus_{\chi_{w,L} \neq 0} \mathcal{B} \cdot \chi_{w,L}$$
where the right-hand side sum is finite. So the smooth $\mathcal{B}[H]$-module $V_A^\mathcal{B}$ is admissible, and according to \cite[I.5.6 1)]{vig_book}, it is equivalent to saying that $\textup{ind}_{A_H}^H(\psi_A^\mathcal{B})  = \textup{Ind}_{A_H}^H(\psi_A^\mathcal{B})$.

\noindent \textbf{c)} As proved in the previous point, there exists a sufficiently small open compact subgroup $L$ of $W$ such that $K=L\times \textup{Ker}(\psi^\mathcal{B})$ is a subgroup of $H$ and:
$$(V_A^\mathcal{B})^K = \bigoplus_{\chi_{w,L} \neq 0} \mathcal{B} \cdot \chi_{w,L}$$
where the right-hand side sum is finite. In addition, there exists a non-zero $\chi_{w,L}$ for $w \in W$ if the condition ``$\psi_A^{\mathcal{B}}(a) = 1$ for all $a \in A_H \cap (w,0)(L,0)(w,0)^{-1}$'' is satisfied. Therefore, up to strengthening the sufficiently small condition, one may suppose that $(V_A^\mathcal{B})^K \neq 0$. Because every $\mathcal{B} \cdot \chi_{w,L}$ is isomorphic to $\mathcal{B}$, and the sum runs over functions with mutually disjoint supports, the $\mathcal{B}$-module $(V_A^\mathcal{B})^K$ is a free module of finite rank.

Thanks to point a), the $\mathcal{B}[H]$-module $V_A^\mathcal{B}$ is generated by a single element $\chi_{w,L}$. Indeed, the ideal $I_{\chi_{w,L}}=< \chi_{w,L}(h) \ | \ h \in H>$ satisfies $\mathcal{B}[H] \cdot \chi_{w,L} = I_{\chi_{w,L}} V_A^\mathcal{B}$ and contains $1$ since $\chi_{w,L}((w,0))=1$. Thus the restriction to $(V_A^\mathcal{B})^K$ induces an injective morphism of $\mathcal{B}$-algebras:
$$\xi : \textup{End}_{B[H]}(V_A^\mathcal{B}) \hookrightarrow  \textup{End}_{\mathcal{H}_\mathcal{B}(H,K)}((V_A^\mathcal{B})^K),$$
where $(V_A^\mathcal{B})^K$ is a module on the relative Hecke algebra $\mathcal{H}_\mathcal{B}(H,K)$ \cite[I.4.5]{vig_book}.

The module $(V_A^\mathcal{B})^K$ being free over $\mathcal{B}$, write its basis $\mathfrak{B}= (\chi_{w,L})_w$. In this basis, the function $\phi_{w,L}$ defined above in the proof of Lemma \ref{metaplectic_rep_small_lemma} becomes the elementary projector $E_w$ onto $\chi_{w,L}$ \textit{i.e.} for all $w' \in \mathfrak{B}$ one has:
$$\phi_{w,L} \cdot \chi_{w',L} = \chi_{w',L}((w,0)) \times \chi_{w,L} = \left\{ \begin{array}{l}
0 \textup{ if } w' \neq w; \\
\chi_{w,L} \textup{ otherwise}. \end{array} \right.$$

Let now $\Phi \in \textup{End}_{\mathcal{B}[H]}(V_A^\mathcal{B})$. Then the image $\xi(\Phi)$ of $\Phi$ in $\textup{End}_{\mathcal{H}_\mathcal{B}(H,K)}((V_A^\mathcal{B})^K)$ commutes with $E_w$ for all $w \in \mathfrak{B}$ as it commutes with the action of $\phi_{w,L}$. Because of this commutation relation between $\xi(\Phi)$ and $E_w$, there exists a scalar $\lambda_w \in \mathcal{B}$ such that $\xi(\Phi)(\chi_{w,L}) = \lambda_w \times \chi_{w,L}$. As any $\chi_{w,L}$ generates $V_A^\mathcal{B}$ as a $\mathcal{B}[H]$-module, it does generate $(V_A^\mathcal{B})^K$ as a $\mathcal{H}_\mathcal{B}(H,K)$-module. This last fact implies that all the $\lambda_w$ are equal. Therefore there exists $\lambda \in \mathcal{B}$ such that $\xi(\Phi) = \lambda \textup{Id}_{(V_A^\mathcal{B})^K}$. So $\Phi = \lambda \textup{id}_{V_A^\mathcal{B}}$ because $\xi$ is injective. \end{proof}

The following can be easily deduced from Proposition \ref{metaplectic_representations_prop} that has just been proved and the finiteness property of the compact induction:

\begin{cor} \label{metaplectic_representations_cor} Let $\mathcal{B}'$ be a $\mathcal{B}$-algebra given by the ring morphism $\varphi' : \mathcal{B} \to \mathcal{B}'$. Then the morphism $\varphi'$ induces a canonical isomorphism of smooth $\mathcal{B}'[H]$-modules:
$$V_A^\mathcal{B} \otimes_\mathcal{B} \mathcal{B}' \simeq V_A^{\mathcal{B}'}.$$
It is given on simple tensor elements by the map $f \otimes_\mathcal{B} b' \mapsto b' \times (\varphi' \circ f)$. \end{cor}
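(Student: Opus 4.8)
The plan is to write down the obvious base-change map and prove it is bijective by unwinding the explicit description of $V_A^{\mathcal{B}}$ from Proposition \ref{metaplectic_representations_prop}. Since the structure morphism of $\mathcal{B}'$ as an $\mathcal{A}$-algebra is $\varphi' \circ \varphi$, one has $\psi_A^{\mathcal{B}'} = \varphi' \circ \psi_A^{\mathcal{B}}$, so that for $f \in V_A^{\mathcal{B}}$ the composite $\varphi' \circ f$ is locally constant, compactly supported modulo $A_H$, and left $\psi_A^{\mathcal{B}'}$-equivariant, i.e.\ $\varphi' \circ f \in V_A^{\mathcal{B}'}$. The map $(f,b') \mapsto b' \cdot (\varphi' \circ f)$ is $\mathcal{B}$-bilinear, hence induces a $\mathcal{B}'$-linear map $\Psi : V_A^{\mathcal{B}} \otimes_\mathcal{B} \mathcal{B}' \to V_A^{\mathcal{B}'}$; since the $H$-actions on both sides are by right translation and commute with post-composition by $\varphi'$, the map $\Psi$ is $\mathcal{B}'[H]$-linear. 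Everything then reduces to showing $\Psi$ is an isomorphism.

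The crux is that the generators $\chi_{w,L}$ transport correctly. Fix a compact open $L \leq W$ small enough that $K_0 = L \times \textup{Ker}(\psi^{\mathcal{A}})$ is a subgroup of $H$; note $\textup{Ker}(\psi^{\mathcal{A}}) = \textup{Ker}(\psi^{\mathcal{B}}) = \textup{Ker}(\psi^{\mathcal{B}'})$ because $\psi^{\mathcal{A}}$ has values in $p$-power roots of unity and $\varphi$, $\varphi' \circ \varphi$ are injective on those by Section \ref{notations_section}. For a coset representative $w$ of $W/(A+L)$, the subgroup $A_H \cap (w,0)(L,0)(w,0)^{-1}$ is pro-$p$, so the vanishing condition ``$\psi_A^{\mathcal{B}}(a) = 1$ on it'' recorded in the proof of Proposition \ref{metaplectic_representations_prop} is equivalent to the same condition with $\psi_A^{\mathcal{A}}$, hence also with $\psi_A^{\mathcal{B}'}$; thus the set $S$ of representatives $w$ with $\chi_{w,L}^{\mathcal{B}} \neq 0$ does not depend on the coefficient ring. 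For $w \in S$ one checks that $\varphi' \circ \chi_{w,L}^{\mathcal{B}}$ has support $A_H(w,0)(L,0)$, is right $(L,0)$-invariant and left $\psi_A^{\mathcal{B}'}$-equivariant, and takes the value $1$ at $(w,0)$; by the uniqueness built into Proposition \ref{metaplectic_representations_prop} it therefore equals $\chi_{w,L}^{\mathcal{B}'}$, i.e.\ $\Psi(\chi_{w,L}^{\mathcal{B}} \otimes 1) = \chi_{w,L}^{\mathcal{B}'}$.

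Granting this, both directions are short. By Proposition \ref{metaplectic_representations_prop}, $(V_A^{\mathcal{B}'})^{K_0} = \bigoplus_{w \in S} \mathcal{B}' \chi_{w,L}^{\mathcal{B}'}$ and every element of $V_A^{\mathcal{B}'}$ is a finite sum of such $\chi$'s (this is the finiteness of compact induction), so $\Psi$ is surjective. For injectivity, an element $\xi$ of the kernel can be written $\xi = \sum_i f_i \otimes b_i'$ with all $f_i$ fixed by a common $K_0$; expanding each $f_i$ in the basis $(\chi_{w,L}^{\mathcal{B}})_{w \in S}$ of $(V_A^{\mathcal{B}})^{K_0}$ and collecting terms gives $\xi = \sum_{w \in S} \chi_{w,L}^{\mathcal{B}} \otimes c_w'$, whence $0 = \Psi(\xi) = \sum_{w \in S} c_w' \chi_{w,L}^{\mathcal{B}'}$; evaluating at each $(w_0,0)$ and using that the double cosets $A_H(w,0)(L,0)$ are pairwise disjoint forces $c_{w_0}' = 0$, so $\xi = 0$. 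Unravelling the construction shows $\Psi$ is the map described in the statement. The only delicate point in the whole argument is the coefficient-independence of the support set $S$ — equivalently, the compatibility of the functions $\chi_{w,L}$ across coefficient rings — which is precisely where the standing hypothesis that $p$ be invertible, in the form of the injectivity of $\varphi$ on $p$-power roots of unity, is used; once that is granted, the remaining verifications are the same bookkeeping already carried out in Proposition \ref{metaplectic_representations_prop}.
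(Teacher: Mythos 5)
Your proof is correct and takes the route the paper merely gestures at: the paper states the corollary as an easy consequence of Proposition~\ref{metaplectic_representations_prop} and the finiteness property of compact induction, and your argument is a careful execution of exactly that — using the decomposition of $(V_A^{\mathcal{B}})^{K_0}$ into the free $\mathcal{B}$-module on the $\chi_{w,L}$, the invariance of the index set $S$ under base change because the structure morphism is injective on $p$-power roots of unity, and the fact that every vector of a compactly induced module lies in some $K_0$-invariants. The only small imprecision is referring to $A_H\cap(w,0)(L,0)(w,0)^{-1}$ as a subgroup (it is merely a subset of $A_H$), but this does not affect the argument: the relevant point is that $\psi_A^{\mathcal{B}}$ takes values in $p$-power roots of unity on $A_H$, so the vanishing condition is insensitive to the choice of coefficient ring.
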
 

This result will allow to reduce any problem over an $\mathcal{A}$-algebra to a problem over $\mathcal{A}$, because applying the corollary leads to the canonical identification:
$$V_A^\mathcal{B} \simeq V_A^\mathcal{A} \otimes_\mathcal{A} \mathcal{B}.$$
Furthermore, one can consider $\mathcal{A}$-algebras that are not integral domains. For instance, if $\mathcal{B} = \prod_i \mathcal{B}_i$ is a finite product of $\mathcal{A}$-algebras $(\mathcal{B}_i)$, then:
$$V_A^\mathcal{B} \simeq \bigoplus_i V_A^{\mathcal{B}_i}.$$

\subsection{Changing models from $V_{A_1}^\mathcal{B}$ to $V_{A_2}^\mathcal{B}$} \label{changing_models_section}

Let $A_1$ and $A_2$ be two self-dual subgroups of $W$. Let $\psi_{A_1}^\mathcal{A}$ be a character that extends $\psi^\mathcal{B}$ to $A_{1,H}$ as in Lemma \ref{extension_of_psi_B_lem}. Similarly, fix an extension $\psi_{A_2}^\mathcal{A}$ of $\psi^\mathcal{B}$ with respect to $A_{2,H}$. Once again, set $\psi_{A_1}^\mathcal{B} = \varphi \circ \psi_{A_1}^\mathcal{A}$ and $\psi_{A_2}^\mathcal{B} = \varphi \circ \psi_{A_2}^\mathcal{A}$, which are both smooth and non-trivial characters. Suppose $\omega \in W$ satisfies the condition:
$$\psi_{A_1}^\mathcal{B} ((a,0)) \psi_{A_2}^\mathcal{B} ((a,0))^{-1} = \psi^\mathcal{B}( \langle a ,\omega \rangle) \textup{ for all } a \in A_1 \cap A_2.$$
Note that such an $\omega$ always exist as the left-hand side defines a character of $A_1 \cap A_2$.

Let $\mu$ be a Haar measure with values in $\mathcal{B}$ of the quotient $A_1 \cap A_2 \backslash A_2$. Define:
$$I_\mu = < \mu(K) \ | \ K \textup{ open compact subgroup}>$$
the ideal of $\mathcal{B}$ generated by the various values taken by $\mu$ on the open compact subgroups of $A_1 \cap A_2 \backslash A_2$. By unicity of the Haar measure, the ideal $I_\mu$ is prinicpal and generated by any $\mu(K)$ as long as the pro-order of $K$ is invertible in $\mathcal{B}$. The measure is said to be invertible if $I_\mu = \mathcal{B}$. Of course, every normalised Haar measure, that is a measure taking the value $1$ on a compact open subgroup, is invertible. For $\mu$ to be invertible, it is necessary and sufficient that there exists a compact open subgroup whose measure is a unit in $\mathcal{B}$ \textit{i.e.} $\mu$ is a unit multiple of a normalised Haar measure.

\begin{prop} \label{intertwining_maps_models_A1_A2_prop} The map $I_{A_1,A_2,\mu,\omega}$ associating to $f \in V_{A_1}^\mathcal{B}$ the function:
$$I_{A_1,A_2,\mu,\omega}f : h \longmapsto \int_{A_{1,H} \cap A_{2,H} \backslash A_{2,H}} \psi_{A_2}^\mathcal{B}(a)^{-1} f((\omega,0) a h) \ d\mu(a)$$
is a morphism of smooth $\mathcal{B}[H]$-modules from $V_{A_1}^\mathcal{B}$ to $V_{A_2}^\mathcal{B}$. Its image is $I_{\mu} V_{A_2}^\mathcal{B}$ and, as a result, $I_{A_1,A_2,\mu,\omega}$ is an isomorphism if and only if $\mu$ is an invertible measure. In addition, any invertible measure $\mu$ induces an isomorphism of $\mathcal{B}$-modules:
$$\textup{Hom}_{\mathcal{B}[H]}(V_{A_1}^\mathcal{B},V_{A_2}^\mathcal{B}) = \{ \lambda I_{A_1,A_2,\mu,\omega} \ | \ \lambda \in \mathcal{B} \} \simeq \mathcal{B}.$$ \end{prop}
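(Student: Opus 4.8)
The strategy is to split the statement into three parts: (i) the integral $I_{A_1,A_2,\mu,\omega}f$ actually lands in $V_{A_2}^\mathcal{B}$ and is $H$-equivariant; (ii) its image is exactly $I_\mu V_{A_2}^\mathcal{B}$, whence the isomorphism criterion; (iii) the Hom-space computation. For (i), I would take $f = \chi_{w,L} \in V_{A_1}^\mathcal{B}$ as in the proof of Proposition \ref{metaplectic_representations_prop} — since these generate, it suffices to check everything on them — and compute $I_{A_1,A_2,\mu,\omega}\chi_{w,L}$ directly. The integrand is locally constant and compactly supported modulo $A_{1,H}\cap A_{2,H}$ (because $\chi_{w,L}$ is compactly supported modulo $A_{1,H}$, and the support condition cuts down to something compact modulo the intersection), so the integral is a finite sum of translates and hence well-defined with values in $\mathcal{B}$. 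The left $\psi_{A_2}^\mathcal{B}$-equivariance is forced by the choice of $\omega$: for $a_0 \in A_2$ one slides $(a_0,0)$ past the integral, absorbs it into the measure-invariance of $\mu$, and the compatibility condition on $\omega$ together with $\psi_{A_1}^\mathcal{B}$ extending $\psi^\mathcal{B}$ on $A_1\cap A_2$ produces exactly the factor $\psi_{A_2}^\mathcal{B}((a_0,0))$. Right $H$-equivariance is immediate since $h$ appears on the right of the argument. Smoothness is automatic from Corollary \ref{metaplectic_representations_cor}/admissibility, or directly since $f$ smooth forces $I_{A_1,A_2,\mu,\omega}f$ smooth.

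**Image and the isomorphism criterion.** For (ii), the key computation is to evaluate $I_{A_1,A_2,\mu,\omega}\chi_{w,L}$ on the right coset representatives for $A_{2,H}\backslash H$ and see that, for suitable $w$ and small $L$, it equals $\mu(K_0)\cdot \chi_{w',L'}$ for an appropriate generator of $V_{A_2}^\mathcal{B}$, where $K_0$ is a compact open subgroup of $A_1\cap A_2\backslash A_2$ over which the relevant integrand is constant. This is the analogue of the orthogonality computation at the end of the proof of Proposition \ref{metaplectic_representations_prop}: a classical Fourier-type argument over the compact group $A_1\cap A_2\backslash A_2$ with the character $a\mapsto \psi^\mathcal{B}(\langle w'-w,a\rangle)$ collapses the integral to a characteristic function times a volume factor. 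Since these $\chi_{w,L}$ generate $V_{A_1}^\mathcal{B}$ and the corresponding $\chi_{w',L'}$ generate $V_{A_2}^\mathcal{B}$ (by Proposition \ref{metaplectic_representations_prop} a)), the image of $I_{A_1,A_2,\mu,\omega}$ is the sub-$\mathcal{B}[H]$-module generated by all $\mu(K_0)\chi_{w',L'}$, i.e. $I_\mu V_{A_2}^\mathcal{B}$ — here one uses that $I_\mu$ is principal, generated by any $\mu(K)$ of invertible pro-order, so varying $L$ and the base point does not enlarge the ideal. The equivalence "$I_{A_1,A_2,\mu,\omega}$ is an isomorphism" $\Leftrightarrow$ "$\mu$ invertible" is then immediate: it is surjective iff $I_\mu = \mathcal{B}$, and injectivity comes for free (either by symmetry, swapping $A_1\leftrightarrow A_2$ and composing — the composite being a scalar by Schur — or because $I\mapsto IV_{A_1}^\mathcal{B}$ is a bijection and a nonzero kernel would be $JV_{A_1}^\mathcal{B}$ with $J$ contained in the annihilator, forcing $J=0$ once $\mu$ is invertible).

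**The Hom-space.** For (iii): fix one invertible (e.g. normalised) measure $\mu_0$, so $I_{A_1,A_2,\mu_0,\omega}$ is an isomorphism. Given any $\Phi \in \textup{Hom}_{\mathcal{B}[H]}(V_{A_1}^\mathcal{B},V_{A_2}^\mathcal{B})$, the composite $I_{A_1,A_2,\mu_0,\omega}^{-1}\circ\Phi \in \textup{End}_{\mathcal{B}[H]}(V_{A_1}^\mathcal{B})$ equals $\lambda\,\textup{id}$ for a unique $\lambda\in\mathcal{B}$ by Proposition \ref{metaplectic_representations_prop} c), hence $\Phi = \lambda\, I_{A_1,A_2,\mu_0,\omega}$; conversely every such scalar multiple is a homomorphism. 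This gives the bijection $\mathcal{B}\xrightarrow{\sim}\textup{Hom}_{\mathcal{B}[H]}(V_{A_1}^\mathcal{B},V_{A_2}^\mathcal{B})$, $\lambda\mapsto \lambda I_{A_1,A_2,\mu_0,\omega}$, which is clearly $\mathcal{B}$-linear; and $\{\lambda I_{A_1,A_2,\mu,\omega}\mid\lambda\in\mathcal{B}\}$ does not depend on the choice of invertible $\mu$ since two such measures differ by a unit. Note the statement as phrased only needs an invertible $\mu$ for the Hom-identification; when $\mu$ is merely a Haar measure the image is the proper submodule $I_\mu V_{A_2}^\mathcal{B}$.

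**Main obstacle.** The delicate point is part (ii): carefully checking that the integrand defining $I_{A_1,A_2,\mu,\omega}\chi_{w,L}$ is genuinely compactly supported and locally constant modulo $A_{1,H}\cap A_{2,H}$ (so that the $\mathcal{B}$-valued integral makes sense as a finite sum), and then running the Fourier collapse over the \emph{possibly non-compact} quotient $A_1\cap A_2\backslash A_2$ — one must verify that the support condition coming from $\chi_{w,L}$ confines the effective domain of integration to a compact open subgroup, on which the character orthogonality applies. Keeping track of which volume factor appears (it must be a value $\mu(K)$ generating $I_\mu$, not some spurious $p$-power that could already be a unit) is where the argument needs the most care; everything else is bookkeeping on top of Proposition \ref{metaplectic_representations_prop}.
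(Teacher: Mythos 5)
Your proposal is correct and follows essentially the paper's route: well-definedness and $H$-equivariance first, then use Proposition \ref{metaplectic_representations_prop}~a) to identify the image and kernel as $IV_{A_2}^\mathcal{B}$ and $I'V_{A_1}^\mathcal{B}$ for ideals $I,I'$, evaluate on an explicit test function to exhibit a unit, and deduce the Hom-space statement from Schur (Proposition \ref{metaplectic_representations_prop}~c)). One small remark: the ``Fourier collapse over a possibly non-compact quotient'' that you flag as the main obstacle does not actually arise in the paper's argument. Rather than scanning all $\chi_{w,L}$ over coset representatives, the paper tests on the single function $\chi_{\omega,L}$ whose base point is the very element $\omega$ in the intertwining kernel, and evaluates at $h=(0,0)$; the support condition then confines the integral to $L\cap A_1\cap A_2\backslash L\cap A_2$, and after shrinking $L$ so that $\psi_{A_2}^\mathcal{B}$ is trivial there, the integrand is identically $1$, giving directly $\operatorname{vol}(L\cap A_1\cap A_2\backslash L\cap A_2)$, a $p$-power volume, hence a unit for invertible $\mu$. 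No character orthogonality is needed at this stage. Your alternative injectivity route via swapping $A_1\leftrightarrow A_2$ and invoking Schur is sound in spirit but requires showing the resulting scalar is a unit, which is additional work; the second route you offer (kernel $=I'V_{A_1}^\mathcal{B}$ with $i'\cdot(\text{unit})=0$ forcing $i'=0$) is the one the paper takes and is cleaner.
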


\begin{proof} On the one hand, the function $I_{A_1,A_2,\mu,\omega} f$ is well defined. Indeed for any $h \in H$, the function $a \in A_{2,H} \mapsto \psi_{A_2}^\mathcal{B}(a)^{-1} f((\omega,0)ah) \in \mathcal{B}$ is $(A_{1_H} \cap A_{2,H})$-left invariant and locally constant, so one can consider it is a function on $A_{1,H} \cap A_{2,H} \backslash A_{2,H} = A_1 \cap A_2 \backslash A_2$. The function $a \in A_{2,H} \mapsto f((\omega,0) a h) \in \mathcal{B}$ is compactly supported modulo $A_{1,H} \cap A_{2,H}$ because, as in \cite[Sec. 2.3]{trias_theta1}, the sum $A_1 + A_2$ is a closed subgroup of $H$. Finally, a change of variables implies that $I_{A_1,A_2,\mu,\omega} f$ is left $\psi_{A_2}^\mathcal{B}$-equivariant. The map $I_{A_1,A_2,\mu,\omega}$ is clearly $\mathcal{B}$-linear and $H$-equivariant so that $I_{A_1,A_2,\mu,\omega} \in \textup{Hom}_{\mathcal{B}[H]}(V_{A_1}^\mathcal{B}, V_{A_2}^\mathcal{B})$.

As a result of point a) from Proposition \ref{metaplectic_representations_prop}, the image of $I_{A_1,A_2,\mu,\omega}$ must be of the form $I V_{A_2}^\mathcal{B}$ for some ideal $I$ in $\mathcal{B}$. Actually, we proved a sharper results in the proof of point a) showing that:
$$I=\{ I_{A_1,A_2,\mu,\omega} f (h) \ | \ f \in V_{A_1}^\mathcal{B}, h \in H \}.$$
If $\mu$ is chosen to be invertible, then for any other measure $\mu'$, there exists $\lambda \in \mathcal{B}$ generating $I_{\mu'}$ and such that the image of $I_{A_1,A_2,\mu',\omega}$ is $I_{\mu'} I V_{A_2}^\mathcal{B} = \lambda I V_{A_2}^\mathcal{B}$. It reduces to consider the morphism $I_{A_1,A_2,\mu,\omega}$ when $\mu$ is invertible. In this case, we show below that the morphism is surjective and as injective.

Suppose $\mu$ is invertible. As in the proof of Proposition \ref{metaplectic_representations_prop}, choose a sufficiently small open compact subgroup $L$ of $W$ such that there exists a non-zero function $\chi_{\omega,L}$ supported on $A_{1,H} (\omega,0) (L,0)$, right invariant under $(L,0)$ and taking the value $1$ at $(\omega,0)$. One may as well suppose that $\psi_{A_2}((l,0)) = 1$ for all $l \in L$, by choosing an even smaller $L$ if needed. Then the formula for $\chi_{\omega,L}$ at $h=(0,0)$ reads:
\begin{eqnarray*} I_{A_1,A_2,\mu,\omega} \chi_{\omega,L}((0,0)) &=& \int_{L \cap A_1 \cap A_2 \backslash L \cap A_2} \psi_{A_2}((l,0))^{-1} \chi_{\omega,L}((\omega,0) (l,0) ) \ d\mu(l) \\
&=& \int_{L\cap A_1 \cap A_2 \backslash L \cap A_2} \chi_{\omega,L}((\omega,0)) \ d\mu(l) \\
& & \\
&=& \textup{vol}(L\cap A_1 \cap A_2 \backslash L \cap A_2). \end{eqnarray*}
The group $L \cap A_1 \cap A_2 \backslash L \cap A_2$ has pro-order a power of $p$, so its volume for the invertible measure $\mu$ is a unit \textit{i.e.} $I_{A_1,A_2,\mu,\omega} \chi_{\omega,L}((0,0)) \in \mathcal{B}^\times$.

Therefore the previous unit $I_{A_1,A_2,\mu,\omega} \chi_{\omega,L}((0,0))$ belongs to $I$ \textit{i.e.} $I=\mathcal{B}=I_\mu$.  It follows that the morphism $I_{A_1,A_2,\mu,\omega}$ is surjective. It is injective as well. Indeed, its kernel is of the form $I' V_{A_1}^\mathcal{B}$ for some ideal $I'$ of $\mathcal{B}$, and for all $i' \in I'$, the function $i' \chi_{\omega,L}$ belongs to the kernel. However the function $I_{A_1,A_2,\mu,\omega} (i' \chi_{\omega,L}) = i' I_{A_1,A_2,\mu,\omega} \chi_{\omega,L}$ takes the value $i'$ at $(0,0)$ and is the zero function. So $i'=0$ and $I'$ is the zero ideal of $\mathcal{B}$. \end{proof}

Consider the scalar extension functor:
$$V \in \textup{Rep}_\mathcal{A}(H) \mapsto V \otimes_\mathcal{A} \mathcal{B} \in \textup{Rep}_\mathcal{B}(H)$$
and denote $\phi_\mathcal{B} : \textup{Hom}_{\mathcal{A}[H]}(V_{A_1}^\mathcal{A},V_{A_2}^\mathcal{A}) \to \textup{Hom}_{\mathcal{B}[H]}(V_{A_1}^\mathcal{B},V_{A_2}^\mathcal{B})$ the map that is induced by functoriality.

In particular for all  $f \in \textup{Hom}_{\mathcal{A}[H]}(V_{A_1}^\mathcal{A},V_{A_2}^\mathcal{A})$, the following diagram, where the vertical arrows are given by the canonical $V_A^\mathcal{A} \to V_A^\mathcal{A} \otimes_\mathcal{A} \mathcal{B}$ of Corollary \ref{metaplectic_representations_cor}, is commutative:
$$\xymatrix{
		V_{A_1}^\mathcal{A} \ar[r]^{f} \ar[d] & V_{A_2}^\mathcal{A}  \ar[d] \\
		V_{A_1}^\mathcal{B} \ar[r]^{\phi_\mathcal{B}(f)} & V_{A_2}^\mathcal{B}
		}.$$
For $\omega \in W$, observe now that the two following conditions are equivalent:
\begin{itemize}[label=$\bullet$]
\item $\psi_{A_1}^\mathcal{A} ((a,0)) \psi_{A_2}^\mathcal{A} ((a,0))^{-1} = \psi^\mathcal{A}( \langle a ,\omega \rangle)$ for all $a \in A_1 \cap A_2$; 
\item $\psi_{A_1}^\mathcal{B} ((a,0)) \psi_{A_2}^\mathcal{B} ((a,0))^{-1} = \psi^\mathcal{B}( \langle a ,\omega \rangle)$ for all $a \in A_1 \cap A_2$.
\end{itemize}
Fix $\omega \in W$ satisfying one of the previous two. The corollary below is quite immediate:

\begin{cor} \label{intertwining_maps_models_A1_A2_cor} Let $\mu^\mathcal{A}$ be an invertible Haar measure of $A_1 \cap A_2 \backslash A_2$ with values in $\mathcal{A}$. Set $\mu^\mathcal{B} = \varphi \circ \mu^\mathcal{A}$. This latter measure is an invertible $\mathcal{B}$-valued measure. Then for all $M \in \textup{Hom}_{\mathcal{B}[H]}(V_{A_1}^\mathcal{B},V_{A_2}^\mathcal{B})$, there exists $\lambda \in \mathcal{B}$ such that:
$$M = \lambda \times I_{A_1,A_2,\mu^\mathcal{B},\omega} = \lambda \times \phi_\mathcal{B}(I_{A_1,A_2,\mu^\mathcal{A},\omega}).$$ \end{cor}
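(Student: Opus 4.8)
The strategy is to reduce everything to Proposition~\ref{intertwining_maps_models_A1_A2_prop} by checking two base-change compatibilities: first that $\mu^\mathcal{B}$ is an invertible $\mathcal{B}$-valued Haar measure of $A_1 \cap A_2 \backslash A_2$, and second that the scalar extension of the intertwiner over $\mathcal{A}$ is the intertwiner over $\mathcal{B}$, that is $\phi_\mathcal{B}(I_{A_1,A_2,\mu^\mathcal{A},\omega}) = I_{A_1,A_2,\mu^\mathcal{B},\omega}$ inside $\textup{Hom}_{\mathcal{B}[H]}(V_{A_1}^\mathcal{B},V_{A_2}^\mathcal{B})$. Once both are granted, Proposition~\ref{intertwining_maps_models_A1_A2_prop} applied to the invertible measure $\mu^\mathcal{B}$ gives $\textup{Hom}_{\mathcal{B}[H]}(V_{A_1}^\mathcal{B},V_{A_2}^\mathcal{B}) = \{ \lambda \, I_{A_1,A_2,\mu^\mathcal{B},\omega} \mid \lambda \in \mathcal{B}\}$; writing an arbitrary $M$ as $\lambda \, I_{A_1,A_2,\mu^\mathcal{B},\omega}$ and substituting the second compatibility yields $M = \lambda \, \phi_\mathcal{B}(I_{A_1,A_2,\mu^\mathcal{A},\omega})$, which is the assertion.

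For the first compatibility, note that every compact open subgroup of $W$ is pro-$p$ (being an $\mathcal{O}_F$-lattice, or trivial in the finite case), hence so is every compact open subgroup of $A_1\cap A_2\backslash A_2$; since $p$ is invertible in $\mathcal{B}$, $\mathcal{B}$-valued Haar measures exist there. Exactly as for the compact induction in Corollary~\ref{metaplectic_representations_cor}, one has $C_c^\infty(A_1\cap A_2\backslash A_2,\mathcal{B}) \simeq C_c^\infty(A_1\cap A_2\backslash A_2,\mathcal{A})\otimes_\mathcal{A}\mathcal{B}$, under which $\mu^\mathcal{B}=\varphi\circ\mu^\mathcal{A}$ becomes $\mu^\mathcal{A}\otimes_\mathcal{A}\textup{id}_\mathcal{B}$; left $A_2$-equivariance and non-vanishing are inherited, so $\mu^\mathcal{B}$ is a Haar measure. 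It is invertible: since $\mu^\mathcal{A}$ is, there is a compact open $K$ with $\mu^\mathcal{A}(K)\in\mathcal{A}^\times$, and $\varphi(\mathcal{A}^\times)\subset\mathcal{B}^\times$, so $\mu^\mathcal{B}(K)\in\mathcal{B}^\times$, i.e. $I_{\mu^\mathcal{B}}=\mathcal{B}$.

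For the second, both $\phi_\mathcal{B}(I_{A_1,A_2,\mu^\mathcal{A},\omega})$ and $I_{A_1,A_2,\mu^\mathcal{B},\omega}$ are $\mathcal{B}[H]$-linear maps $V_{A_1}^\mathcal{B}\to V_{A_2}^\mathcal{B}$, so it suffices to compare them on a $\mathcal{B}[H]$-generating family of $V_{A_1}^\mathcal{B}$. Take the generators $\chi_{w,L}$ of $V_{A_1}^\mathcal{B}$ used in the proof of Proposition~\ref{metaplectic_representations_prop}; by uniqueness of these functions and injectivity of $\varphi$ on $p$-power roots of unity, they are precisely the images under the canonical map $V_{A_1}^\mathcal{A}\to V_{A_1}^\mathcal{B}$ of the corresponding functions in $V_{A_1}^\mathcal{A}$. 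On such a generator, the commutative square preceding the corollary shows that $\phi_\mathcal{B}(I_{A_1,A_2,\mu^\mathcal{A},\omega})$ sends it to the canonical image of $I_{A_1,A_2,\mu^\mathcal{A},\omega}(\chi_{w,L}) \in V_{A_2}^\mathcal{A}$. On the other hand, for fixed $h$ the integrand defining $I_{A_1,A_2,\mu^\mathcal{B},\omega}\chi_{w,L}$ is locally constant and compactly supported modulo $A_{1,H}\cap A_{2,H}$, so the integral is a finite $\mathcal{B}$-linear combination of values of $\varphi\circ\chi_{w,L}$; because $\psi_{A_2}^\mathcal{B}=\varphi\circ\psi_{A_2}^\mathcal{A}$, $\mu^\mathcal{B}=\varphi\circ\mu^\mathcal{A}$, and the compatibility condition on $\omega$ holds over $\mathcal{B}$, this finite sum is precisely $\varphi$ applied to the corresponding sum over $\mathcal{A}$, whence $I_{A_1,A_2,\mu^\mathcal{B},\omega}\chi_{w,L} = \varphi\circ\big(I_{A_1,A_2,\mu^\mathcal{A},\omega}\chi_{w,L}\big)$, i.e. the same image. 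Thus the two maps agree. The only delicate point is this last bookkeeping step, but it is entirely parallel to the passage from an integral to a finite sum already carried out in the proof of Proposition~\ref{metaplectic_representations_prop}, so no genuine obstacle arises. \hfill$\square$
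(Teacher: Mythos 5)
Your proof is correct, and since the paper only states that this corollary is ``quite immediate'' without giving an argument, you are filling in exactly the omitted details. Both compatibilities you check are the right ones, and the bookkeeping that $\phi_\mathcal{B}(I_{A_1,A_2,\mu^\mathcal{A},\omega}) = I_{A_1,A_2,\mu^\mathcal{B},\omega}$ via the finite-sum form of the integral and the relations $\psi_{A_2}^\mathcal{B}=\varphi\circ\psi_{A_2}^\mathcal{A}$, $\mu^\mathcal{B}=\varphi\circ\mu^\mathcal{A}$ is accurate. One small streamlining worth noting: once you know $\mu^\mathcal{B}$ is invertible, Proposition~\ref{intertwining_maps_models_A1_A2_prop} already tells you that $\textup{Hom}_{\mathcal{B}[H]}(V_{A_1}^\mathcal{B},V_{A_2}^\mathcal{B})\simeq\mathcal{B}$ with $I_{A_1,A_2,\mu^\mathcal{B},\omega}$ a generator, and $\phi_\mathcal{B}(I_{A_1,A_2,\mu^\mathcal{A},\omega})$ is an isomorphism because scalar extension preserves isomorphisms; so the two elements differ by a unit, and to pin that unit down to $1$ it suffices to compare a single value, e.g.\ evaluate both on $\chi_{\omega,L}$ at $h=(0,0)$ as already computed in the proof of the Proposition, rather than checking agreement on the whole generating family.
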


\section{Weil representations over $\mathcal{A}$-algebras} \label{weil_over_A_algebra_section}

Let $\mathcal{B}$ be an $\mathcal{A}$-algebra. Let $A$ a self-dual subgroup of $W$ and $V_A^\mathcal{B}=\textup{ind}_{A_H}^H(\psi_A^\mathcal{B})$ the smooth $\mathcal{B}[H]$-module built in Section \ref{models_associated_to_self_dual_subgp_section}, where $\psi_A^\mathcal{B}$ is an extension of $\psi^\mathcal{B}$ in the way of Lemma \ref{extension_of_psi_B_lem}. The symplectic group $\textup{Sp}(W)$ is naturally acting on $H$ through the first coordinate, that is:
$$g \cdot (w,t) = (g w ,t)$$
for $g \in \textup{Sp}(W)$ and $(w,t) \in H$. Of course, self-dual subgroups are preserved under this action, that is $gA$ is self-dual for all $g \in \textup{Sp}(W)$.

In this section $g$ always denotes an element of $\textup{Sp}(W)$. For $f \in V_A^\mathcal{B}$, the function:
$$I_gf : h \in H \mapsto f(g^{-1} \cdot h) \in \mathcal{B}$$
belongs to $V_{gA}^\mathcal{B} = \textup{ind}_{(gA)_H}^H(\psi_{gA}^\mathcal{B})$ where $\psi_{gA}^\mathcal{B}(g \cdot a ) = \psi_A^\mathcal{B}(a)$ for all $a \in A_H$. It is important to stress that $V_{gA}^\mathcal{B}$ depends on $g$, because even if $gA = A$, one may have that $\psi_{gA}^\mathcal{B} \neq \psi_{A}^\mathcal{B}$ as characters of $A_H$. Another caution is related to the map:
$$I_g : f \in V_A^\mathcal{B} \mapsto I_g f \in V_{gA}^\mathcal{B}$$
that is not a morphism of $\mathcal{B}[H]$-modules. Indeed, for $h_0 \in H$, one has:
$$I_g(h_0 \cdot f ) = (g \cdot h_0)\cdot I_g f$$
whereas $h_0 \cdot (I_g f) = I_g((g^{-1} \cdot h_0) \cdot f)$.

Recall from Section \ref{changing_models_section} that there exists $\omega_g \in W$ such that the condition:
$$\psi_{gA}^\mathcal{B} ((a,0)) \psi_{A}^\mathcal{B} ((a,0))^{-1} = \psi^\mathcal{B}( \langle a, \omega_g \rangle)$$
holds for all $a \in g A \cap A$. Then for any Haar measure $\mu$ of $gA \cap A \backslash A$, one can compose the following morphisms of $\mathcal{B}$-modules:
$$V_A^\mathcal{B} \overset{I_g}{\longrightarrow} V_{gA}^\mathcal{B} \overset{I_{gA,A,\mu,\omega_g}}{\longrightarrow} V_{A}^\mathcal{B}.$$
Therefore $I_{gA,A,\mu,\omega_g} \circ I_g \in \textup{End}_\mathcal{B} (V_A^\mathcal{B})$ is uniquely defined up to a scalar of $\mathcal{B}$, because the morphism $I_{gA,A,\mu,\omega_g}$ is, thanks to Proposition \ref{intertwining_maps_models_A1_A2_prop}.

Consider now the smooth $\mathcal{B}[H]$-module $(\rho_d,\textup{Ind}_F^H(\psi^\mathcal{B}))$ where $F$ is identified with the centre of $H$. All the $\mathcal{B}[H]$-modules $V_A^\mathcal{B}$ naturally embed in the latter because the restriction of $\psi_A^\mathcal{B}$ to $F$ is $\psi^\mathcal{B}$. Under this canonical identification for $V_A^\mathcal{B}$, one has:
$$I_{gA,A,\mu,\omega_g} \circ I_g \circ \rho_d(h) = \rho_d(g \cdot h) \circ I_{gA,A,\mu,\omega_g} \circ I_g.$$
In other words $I_{gA,A,\mu,\omega_g} \circ I_g \in \textup{Hom}_{\mathcal{B}[H]}((\rho_d,V_A^\mathcal{B}),(\rho_d^g,V_A^\mathcal{B}))$ where $\rho_d^g : h \mapsto \rho_d(g \cdot h)$.

Again in Section \ref{changing_models_section}, invertible Haar measures are defined as unit multiples of normalised Haar measures. These exactly are the measures that can take unit values on compact open subgroups. As the linear map $I_g$ is invertible, one easily deduces from Proposition \ref{intertwining_maps_models_A1_A2_prop} that the previous endomorphism is invertible:
\begin{lem} If $\mu$ is invertible, then $I_{gA,A,\mu,\omega_g} \circ I_g \in \textup{GL}_\mathcal{B}(V_A^\mathcal{B})$. \end{lem}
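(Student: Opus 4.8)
The plan is to combine the invertibility of the linear map $I_g$ with the characterisation of $I_{gA,A,\mu,\omega_g}$ provided by Proposition \ref{intertwining_maps_models_A1_A2_prop}. First I would recall that $I_g : V_A^\mathcal{B} \to V_{gA}^\mathcal{B}$, being given by precomposition with the invertible map $h \mapsto g^{-1}\cdot h$ on $H$, is a bijection of $\mathcal{B}$-modules with inverse $I_{g^{-1}}$ (suitably interpreted between the two compactly induced modules); in particular it is an isomorphism, although not one of $\mathcal{B}[H]$-modules. Then, since $\mu$ is an invertible Haar measure of $gA\cap A\backslash A$, Proposition \ref{intertwining_maps_models_A1_A2_prop} applied with $A_1=gA$, $A_2=A$ tells us that the associated intertwining operator $I_{gA,A,\mu,\omega_g} : V_{gA}^\mathcal{B} \to V_A^\mathcal{B}$ is an isomorphism of $\mathcal{B}[H]$-modules. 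Composing two isomorphisms of the underlying $\mathcal{B}$-modules, we conclude that $I_{gA,A,\mu,\omega_g}\circ I_g$ is an invertible $\mathcal{B}$-linear endomorphism of $V_A^\mathcal{B}$, i.e. an element of $\textup{GL}_\mathcal{B}(V_A^\mathcal{B})$.

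The only genuinely delicate point is to make precise the sense in which $I_g$ is invertible, since $g^{-1}\cdot h$ and the twist of $\psi_A^\mathcal{B}$ to $\psi_{gA}^\mathcal{B}$ must be tracked carefully between $V_A^\mathcal{B}$ and $V_{gA}^\mathcal{B}$. Concretely I would observe that replacing $g$ by $g^{-1}$ and $A$ by $gA$ yields a map $I_{g^{-1}} : V_{gA}^\mathcal{B} \to V_{g^{-1}gA}^\mathcal{B} = V_A^\mathcal{B}$ (using $\psi_{g^{-1}gA}^\mathcal{B} = \psi_A^\mathcal{B}$), and that $I_{g^{-1}}\circ I_g = \textup{id}$ and $I_g\circ I_{g^{-1}} = \textup{id}$ by the cocycle-type identity $I_{g_1}\circ I_{g_2} = I_{g_1 g_2}$, which is immediate from the definition $(I_g f)(h) = f(g^{-1}\cdot h)$ and associativity of the $\textup{Sp}(W)$-action on $H$. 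This establishes $I_g \in \textup{Iso}_\mathcal{B}(V_A^\mathcal{B}, V_{gA}^\mathcal{B})$.

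With that in hand the conclusion is immediate: by Proposition \ref{intertwining_maps_models_A1_A2_prop} the hypothesis that $\mu$ is invertible is exactly what forces $I_{gA,A,\mu,\omega_g}$ to be an isomorphism of $\mathcal{B}[H]$-modules (its image being $I_\mu V_A^\mathcal{B} = \mathcal{B}\cdot V_A^\mathcal{B} = V_A^\mathcal{B}$ and its kernel being zero), hence in particular a $\mathcal{B}$-linear isomorphism $V_{gA}^\mathcal{B}\to V_A^\mathcal{B}$. The composite $I_{gA,A,\mu,\omega_g}\circ I_g : V_A^\mathcal{B}\to V_A^\mathcal{B}$ is then a composite of $\mathcal{B}$-module isomorphisms, so it lies in $\textup{GL}_\mathcal{B}(V_A^\mathcal{B})$, as claimed. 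I do not expect any serious obstacle here; the statement is essentially a bookkeeping corollary of the preceding proposition, the only care needed being the identification $\psi_{g^{-1}gA}^\mathcal{B} = \psi_A^\mathcal{B}$ so that $I_{g^{-1}}$ is genuinely an inverse rather than merely a map into an isomorphic but a priori different module.
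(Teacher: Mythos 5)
Your argument is correct and follows essentially the same route as the paper, which simply asserts that $I_g$ is invertible and then cites Proposition \ref{intertwining_maps_models_A1_A2_prop} to conclude that $I_{gA,A,\mu,\omega_g}$ is an isomorphism when $\mu$ is invertible, so that the composite lies in $\textup{GL}_\mathcal{B}(V_A^\mathcal{B})$. The only thing you add beyond the paper's terse statement is the explicit verification that $I_{g^{-1}}$ is a two-sided inverse of $I_g$ (via the identity $I_{g_1}\circ I_{g_2}=I_{g_1g_2}$ and the compatibility $\psi^\mathcal{B}_{g^{-1}(gA)}=\psi^\mathcal{B}_A$), which is a correct and useful filling-in of an unstated detail rather than a different method.
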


As a result of the lemma, the image of the set $\{ I_{gA,A,\mu,\omega_g} \circ I_g \ | \ \mu \textup{ invertible} \}$ through the quotient map:
$$\textup{\textsc{red}} : \textup{GL}_\mathcal{B} (V_A^\mathcal{B}) \to \textup{GL}_\mathcal{B} (V_A^\mathcal{B})/\mathcal{B}^\times = \textup{PGL}_\mathcal{B}(V_A^\mathcal{B})$$
is well defined. As already mentioned the map $I_{gA,A,\mu,\omega_g} \circ I_g$ is unique up to a scalar, hence this image consists in a singleton; denote by $M_g$ the single element it contains. Remark that $M_g$ does not depend on the choice of $\omega_g$ because $\textup{Hom}_{\mathcal{B}[H]}(V_{gA}^\mathcal{B},V_A^\mathcal{B}) \simeq \mathcal{B}$ once again by Propositon \ref{intertwining_maps_models_A1_A2_prop}, and the set of invertible elements are those in $\mathcal{B}^\times$, which does correspond to the choice of an invertible Haar measure.

The proposition below allows to build Weil representations with coefficients in $\mathcal{B}$.

\begin{prop} \label{weil_rep_def_prop} The map $\sigma_\mathcal{B} : g \in \textup{Sp}(W) \mapsto M_g \in \textup{PGL}_\mathcal{B}(V_A^\mathcal{B})$ is a group morphism and defines a projective representation $V_A^\mathcal{B}$ of $\textup{Sp}(W)$. Using the fibre product construction, it lifts to a representation $\omega_{\psi^\mathcal{B},V_A^\mathcal{B}}$ of a central extension of $\textup{Sp}(W)$ by $\mathcal{B}^\times$ in the following way:
$$\xymatrix{
		\widetilde{\textup{Sp}}_{\psi^\mathcal{B},V_A^\mathcal{B}}^\mathcal{B}(W) \ar[r]^{\omega_{\psi^\mathcal{B},V_A^\mathcal{B}}} \ar[d]_{p_\mathcal{B}} & \textup{GL}_\mathcal{B}(V_A^\mathcal{B})  \ar[d]^{\textup{\textsc{red}}} \\
		\textup{Sp}(W) \ar[r]^{\sigma_\mathcal{B}} & \textup{PGL}_\mathcal{B}(V_A^\mathcal{B})}$$
where $\widetilde{\textup{Sp}}_{\psi^\mathcal{B},V_A^\mathcal{B}}^\mathcal{B}(W)=\textup{Sp}(W) \times_{\textup{PGL}_\mathcal{B}(V_A^\mathcal{B})} \textup{GL}_\mathcal{B}(V_A^\mathcal{B})$ is the fibre product defined by the group morphisms $\sigma_\mathcal{B}$ and $\textup{\textsc{red}}$, together with the projection maps denoted $p_\mathcal{B}$ and $ \omega_{\psi^\mathcal{B},V_A^\mathcal{B}}$. \end{prop}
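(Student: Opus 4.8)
The plan is to prove the three assertions in order, the first being the heart of the matter. Fix throughout, for each $g \in \textup{Sp}(W)$, an invertible Haar measure $\mu$ and an $\omega_g$ as in Section \ref{changing_models_section}, and set $N_g = I_{gA,A,\mu,\omega_g} \circ I_g \in \textup{GL}_\mathcal{B}(V_A^\mathcal{B})$. By the lemma just preceding the statement this operator is invertible, it represents $M_g$ in the sense that $M_g = \textup{\textsc{red}}(N_g)$, and any other admissible choice of $\mu$ and $\omega_g$ changes $N_g$ only by a unit of $\mathcal{B}$, which is precisely the ambiguity that $\textup{\textsc{red}}$ forgets.

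\emph{Step 1: $\sigma_\mathcal{B}$ is a group morphism.} The key input is the intertwining relation recalled before the statement: $N_g$ carries the $H$-module structure $\rho_d$ on $V_A^\mathcal{B}$ to a $g$-twisted one, i.e. $N_g\, \rho_d(h)\, N_g^{-1}$ equals $\rho_d$ evaluated at the translate of $h$ by the automorphism of $H$ attached to $g$; here the only care needed is to pin down the variance so that $g \mapsto (h \mapsto g\cdot h)$ is a genuine homomorphism. Granting this, both $N_g N_{g'}$ and $N_{gg'}$ conjugate $\rho_d(h)$ to $\rho_d$ evaluated at the same $(gg')$-translate of $h$, so $N_{gg'}^{-1} N_g N_{g'}$ commutes with every $\rho_d(h)$, hence lies in $\textup{End}_{\mathcal{B}[H]}(V_A^\mathcal{B})$. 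By Schur's lemma for $V_A^\mathcal{B}$, namely Proposition \ref{metaplectic_representations_prop} c), this endomorphism is a scalar $\lambda \in \mathcal{B}$, and being a composite of invertible operators it is invertible, so $\lambda \in \mathcal{B}^\times$. Applying $\textup{\textsc{red}}$ gives $M_g M_{g'} = M_{gg'}$ in $\textup{PGL}_\mathcal{B}(V_A^\mathcal{B})$, and together with $M_1 = \textup{id}$ this shows $\sigma_\mathcal{B}$ is a group morphism. Alternatively, instead of Schur one can invoke Proposition \ref{intertwining_maps_models_A1_A2_prop} after identifying the $g$-twisted module with the standard model $V_{gA}^\mathcal{B}$ attached to the self-dual subgroup $gA$.

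Since a projective representation of $\textup{Sp}(W)$ on $V_A^\mathcal{B}$ is by definition a group morphism $\textup{Sp}(W) \to \textup{PGL}_\mathcal{B}(V_A^\mathcal{B})$, Step 1 already gives the projective representation. It remains to treat the lift. Put $\widetilde{\textup{Sp}}_{\psi^\mathcal{B},V_A^\mathcal{B}}^\mathcal{B}(W) = \{ (g,u) \in \textup{Sp}(W) \times \textup{GL}_\mathcal{B}(V_A^\mathcal{B}) \ : \ \sigma_\mathcal{B}(g) = \textup{\textsc{red}}(u) \}$; since $\sigma_\mathcal{B}$ and $\textup{\textsc{red}}$ are group morphisms this is a subgroup of the product. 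The first projection $p_\mathcal{B}(g,u) = g$ is surjective, as one may take $u = N_g$, and its kernel is $\{1\} \times \ker(\textup{\textsc{red}}) = \{1\} \times \mathcal{B}^\times$, the scalar operators, which is central in $\textup{Sp}(W) \times \textup{GL}_\mathcal{B}(V_A^\mathcal{B})$ hence in the subgroup; thus $\widetilde{\textup{Sp}}_{\psi^\mathcal{B},V_A^\mathcal{B}}^\mathcal{B}(W)$ is a central extension of $\textup{Sp}(W)$ by $\mathcal{B}^\times$. The second projection $\omega_{\psi^\mathcal{B},V_A^\mathcal{B}}(g,u) = u$ is a group morphism to $\textup{GL}_\mathcal{B}(V_A^\mathcal{B})$, i.e. an honest linear representation, and the square commutes because the identity $\textup{\textsc{red}} \circ \omega_{\psi^\mathcal{B},V_A^\mathcal{B}} = \sigma_\mathcal{B} \circ p_\mathcal{B}$ is exactly the defining relation of the fiber product.

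Everything outside Step 1 is formal. The one point demanding care, and the main obstacle, is the twisted-module bookkeeping inside Step 1: one must track precisely how $N_g$ conjugates the $H$-action, fixing once and for all the left/right conventions for the action of $\textup{Sp}(W)$ on $H$ and the induced variance of $I_g$ and of $I_{gA,A,\mu,\omega_g}$, so that the twists attached to $g$ and $g'$ genuinely compose to the twist attached to $gg'$; only then does the Schur argument (or Proposition \ref{intertwining_maps_models_A1_A2_prop}) close the proof.
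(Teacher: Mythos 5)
Your argument is correct, and it is a genuinely different route from the paper's. Where you reduce $\sigma_\mathcal{B}(g)\sigma_\mathcal{B}(g')=\sigma_\mathcal{B}(gg')$ to Schur's lemma for $V_A^\mathcal{B}$ (Proposition~\ref{metaplectic_representations_prop}~c), the paper's own proof is entirely explicit: it pushes $I_g$ past the intertwiner $I_{g'A,A,\mu_{g'},\omega_{g'}}$ via a computed relation $I_g\circ I_{g'A,A,\mu_{g'},\omega_{g'}}=I_{gg'A,gA,\mu,\omega}\circ I_g$, then composes the two intertwiners using Proposition~\ref{intertwining_maps_models_A1_A2_prop} to land on a representative $I_{gg'A,A,\mu_{gg'},\omega_{gg'}}\circ I_{gg'}$ of $M_{gg'}$. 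Your route is shorter and cleaner for the bare group-morphism statement; the paper's buys explicit control over $\mu_{gg'}$ and $\omega_{gg'}$, which it reuses later when normalising the cocycle in Section~\ref{reduced_cocycle_over_B_section}. You also correctly identify the alternative of invoking Proposition~\ref{intertwining_maps_models_A1_A2_prop} directly, which is essentially a compressed form of the paper's argument.

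Your instinct to single out the variance of the twisted-module relation as the one delicate point is well-placed, and worth making explicit rather than ``granting''. With the action of $\textup{Sp}(W)$ on $H$ by $g\cdot(w,t)=(gw,t)$ and $(I_gf)(h)=f(g^{-1}\cdot h)$, the correct intertwining law is
$$N_g\,\rho_d(h)\,N_g^{-1}=\rho_d(g\cdot h),$$
not $\rho_d(g^{-1}\cdot h)$ as the display preceding the proposition has it --- that version would make $g\mapsto M_g$ an anti-homomorphism, since $N_gN_{g'}$ would then conjugate to $\rho_d(g^{-1}g'^{-1}\cdot h)$ while $N_{gg'}$ conjugates to $\rho_d(g'^{-1}g^{-1}\cdot h)$. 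One checks the $g$-variance directly: $(I_g\rho_d(h_0)f)(h)=f\bigl((g^{-1}\cdot h)h_0\bigr)$, while $(\rho_d(h_1)I_gf)(h)=f\bigl((g^{-1}\cdot h)(g^{-1}\cdot h_1)\bigr)$, so $h_1=g\cdot h_0$, and the operator $I_{gA,A,\mu,\omega_g}$ is an honest $H$-map so contributes nothing to the twist. With this fixed, $N_gN_{g'}$ and $N_{gg'}$ both conjugate $\rho_d(h)$ to $\rho_d\bigl((gg')\cdot h\bigr)$, $N_{gg'}^{-1}N_gN_{g'}$ lies in $\textup{End}_{\mathcal{B}[H]}(V_A^\mathcal{B})=\mathcal{B}$, is invertible, hence a unit, and the rest of your formal fiber-product reasoning goes through as you wrote it.
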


\begin{proof} The only point that needs explanation is the claim about $\sigma_\mathcal{B}$ being a group morphism. Let $g$ and $g'$ be two elements in $\textup{Sp}(W)$. By definition, there exists an invertible measure $\mu_g$ on $gA \cap A \backslash A$ and an element $\omega_g \in W$ such that:
$$\textup{\textsc{red}}(I_{gA,A,\mu_{g},\omega_g} \circ I_g) = M_g.$$
Respectively, one can write the same type of relation for $M_{g'}$ with some $\mu_{g'}$ and $\omega_{g'}$.

An explicit computation of the composed map $I_g \circ I_{g'A,A,\mu_{g'},\omega_{g'}}$ gives the existence of an invertible measure $\mu$ on $g g' A \cap gA \backslash gA$ and an element $\omega \in W$ such that the commutation relation:
$$I_g \circ I_{g'A,A,\mu_{g'},\omega_{g'}} = I_{gg'A,gA,\mu,\omega} \circ I_g$$
holds. In addition, the morphism:
$$I_{gA,A,\mu_g,\omega} \circ I_{gg'A,gA,\mu,\omega} \in \textup{Hom}_{\mathcal{B}[H]}(V_{g g'A}^\mathcal{B}, \mathcal{V}_A^\mathcal{B})$$
is invertible because each one of the two is. Therefore Proposition \ref{intertwining_maps_models_A1_A2_prop} asserts the existence of an invertible measure $\mu_{g g'}$ on $A \cap g g' A \backslash g g' A$ and an element $\omega_{g g'} \in W$ such that:
$$ I_{gA,A,\mu_g,\omega} \circ I_{gg'A,gA,\mu,\omega} = I_{gA,A,\mu_{g g'},\omega_{g g'}}.$$
The claim hence follows by using the previous two relations and applying $\textup{\textsc{red}}$ to:
$$(I_{gA,A,\mu_{g},\omega_g} \circ I_g) \circ (I_{g'A,A,\mu_{g'},\omega_{g'}} \circ I_{g'}).$$ \end{proof}

\begin{rem} \label{fibre_product_topology_rem} Actually this fibre product makes sense in the category of topological groups in the following setting. Let $\mathcal{B}$ and $V_A^\mathcal{B}$ be endowed with the discrete topology. Then the compact-open topology on $\textup{GL}_\mathcal{B}(V_A^\mathcal{B})$ is generated by the prebasis of open sets $S_{s,s'} = \{ g \in \textup{GL}_\mathcal{B}(V_A^\mathcal{B}) \ | \ gs=s' \}$ for $s$ and $s'$ in $V_A^\mathcal{B}$. Similarly to \cite[Prop. 3.5]{trias_theta1}, one can prove $\textup{\textsc{red}}$ and $\sigma_\mathcal{B}$ are morphisms of topological groups. As a result of the continuity, the fibre product is a locally profinite group for the product topology and the representation $\omega_{\psi^\mathcal{B},V_A^\mathcal{B}}$ is smooth. However, there is an interesting alternative way to prove it and that is developed in the next section. It illustrates the philosophy: any problem related to an $\mathcal{A}$-algebra $\mathcal{B}$ may be brought back to one directly involving $\mathcal{A}$. \end{rem}

Denote by $\phi_\mathcal{B} : \textup{GL}_\mathcal{A}(V_A^\mathcal{A}) \to \textup{GL}_\mathcal{B}(V_A^\mathcal{B})$ the group morphism induced by the extension of scalars and the canonical identification $V_A^\mathcal{A} \otimes_\mathcal{A} \mathcal{B} \simeq V_A^\mathcal{B}$ coming from Corollary \ref{metaplectic_representations_cor}.

\begin{theo} \label{weil_rep_compatib_PhiB_thm} The group morphism $\phi_\mathcal{B}$ induces a morphism of central extensions:
$$\widetilde{\phi_\mathcal{B}} : (g,M) \in \widetilde{\textup{Sp}}_{\psi^\mathcal{A},V_A^\mathcal{A}}^\mathcal{A}(W) \mapsto (g,\phi_\mathcal{B}(M)) \in \widetilde{\textup{Sp}}_{\psi^\mathcal{B},V_A^\mathcal{B}}^\mathcal{B}(W).$$
The image of $\widetilde{\phi_\mathcal{B}}$ is a central extension of $\textup{Sp}(W)$ by $\varphi(\mathcal{A})^\times$ where $\varphi$ is the structure morphism $\varphi : \mathcal{A} \to \mathcal{B}$. Furthemore, the following diagram commutes:
$$\xymatrix{
		\widetilde{\textup{Sp}}_{\psi^\mathcal{A},V_A^\mathcal{A}}^\mathcal{A}(W) \ar[r]^{\omega_{\psi^\mathcal{A},V_A^\mathcal{A}}} \ar[d]^{\widetilde{\phi_\mathcal{B}}} & \textup{GL}_\mathcal{A}(V_A^\mathcal{A})  \ar[d]^{\phi_\mathcal{B}} \\
		\widetilde{\textup{Sp}}_{\psi^\mathcal{B},V_A^\mathcal{B}}^\mathcal{B}(W) \ar[r]^{\omega_{\psi^\mathcal{B},V_A^\mathcal{B}}} & \textup{GL}_\mathcal{B}(V_A^\mathcal{B})
		}.$$ \end{theo}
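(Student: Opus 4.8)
The plan is to recognise $\widetilde{\phi_\mathcal{B}}$ as the map induced on the two fibre products by the pair $(\textup{id}_{\textup{Sp}(W)},\phi_\mathcal{B})$, so that, apart from one compatibility between intertwiners, everything is formal. Since $\phi_\mathcal{B}$ carries the homothety $\lambda\,\textup{id}$ (for $\lambda\in\mathcal{A}^\times$) to $\varphi(\lambda)\,\textup{id}$, it descends to a morphism $\overline{\phi_\mathcal{B}}\colon\textup{PGL}_\mathcal{A}(V_A^\mathcal{A})\to\textup{PGL}_\mathcal{B}(V_A^\mathcal{B})$, and the key step is to prove $\overline{\phi_\mathcal{B}}\circ\sigma_\mathcal{A}=\sigma_\mathcal{B}$. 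Fix $g\in\textup{Sp}(W)$, an invertible $\mathcal{A}$-valued Haar measure $\mu^\mathcal{A}$ on $gA\cap A\backslash A$ and an $\omega_g\in W$ as in Section~\ref{changing_models_section}; then $\mu^\mathcal{B}=\varphi\circ\mu^\mathcal{A}$ is invertible, Corollary~\ref{intertwining_maps_models_A1_A2_cor} gives $I_{gA,A,\mu^\mathcal{B},\omega_g}=\phi_\mathcal{B}(I_{gA,A,\mu^\mathcal{A},\omega_g})$, and $I_g$, being mere precomposition by the action of $g^{-1}$ on $H$, is the scalar extension of its $\mathcal{A}$-version under the identification $V_A^\mathcal{B}\simeq V_A^\mathcal{A}\otimes_\mathcal{A}\mathcal{B}$ of Corollary~\ref{metaplectic_representations_cor}. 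Hence $I_{gA,A,\mu^\mathcal{B},\omega_g}\circ I_g$ is the scalar extension of $I_{gA,A,\mu^\mathcal{A},\omega_g}\circ I_g$, and applying $\textsc{red}$ yields $\sigma_\mathcal{B}(g)=\overline{\phi_\mathcal{B}}(\sigma_\mathcal{A}(g))$.

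Granting this, if $(g,M)$ lies in $\widetilde{\textup{Sp}}_{\psi^\mathcal{A},V_A^\mathcal{A}}^\mathcal{A}(W)$, i.e. $\textsc{red}(M)=\sigma_\mathcal{A}(g)$, then $\textsc{red}(\phi_\mathcal{B}(M))=\overline{\phi_\mathcal{B}}(\textsc{red}(M))=\overline{\phi_\mathcal{B}}(\sigma_\mathcal{A}(g))=\sigma_\mathcal{B}(g)$, so $(g,\phi_\mathcal{B}(M))$ lies in $\widetilde{\textup{Sp}}_{\psi^\mathcal{B},V_A^\mathcal{B}}^\mathcal{B}(W)$ and $\widetilde{\phi_\mathcal{B}}$ is well defined. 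It is a group morphism because both of its components are, it satisfies $p_\mathcal{B}\circ\widetilde{\phi_\mathcal{B}}=p_\mathcal{A}$, and on the central subgroup $\lambda\mapsto(1,\lambda\,\textup{id})$ it restricts to $\lambda\mapsto(1,\varphi(\lambda)\,\textup{id})$, so it is a morphism of central extensions; the final square commutes tautologically, since $\omega_{\psi^\mathcal{A},V_A^\mathcal{A}}$ and $\omega_{\psi^\mathcal{B},V_A^\mathcal{B}}$ are the fibre-product projections onto the $\textup{GL}$-factor.

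For the image, $p_\mathcal{B}$ restricts to a surjection $\textup{Im}(\widetilde{\phi_\mathcal{B}})\to\textup{Sp}(W)$ (as $p_\mathcal{A}$ is surjective), with kernel $\textup{Im}(\widetilde{\phi_\mathcal{B}})\cap\ker(p_\mathcal{B})$. An element of the latter is $(1,\phi_\mathcal{B}(M))$ with $\textsc{red}(M)=\sigma_\mathcal{A}(1)=1$, so by Schur's lemma (Proposition~\ref{metaplectic_representations_prop}~c)) $M=\lambda\,\textup{id}$ with $\lambda\in\mathcal{A}^\times$; conversely $(1,\lambda\,\textup{id})$ does lie in $\widetilde{\textup{Sp}}_{\psi^\mathcal{A},V_A^\mathcal{A}}^\mathcal{A}(W)$. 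So this kernel is the image of $\mathcal{A}^\times$ in $\mathcal{B}^\times$; since $\varphi$ factors as $\mathcal{A}\twoheadrightarrow\mathcal{A}/\mathcal{I}\hookrightarrow\mathcal{B}$, identifying it with $(\mathcal{A}/\mathcal{I})^\times$ amounts precisely to the surjectivity of the reduction map $\mathcal{A}^\times\to(\mathcal{A}/\mathcal{I})^\times$ (equivalently, to the statement that $\widetilde{\phi_\mathcal{B}}$ factors through an isomorphism onto the metaplectic group over $\mathcal{A}/\mathcal{I}$ sitting inside $\widetilde{\textup{Sp}}_{\psi^\mathcal{B},V_A^\mathcal{B}}^\mathcal{B}(W)$).

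The main obstacle is this last surjectivity, which is where the arithmetic of $\mathcal{A}$ enters. The case $\mathcal{I}=0$ is trivial; if $\mathcal{I}\neq0$, a nonzero $a\in\mathcal{I}$ has a monic minimal polynomial over $\mathbb{Z}[\frac1p]$ whose nonzero constant term lies in $\mathcal{I}$, and after clearing the power of $p$ from its denominator one obtains a nonzero integer $m\in\mathcal{I}$ prime to $p$. Then $\mathcal{A}/\mathcal{I}$ is a quotient of $\mathcal{A}/m\mathcal{A}=\mathcal{O}_\mathcal{K}/m\mathcal{O}_\mathcal{K}$, and since $m$ is prime to $p$ the relevant cyclotomic polynomials are separable modulo $m$, so $\mathcal{O}_\mathcal{K}/m\mathcal{O}_\mathcal{K}$ is a finite product of finite local rings when $\textup{char}(F)>0$ and a filtered colimit of such when $\textup{char}(F)=0$; units lift along surjections of such rings, hence $\mathcal{A}^\times\to(\mathcal{O}_\mathcal{K}/m\mathcal{O}_\mathcal{K})^\times\to(\mathcal{A}/\mathcal{I})^\times$ is onto. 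I expect the colimit bookkeeping in the characteristic-$0$ case to be the only genuinely delicate point.
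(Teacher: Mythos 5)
Your treatment of well-definedness, of the morphism-of-central-extensions structure, and of the commuting square is, up to repackaging, the paper's own argument: you isolate the identity $\overline{\phi_\mathcal{B}}\circ\sigma_\mathcal{A}=\sigma_\mathcal{B}$ as a lemma, while the paper writes $M=I_{gA,A,\mu,\omega}\circ I_g$ and applies $\phi_\mathcal{B}$ directly via Corollary~\ref{intertwining_maps_models_A1_A2_cor}; both hinge on the same compatibility $\phi_\mathcal{B}(I_{gA,A,\mu,\omega})=I_{gA,A,\mu^\mathcal{B},\omega}$ together with the observation that $I_g$ commutes with scalar extension, and the computation of the central fibre via $I_{A,A,\mu,0}=\mu(\{0\})\,\textup{Id}$ is the same.

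Where you genuinely diverge is the assertion about the image, which the paper dispatches in a single sentence (``a direct consequence of this kernel's form''). You are right that what is really needed there is the surjectivity of $\mathcal{A}^\times\to(\mathcal{A}/\mathcal{I})^\times$: the kernel computation by itself only identifies the central part of $\textup{Im}(\widetilde{\phi_\mathcal{B}})$ with the \emph{image} of $\mathcal{A}^\times$ in $(\mathcal{A}/\mathcal{I})^\times$. But your argument for that surjectivity has a gap. The fact ``units lift along surjections of such rings'' does apply to $(\mathcal{O}_\mathcal{K}/m\mathcal{O}_\mathcal{K})^\times\to(\mathcal{A}/\mathcal{I})^\times$, a surjection of finite products of finite local rings (or a filtered colimit thereof); it says nothing about the first arrow $\mathcal{A}^\times\to(\mathcal{O}_\mathcal{K}/m\mathcal{O}_\mathcal{K})^\times$, which is where the content lies and which is simply not surjective in general. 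Indeed $\mathcal{A}^\times$ is a Dirichlet $S$-unit group ($S$ the primes over $p$), so a finitely generated abelian group of bounded minimal number of generators, whereas $(\mathcal{O}_\mathcal{K}/m\mathcal{O}_\mathcal{K})^\times$ acquires arbitrarily large $\ell$-rank as $m$ accumulates prime factors. Concretely, for $p=3$ one has $\mathcal{A}=\mathbb{Z}[\zeta_3,\frac{1}{3}]$ with $\mathcal{A}^\times\simeq\mu_6\times\mathbb{Z}$ (two generators); taking $m=35$, one finds $(\mathcal{O}_\mathcal{K}/35\mathcal{O}_\mathcal{K})^\times\simeq\mathbb{Z}/24\times\mathbb{Z}/6\times\mathbb{Z}/6$, which has $3$-rank equal to $3$, so no $2$-generated subgroup can be all of it. Setting $\mathcal{B}=\mathcal{A}/35\mathcal{A}$ with the quotient map then produces an $\mathcal{A}$-algebra for which $\mathcal{A}^\times\to(\mathcal{A}/\mathcal{I})^\times$ is \emph{not} onto. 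So the ``main obstacle'' you flagged is not just delicate but fails as stated: what the argument (yours and the paper's alike) actually yields is that $\textup{Im}(\widetilde{\phi_\mathcal{B}})$ is a central extension of $\textup{Sp}(W)$ by $\varphi(\mathcal{A}^\times)\simeq\mathcal{A}^\times/\ker\big(\mathcal{A}^\times\to(\mathcal{A}/\mathcal{I})^\times\big)$, a subgroup of $(\mathcal{A}/\mathcal{I})^\times$ that can be proper. You should read the theorem's $(\mathcal{A}/\mathcal{I})^\times$ as shorthand for this image rather than as the full unit group of the quotient ring, and not try to force the surjectivity.
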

		
\begin{proof} By definition $(g,M) \in \textup{Sp}(W) \times \textup{GL}_\mathcal{A}(V_A^\mathcal{A})$ belongs to $\widetilde{\textup{Sp}}_{\psi^\mathcal{A},V_A^\mathcal{A}}^\mathcal{A}(W)$ if there exists an invertible Haar measure $\mu$ on $gA \cap A \backslash A$ with values in $\mathcal{A}$ and an element $\omega$ such that $M = I_{gA,A,\mu,\omega} \circ I_g$. Set $\mu^\mathcal{B} = \varphi \circ \mu$. Using the compatibiliy of Corollary \ref{intertwining_maps_models_A1_A2_cor}, the equality $\phi_\mathcal{B}(I_{gA,A,\mu,\omega})=I_{gA,A,\mu^\mathcal{B},\omega}$ holds and defines an isomorphism in $\textup{Hom}_{\mathcal{B}[H]}(V_{gA}^\mathcal{B},V_A^\mathcal{B})$. Hence:
$$\phi_\mathcal{B}(M) = I_{gA,A,\mu^\mathcal{B},\omega} \circ I_g$$
with $\mu^\mathcal{B}$ invertible, that is $(g,\phi_\mathcal{B}(M)) \in \widetilde{\textup{Sp}}_{\psi^\mathcal{B},V_A^\mathcal{B}}^\mathcal{B}(W)$.

The map $\widetilde{\phi_\mathcal{B}}$ thus defined clearly is a morphism of central extensions. In addition, an element $(g,M)$ belongs to its kernel if and only if $g = \textup{Id}_W$ and $\phi_\mathcal{B}(M)=\textup{Id}_{V_A^\mathcal{B}}$. However:
$$\{M \in \textup{GL}_\mathcal{A}(V_A^\mathcal{A}) \ | \ (\textup{Id}_W,M) \in \widetilde{\textup{Sp}}_{\psi^\mathcal{A},V_A^\mathcal{A}}^\mathcal{A}(W) \} = \{ \lambda \textup{Id}_{V_A^\mathcal{A}} \ | \ \lambda \in \mathcal{A}^\times\}.$$
Indeed $M$  must be of the form $I_{gA,A,\mu,\omega} \circ I_g = I_{A,A,\mu,0} = \mu(\{0\}) \times \textup{Id}_{V_A^\mathcal{A}}$ where $\mu$ is an invertible measure of the singleton $\{0\}$, so there exists $\lambda \in \mathcal{B}^\times$ such that $M = \lambda \textup{Id}_{V_A^\mathcal{A}}$. Since $\phi_\mathcal{B}(\lambda \textup{Id}_{V_A^\mathcal{A}}) = \varphi(\lambda) \textup{Id}_{V_A^\mathcal{B}}$, the group $\{(\textup{Id}_W,\lambda \textup{Id}_{V_A^\mathcal{A}}) \ | \  \lambda \in \textup{Ker}(\varphi) \} \simeq \textup{Ker}(\varphi)$ is the kernel sought. The assertion on the image follows from the form of this kernel. \end{proof}

Because of the previous compatibility, many problems over $\mathcal{B}$ reduce to those over the minimal ring $\mathcal{A}$. The corollary to the proposition below illustrates this philosophy:

\begin{prop} Let $A$ and $A'$ be two self-dual subgroups of $W$. Let $\Phi_{A,A'}$ be an isomorphism in $\textup{Hom}_{\mathcal{A}[H]}(V_A^\mathcal{A},V_{A'}^\mathcal{A})$. Then $\Phi_{A,A'}$ induces an isomorphism of central extensions:
$$(g,M) \in \widetilde{\textup{Sp}}_{\psi^\mathcal{A},V_A^\mathcal{A}}^\mathcal{A}(W) \mapsto (g,\Phi_{A,A'} M \Phi_{A,A'}^{-1}) \in \widetilde{\textup{Sp}}_{\psi^\mathcal{A},V_{A'}^\mathcal{A}}^\mathcal{A}(W)$$
compatible with the projections defining the fibre products. In particular, the equivalence class of the representation $\omega_{\psi^\mathcal{A},V_A^\mathcal{A}}$ does not on depend $A$ in the sense that:
$$\Phi_{A,A'} \circ \omega_{\psi^\mathcal{A},V_A^\mathcal{A}}((g,M)) \circ \Phi_{A,A'}^{-1}=\omega_{\psi^\mathcal{A},V_{A'}^\mathcal{A}}((g,\Phi_{A,A'} M \Phi_{A,A'}^{-1}))$$
for all $(g,M) \in \widetilde{\textup{Sp}}_{\psi^\mathcal{A},V_A^\mathcal{A}}^\mathcal{A}(W)$. \end{prop}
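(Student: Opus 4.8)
The plan is to reduce the whole statement to the compatibility $\sigma_{\mathcal{A}}^{A'} = \overline{c}_\Phi \circ \sigma_{\mathcal{A}}^{A}$ of projective representations, where I write $\sigma_{\mathcal{A}}^{A}$ and $\sigma_{\mathcal{A}}^{A'}$ for the two morphisms $\textup{Sp}(W) \to \textup{PGL}$ produced by Proposition~\ref{weil_rep_def_prop} from $V_A^{\mathcal{A}}$ and $V_{A'}^{\mathcal{A}}$, and where $\overline{c}_\Phi : \textup{PGL}_{\mathcal{A}}(V_A^{\mathcal{A}}) \to \textup{PGL}_{\mathcal{A}}(V_{A'}^{\mathcal{A}})$ is induced by the conjugation isomorphism $c_\Phi : M \mapsto \Phi_{A,A'} M \Phi_{A,A'}^{-1}$. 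Since $c_\Phi$ sends $\lambda\,\textup{Id}$ to $\lambda\,\textup{Id}$, it does descend to $\overline{c}_\Phi$ on the quotients by the scalars, and the pair $(c_\Phi, \overline{c}_\Phi)$ fits into a commuting square with the two reduction maps $\textup{\textsc{red}}$. Writing $M_g = \sigma_{\mathcal{A}}^{A}(g)$ and $M_g' = \sigma_{\mathcal{A}}^{A'}(g)$, the point to establish is $\overline{c}_\Phi(M_g) = M_g'$ for every $g \in \textup{Sp}(W)$.

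For that I would fix $g$, pick an invertible Haar measure $\mu$ on $gA \cap A \backslash A$ and an $\omega_g \in W$ as in Section~\ref{weil_over_A_algebra_section}, so $R_g := I_{gA,A,\mu,\omega_g} \circ I_g$ represents $M_g$ in $\textup{GL}_{\mathcal{A}}(V_A^{\mathcal{A}})$. The one genuine computation is that
$$\Psi := I_g \circ \Phi_{A,A'} \circ I_g^{-1} : V_{gA}^{\mathcal{A}} \longrightarrow V_{gA'}^{\mathcal{A}}$$
is a morphism of $\mathcal{A}[H]$-modules: this drops out of the twisted equivariance $I_g(h_0 \cdot f) = (g^{-1}\cdot h_0)\cdot I_g f$ recalled in Section~\ref{weil_over_A_algebra_section} combined with the $H$-equivariance of $\Phi_{A,A'}$. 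Granting it, one rewrites
$$\Phi_{A,A'}\, R_g\, \Phi_{A,A'}^{-1} = \bigl(\Phi_{A,A'} \circ I_{gA,A,\mu,\omega_g} \circ \Psi^{-1}\bigr) \circ I_g ,$$
where the factor in brackets is an invertible morphism of $\mathcal{A}[H]$-modules $V_{gA'}^{\mathcal{A}} \to V_{A'}^{\mathcal{A}}$, being a composite of three such. As $gA'$ and $A'$ are self-dual and $V_{gA'}^{\mathcal{A}}$, $V_{A'}^{\mathcal{A}}$ are exactly the associated standard models, Proposition~\ref{intertwining_maps_models_A1_A2_prop} identifies $\textup{Hom}_{\mathcal{A}[H]}(V_{gA'}^{\mathcal{A}}, V_{A'}^{\mathcal{A}})$ with $\mathcal{A}$ and says its invertible elements are the $\mathcal{A}^\times$-multiples of any $I_{gA',A',\mu',\omega'}$ attached to an invertible measure; hence $\Phi_{A,A'} \circ I_{gA,A,\mu,\omega_g} \circ \Psi^{-1} = \lambda\, I_{gA',A',\mu',\omega'}$ for some $\lambda \in \mathcal{A}^\times$ and suitable $\mu', \omega'$. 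Thus $\Phi_{A,A'} R_g \Phi_{A,A'}^{-1} = \lambda\, I_{gA',A',\mu',\omega'} \circ I_g$ represents $M_g'$, and applying $\textup{\textsc{red}}$ yields $\overline{c}_\Phi(M_g) = M_g'$. This compatibility of projective representations is the main obstacle; the rest is bookkeeping whose content is already contained in Proposition~\ref{intertwining_maps_models_A1_A2_prop}.

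It remains to package this into the fiber products. The map $(\textup{id}_{\textup{Sp}(W)}, c_\Phi) : \textup{Sp}(W) \times \textup{GL}_{\mathcal{A}}(V_A^{\mathcal{A}}) \to \textup{Sp}(W) \times \textup{GL}_{\mathcal{A}}(V_{A'}^{\mathcal{A}})$ is a group isomorphism, and by the previous paragraph it carries the defining relation $\sigma_{\mathcal{A}}^{A}(g) = \textup{\textsc{red}}(M)$ of $\widetilde{\textup{Sp}}_{\psi^{\mathcal{A}},V_A^{\mathcal{A}}}^{\mathcal{A}}(W)$ to the relation $\sigma_{\mathcal{A}}^{A'}(g) = \textup{\textsc{red}}(c_\Phi(M))$ defining $\widetilde{\textup{Sp}}_{\psi^{\mathcal{A}},V_{A'}^{\mathcal{A}}}^{\mathcal{A}}(W)$; hence it restricts to a group isomorphism between the two fiber products, compatible with the projections $p_{\mathcal{A}}$ to $\textup{Sp}(W)$, with inverse induced by $c_{\Phi_{A,A'}^{-1}}$. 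Because $c_\Phi$ fixes scalars, this restriction is the identity on the central kernels $\{(\textup{Id}_W, \lambda\,\textup{Id})\} \simeq \mathcal{A}^\times$ of $p_{\mathcal{A}}$, so it is an isomorphism of central extensions. Finally, since $\omega_{\psi^{\mathcal{A}},V_A^{\mathcal{A}}}$ and $\omega_{\psi^{\mathcal{A}},V_{A'}^{\mathcal{A}}}$ are by construction the projections of the respective fiber products onto the $\textup{GL}$-factor, evaluating on $(g,M)$ — whose image under the isomorphism is $(g, c_\Phi(M))$ — turns the displayed identity of the statement into the tautology $\Phi_{A,A'} M \Phi_{A,A'}^{-1} = c_\Phi(M)$, which closes the argument.
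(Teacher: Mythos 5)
Your proof is correct and follows the same route the paper intends; the paper's own proof is essentially the one-line assertion that the isomorphism of central extensions is ``quite clear when writing down the relations because $\Phi_{A,A'}$ is an isomorphism of $\mathcal{A}[H]$-modules,'' and your argument — the $H$-equivariance of $\Psi = I_g \circ \Phi_{A,A'} \circ I_g^{-1}$, the identification of $\Phi_{A,A'} \circ I_{gA,A,\mu,\omega_g} \circ \Psi^{-1}$ as an invertible standard intertwiner via Proposition~\ref{intertwining_maps_models_A1_A2_prop}, and the resulting compatibility $\overline{c}_\Phi \circ \sigma_\mathcal{A}^A = \sigma_\mathcal{A}^{A'}$ — is precisely what ``writing down the relations'' amounts to.
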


\begin{proof} The existence of an isomorphism in $\textup{Hom}_{\mathcal{A}[H]}(V_A^\mathcal{A},V_{A'}^\mathcal{A})$ is a consequence of Proposition \ref{intertwining_maps_models_A1_A2_prop}. One can consider for example any $I_{A,A',\mu,\omega}$ as long as $\mu$ is invertible. The fact that $\Phi_{A,A'}$ induces an isomorphism of central extensions is quite clear when writing down the relations because $\Phi_{A,A'}$ is an isomorphism of $\mathcal{A}[H]$-modules. \end{proof}

From Theorem \ref{weil_rep_compatib_PhiB_thm} and the proposition above, one can deduce the exact same result for coefficients in any $\mathcal{A}$-algebra $\mathcal{B}$. Indeed, applying $\phi_\mathcal{B}$ to the last relation yields:

\begin{cor} \label{canonical_identif_central_ext_B_cor} The equivalence class of $\omega_{\psi^\mathcal{B},V_A^\mathcal{B}}$ does not depend on $A$, in the sense that for any other self-dual subgroup $A'$ of $W$, there exists an isomorphism $\Phi_{A,A'}'$ in $\textup{Hom}_{\mathcal{B}[H]}(V_A^\mathcal{B},V_{A'}^\mathcal{B})$  -- one can take $\phi_\mathcal{B} (\Phi_{A,A'})$ for example -- such that:
$$\Phi_{A,A'}' \circ \omega_{\psi^\mathcal{B},V_A^\mathcal{B}}((g,M)) \circ (\Phi_{A,A'}')^{-1}=\omega_{\psi^\mathcal{B},V_A^\mathcal{B}}((g, \Phi_{A,A'}' M (\Phi_{A,A'}')^{-1}))$$
for all $(g,M) \in \widetilde{\textup{Sp}}_{\psi^\mathcal{B},V_{A'}^\mathcal{B}}^\mathcal{B}(W)$. \end{cor}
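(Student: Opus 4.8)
The plan is to transport the $\mathcal{A}$-level statement just proved along the scalar-extension morphism $\phi_\mathcal{B}$ — exactly the mechanism announced in the sentence preceding this corollary — and then to promote the resulting identity from the image of $\widetilde{\phi_\mathcal{B}}$ to all of $\widetilde{\textup{Sp}}_{\psi^\mathcal{B},V_A^\mathcal{B}}^\mathcal{B}(W)$ by exploiting the central $\mathcal{B}^\times$.

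First I would fix an isomorphism $\Phi_{A,A'} \in \textup{Hom}_{\mathcal{A}[H]}(V_A^\mathcal{A},V_{A'}^\mathcal{A})$, say some $I_{A,A',\mu^\mathcal{A},\omega}$ with $\mu^\mathcal{A}$ invertible (which exists by Proposition \ref{intertwining_maps_models_A1_A2_prop}), and set $\Phi'_{A,A'} = \phi_\mathcal{B}(\Phi_{A,A'})$; under the canonical identifications of Corollary \ref{metaplectic_representations_cor} this is an isomorphism in $\textup{Hom}_{\mathcal{B}[H]}(V_A^\mathcal{B},V_{A'}^\mathcal{B})$ with inverse $\phi_\mathcal{B}(\Phi_{A,A'}^{-1})$. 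The Proposition just proved provides, for every $(g,M) \in \widetilde{\textup{Sp}}_{\psi^\mathcal{A},V_A^\mathcal{A}}^\mathcal{A}(W)$, the membership $(g,\Phi_{A,A'} M \Phi_{A,A'}^{-1}) \in \widetilde{\textup{Sp}}_{\psi^\mathcal{A},V_{A'}^\mathcal{A}}^\mathcal{A}(W)$; the accompanying intertwining identity is then automatic there, since $\omega_{\psi^\mathcal{A},V_A^\mathcal{A}}$ and $\omega_{\psi^\mathcal{A},V_{A'}^\mathcal{A}}$ are merely the second fiber-product projections. Next I would apply the functor $\phi_\mathcal{B}$, which is multiplicative on morphisms, so that $\phi_\mathcal{B}(\Phi_{A,A'} M \Phi_{A,A'}^{-1}) = \Phi'_{A,A'}\,\phi_\mathcal{B}(M)\,(\Phi'_{A,A'})^{-1}$, and use the commuting square of Theorem \ref{weil_rep_compatib_PhiB_thm}, i.e. $\omega_{\psi^\mathcal{B},V_A^\mathcal{B}}\circ\widetilde{\phi_\mathcal{B}} = \phi_\mathcal{B}\circ\omega_{\psi^\mathcal{A},V_A^\mathcal{A}}$ together with its $A'$-analogue. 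This gives that $(g,\Phi'_{A,A'}\,\phi_\mathcal{B}(M)\,(\Phi'_{A,A'})^{-1})$ lies in $\widetilde{\textup{Sp}}_{\psi^\mathcal{B},V_{A'}^\mathcal{B}}^\mathcal{B}(W)$, hence the asserted relation, for every element $\widetilde{\phi_\mathcal{B}}(g,M) = (g,\phi_\mathcal{B}(M))$ of the image of $\widetilde{\phi_\mathcal{B}}$.

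It then remains to treat an arbitrary $(g,N) \in \widetilde{\textup{Sp}}_{\psi^\mathcal{B},V_A^\mathcal{B}}^\mathcal{B}(W)$. Since each $M_g$ can be represented with a normalised, hence $\mathcal{A}$-invertible, Haar measure, the composite $p_\mathcal{B}\circ\widetilde{\phi_\mathcal{B}}$ is onto $\textup{Sp}(W)$; so one may choose $(g,M) \in \widetilde{\textup{Sp}}_{\psi^\mathcal{A},V_A^\mathcal{A}}^\mathcal{A}(W)$ over the same $g$, and because $\textsc{red}(N) = \sigma_\mathcal{B}(g) = \textsc{red}(\phi_\mathcal{B}(M))$ there is $\lambda \in \mathcal{B}^\times$ with $N = \lambda\,\phi_\mathcal{B}(M)$. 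Scalars being central and commuting with conjugation by $\Phi'_{A,A'}$, the pair $(g,\Phi'_{A,A'} N (\Phi'_{A,A'})^{-1}) = (g,\lambda\,\Phi'_{A,A'}\,\phi_\mathcal{B}(M)\,(\Phi'_{A,A'})^{-1})$ is the central $\lambda$-translate of an element already known to lie in $\widetilde{\textup{Sp}}_{\psi^\mathcal{B},V_{A'}^\mathcal{B}}^\mathcal{B}(W)$, hence lies there too; this makes conjugation by $\Phi'_{A,A'}$ a genuine isomorphism of central extensions $\widetilde{\textup{Sp}}_{\psi^\mathcal{B},V_A^\mathcal{B}}^\mathcal{B}(W) \to \widetilde{\textup{Sp}}_{\psi^\mathcal{B},V_{A'}^\mathcal{B}}^\mathcal{B}(W)$ compatible with the $p_\mathcal{B}$'s, and since the $\omega$'s are the second projections the claimed identity then holds on the nose. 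I expect the main obstacle to be precisely this last step: because $\widetilde{\phi_\mathcal{B}}$ is not surjective — its image realises only $(\mathcal{A}/\mathcal{I})^\times$ in the centre, not $\mathcal{B}^\times$ — the transported $\mathcal{A}$-identity a priori does not see the full metaplectic group over $\mathcal{B}$, and one genuinely needs the surjectivity of $p_\mathcal{B}\circ\widetilde{\phi_\mathcal{B}}$ together with the $\mathcal{B}^\times$-equivariance to bridge the gap. All remaining verifications — multiplicativity of $\phi_\mathcal{B}$ on the $\textup{Hom}$-spaces, that it sends $\Phi_{A,A'}$ to the canonical $\Phi'_{A,A'}$, and the centrality of $\mathcal{B}^\times$ — are routine.
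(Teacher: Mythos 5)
Your proof is correct and follows the same route as the paper: push the $\mathcal{A}$-level conjugation identity from the preceding proposition forward along $\phi_\mathcal{B}$, using the commuting square of Theorem~\ref{weil_rep_compatib_PhiB_thm}. The paper's own argument is the one-liner ``applying $\phi_\mathcal{B}$ to the last relation yields'' the corollary; you have, usefully, spelled out the step the paper leaves implicit --- that this only directly covers $\operatorname{Im}(\widetilde{\phi_\mathcal{B}})$ (a central extension by $(\mathcal{A}/\mathcal{I})^\times$, not $\mathcal{B}^\times$), and one must then use surjectivity of $p_\mathcal{B}\circ\widetilde{\phi_\mathcal{B}}$ onto $\textup{Sp}(W)$ together with centrality of $\mathcal{B}^\times$ (writing a general element as $\lambda\,\phi_\mathcal{B}(M)$) to reach the whole of $\widetilde{\textup{Sp}}_{\psi^\mathcal{B},V_A^\mathcal{B}}^\mathcal{B}(W)$.
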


\section{The metaplectic group over $\mathcal{A}$}

The notations are those of Section \ref{weil_over_A_algebra_section}. To quickly recall the context: let $\mathcal{B}$ be an $\mathcal{A}$-algebra, let $A$ be a self-dual subgroup of $W$ and $V_A^\mathcal{B}=\textup{ind}_{A_H}^H(\psi_A^\mathcal{B})$ be the smooth $\mathcal{B}[H]$-module built in Section \ref{models_associated_to_self_dual_subgp_section}, where $\psi_A^\mathcal{B}$ is an extension of $\psi^\mathcal{B}$ in the way of Lemma \ref{extension_of_psi_B_lem}.

In Section \ref{weil_over_A_algebra_section}, we constructed a projective representation $\sigma_\mathcal{B} : \textup{Sp}(W) \to  \textup{PGL}_\mathcal{B}(V_A^\mathcal{B})$ of the symplectic group and, in Proposition \ref{weil_rep_def_prop}, we lifted it to a representation $(\omega_{\psi^\mathcal{B},V_A^\mathcal{B}},V_A^\mathcal{B})$ of a central extension of $\textup{Sp}(W)$ by $\mathcal{B}^\times$, namely:
$$\omega_{\psi^\mathcal{B},V_A^\mathcal{B}} : \widetilde{\textup{Sp}}_{\psi^\mathcal{B},V_A^\mathcal{B}}^\mathcal{B}(W) \to \textup{GL}_\mathcal{B}(V_A^\mathcal{B}).$$
Recall that the group on the left-hand side is the fibre product in the category of groups of the group morphisms $\sigma_\mathcal{B} : \textup{Sp}(W) \to \textup{PGL}_\mathcal{B}(V_A^\mathcal{B})$ and $\textup{\textsc{red}} : \textup{GL}_\mathcal{B}(V_A^\mathcal{B}) \to \textup{PGL}_\mathcal{B}(V_A^\mathcal{B})$, together with the projection maps $p_\mathcal{B}$ and $\omega_{\psi^\mathcal{B},V_A^\mathcal{B}}$. As a result of this construction, it is a subgroup of $\textup{Sp}(W) \times  \textup{GL}_\mathcal{B}(V_A^\mathcal{B})$. In particular, these constructions make sense over $\mathcal{A}$ itself, and Theorem \ref{weil_rep_compatib_PhiB_thm} completes the picture relating the constructions over $\mathcal{A}$ and over any $\mathcal{A}$-algebra $\mathcal{B}$, yielding a morphism of central extensions:
$$\widetilde{\phi_\mathcal{B}} :  \widetilde{\textup{Sp}}_{\psi^\mathcal{A},V_A^\mathcal{A}}^\mathcal{A}(W) \to \widetilde{\textup{Sp}}_{\psi^\mathcal{B},V_A^\mathcal{B}}^\mathcal{B}(W)$$
compatible with the respective projection maps.

\subsection{A bit of tolopolgy} \label{a_bit_of_top_section}

This section will shed some light on Remark \ref{fibre_product_topology_rem} by bringing topology into the construction of Proposition \ref{weil_rep_def_prop}. Endow $\mathcal{B}$ and $V_A^\mathcal{B}$ with the discrete topology. Then the open-compact topology on $\textup{GL}_\mathcal{B}(V_A^\mathcal{B})$ is generated by the prebasis $S_{s,s'} = \{ M \in \textup{GL}_\mathcal{B}(V_A^\mathcal{B}) \ | \ Ms =s' \}$ for $s$ and $s'$ running through $V_A^\mathcal{B}$.

The group $\textup{PGL}_\mathcal{B}(V_A^\mathcal{B})$ inherits the quotient topology, which is the finest making the quotient map $\textup{\textsc{red}} : \textup{GL}_\mathcal{B}(V_A^\mathcal{B}) \to \textup{PGL}_\mathcal{B}(V_A^\mathcal{B})$ continuous. Recall from Proposition \ref{weil_rep_compatib_PhiB_thm} that the projective representation $\sigma_\mathcal{B} : \textup{Sp}(W) \to  \textup{PGL}_\mathcal{B}(V_A^\mathcal{B})$ was defined in terms of the action of $\textup{Sp}(W)$ on $H$.

\paragraph{The complex case.} The best-known feature comes when $\mathcal{B}$ is the field of complex numbers. Endowing $\mathbb{C}$ with a structure of $\mathcal{A}$-algebra amounts to fixing an embedding $\varphi : \mathcal{A} \to \mathbb{C}$. Observe that all such embeddings have the same image in $\mathbb{C}$, because $\mathcal{K}  / \mathbb{Q}$ is a Galois extension. In particular, the image of the map $\mathcal{A}^\times \to \mathbb{C}^\times$ induced by $\varphi$ does not depend on the choice of $\varphi$.

So when $\mathcal{B}=\mathbb{C}$ and $\varphi$ is fixed, the representation $V_A^\mathbb{C} \in \textup{Rep}_\mathbb{C}(H)$ is irreducible as an application of Stone-von Neumann's theorem \cite[Chap. 2, Th. I.2]{mvw} and : 
\begin{itemize}[label=$\bullet$]
\item $\omega_{\psi^\mathbb{C},V_A^\mathbb{C}}$ is the Weil representation of the metaplectic group $\widetilde{\textup{Sp}}_{\psi^\mathbb{C},V_A^\mathbb{C}}^\mathbb{C}(W)$.
\end{itemize}
The complex theory asserts that the Weil representation is smooth and the metaplectic group is a natural topological subgroup of $\textup{Sp}(W) \times \textup{GL}_\mathbb{C}(V_A^\mathbb{C})$. To be more precise, the metaplectic group is a locally profinite group. Regarding the smoothness condition, this is equivalent to saying that the map $\omega_{\psi^\mathbb{C},V_A^\mathbb{C}}$ is continuous.

These topological properties are consequences of the continuity of the map $\sigma_\mathbb{C}$, which really is the cornerstone of the theory; and the metaplectic group inherits a natural topology as the fibre product in the category of topological groups of the continuous group morphisms $\textup{\textsc{red}}$ and $\sigma_\mathbb{C}$.

\paragraph{Over $\mathcal{A}$.} By analogy, one calls $\widetilde{\textup{Sp}}_{\psi^\mathcal{A},V_A^\mathcal{A}}^\mathcal{A}(W)$ the metaplectic group over $\mathcal{A}$. Referring to Theorem \ref{weil_rep_compatib_PhiB_thm}, it is a subgroup of the metaplectic group because the group morphism:
$$\widetilde{\phi_\mathbb{C}} : (g,M) \in \widetilde{\textup{Sp}}_{\psi^\mathcal{A},V_A^\mathcal{A}}^\mathcal{A}(W) \to (g,\phi_\mathbb{C}(M)) \in \widetilde{\textup{Sp}}_{\psi^\mathbb{C},V_A^\mathbb{C}}^\mathbb{C}(W)$$
is injective. 

\begin{lem} \label{map_phiC_homeo_lem} The map $\phi_\mathbb{C} : M \in \textup{GL}_\mathcal{A}(V_A^\mathcal{A}) \to \phi_\mathbb{C} (M) \in \textup{GL}_\mathbb{C}(V_A^\mathbb{C})$, coming from the scalar extension to $\mathbb{C}$, is continuous and defines an homeomorphism onto its image. \end{lem}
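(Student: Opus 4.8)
The plan is to argue directly from the definitions of the compact--open topologies, exploiting that both $V_A^\mathcal{A}$ and $V_A^\mathbb{C}$ carry the discrete topology, so that ``compact'' means ``finite''. The algebraic input is the canonical identification $V_A^\mathbb{C} \simeq V_A^\mathcal{A} \otimes_\mathcal{A} \mathbb{C}$ of Corollary~\ref{metaplectic_representations_cor}: I will write $\bar f = \varphi \circ f \in V_A^\mathbb{C}$ for the image of $f \in V_A^\mathcal{A}$, and record two facts to be used repeatedly. First, since $\varphi$ is an embedding, the map $f \mapsto \bar f$ is injective, and every $t \in V_A^\mathbb{C}$ can be written as a \emph{finite} sum $t = \sum_i c_i \bar f_i$ with $c_i \in \mathbb{C}$, $f_i \in V_A^\mathcal{A}$ (every element of a tensor product is a finite sum of simple tensors, and by the formula of Corollary~\ref{metaplectic_representations_cor} a simple tensor $f \otimes c$ is sent to $c\,\bar f$). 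Under this identification $\phi_\mathbb{C}(M) = M \otimes_\mathcal{A} \textup{id}_\mathbb{C}$, so $\phi_\mathbb{C}(M)\big(\sum_i c_i \bar f_i\big) = \sum_i c_i \overline{Mf_i}$. Second, for each fixed $f \in V_A^\mathcal{A}$ the evaluation map $M \mapsto Mf$ from $\textup{GL}_\mathcal{A}(V_A^\mathcal{A})$ to the discrete space $V_A^\mathcal{A}$ is continuous, because its fibre over $u$ is the subbasic open set $S_{f,u} = \{M : Mf = u\}$. It is also immediate that $\phi_\mathbb{C}$ is injective, since $\phi_\mathbb{C}(M_1) = \phi_\mathbb{C}(M_2)$ forces $\overline{M_1 f} = \overline{M_2 f}$, hence $M_1 f = M_2 f$, for all $f$.

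For continuity, I would check that $\phi_\mathbb{C}^{-1}(S_{t,t'})$ is open for every subbasic open $S_{t,t'} = \{N \in \textup{GL}_\mathbb{C}(V_A^\mathbb{C}) : Nt = t'\}$. Expanding $t = \sum_{i=1}^n c_i \bar f_i$, the condition $\phi_\mathbb{C}(M) t = t'$ becomes $\sum_{i=1}^n c_i \overline{Mf_i} = t'$, which depends only on the tuple $(Mf_1,\dots,Mf_n)$. Thus $\phi_\mathbb{C}^{-1}(S_{t,t'})$ is the preimage of a subset of the discrete space $(V_A^\mathcal{A})^n$ under the continuous map $M \mapsto (Mf_1,\dots,Mf_n)$, hence open; so $\phi_\mathbb{C}$ is continuous.

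For the homeomorphism onto the image, since $\phi_\mathbb{C}$ is injective it commutes with finite intersections and with arbitrary unions of subsets, so it suffices to show that $\phi_\mathbb{C}$ carries each subbasic open $S_{f,u}$ of $\textup{GL}_\mathcal{A}(V_A^\mathcal{A})$ to a relatively open subset of the image. Using injectivity of $g \mapsto \bar g$ one sees $M \in S_{f,u} \Longleftrightarrow \phi_\mathbb{C}(M)\bar f = \bar u$, and therefore $\phi_\mathbb{C}(S_{f,u}) = S_{\bar f, \bar u} \cap \phi_\mathbb{C}\big(\textup{GL}_\mathcal{A}(V_A^\mathcal{A})\big)$, which is open in the subspace topology because $S_{\bar f, \bar u}$ is subbasic open in $\textup{GL}_\mathbb{C}(V_A^\mathbb{C})$. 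Combined with injectivity and continuity, this shows $\phi_\mathbb{C}$ is a homeomorphism onto its image.

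I do not expect a genuine obstacle here; the content is entirely the bookkeeping above. The one point to be careful with is to represent a general vector of $V_A^\mathbb{C}$ as a \emph{finite} $\mathbb{C}$-linear combination of images $\bar f_i$ of elements of $V_A^\mathcal{A}$ --- rather than fixing a $\mathbb{C}$-basis --- so that every condition on $M$ appearing in the argument factors through a finite tuple $(Mf_1,\dots,Mf_n) \in (V_A^\mathcal{A})^n$; discreteness of that finite product then makes the relevant preimages and images automatically open, and no finiteness or freeness property of $V_A^\mathcal{A}$ is needed.
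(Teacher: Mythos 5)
Your proof is correct and takes essentially the same approach as the paper's: the key step in both is the computation $\phi_\mathbb{C}(S_{f,u}) = S_{\bar f,\bar u} \cap \phi_\mathbb{C}(\textup{GL}_\mathcal{A}(V_A^\mathcal{A}))$, showing that $\phi_\mathbb{C}$ carries subbasic opens to relatively open sets of its image. You additionally spell out continuity and injectivity (the finite-tuple argument and the injectivity of $f \mapsto \varphi\circ f$), which the paper simply asserts without detail.
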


\begin{proof} The image of $\phi_\mathbb{C}$ is endowed with the subspace topology from $\textup{GL}_\mathbb{C}(V_A^\mathbb{C})$. The map $\phi_\mathbb{C}$ is continuous and injective, so it defines a bijection to its image, say $G_\mathcal{A}$. Denote $\phi_\mathbb{C}' : G_\mathcal{A} \to \textup{GL}_\mathcal{A}(V_A^\mathcal{A})$ the inverse map. Then for all $s$ and $s'$ in $V_A^\mathcal{A}$, one has:
\begin{eqnarray*} (\phi_\mathbb{C}')^{-1} (S_{s,s'}) &=& \{ \phi_\mathbb{C} (M) \ | \ M \in \textup{GL}_\mathcal{A}(V_A^\mathcal{A}) \textup{ and } \phi_\mathbb{C}(M) (s \otimes_\mathbb{C}1)=s'\otimes_\mathbb{C} 1\} \\
 &=&  G_\mathcal{A} \cap S_{s\otimes_\mathbb{C}1,s'\otimes_\mathbb{C}1} \end{eqnarray*}
that is the trace of an open set. So $(\phi_\mathbb{C}')^{-1} (S_{s,s'})$ is open in $G_\mathcal{A}$ and $\phi_\mathbb{C}'$ is continuous. \end{proof}

Of course, the embedding $\textup{Sp}(W) \times \textup{GL}_\mathcal{A}(V_A^\mathcal{A}) \to \textup{Sp}(W) \times \textup{GL}_\mathbb{C}(V_A^\mathbb{C})$ induced by $\phi_\mathbb{C}$ is an homeomorphism onto its image as well. As a result of the lemma, the subspace topology on $\textup{Sp}(W) \times \textup{GL}_\mathcal{A}(V_A^\mathcal{A})$, inherited from that of $\textup{Sp}(W) \times \textup{GL}_\mathbb{C}(V_A^\mathbb{C})$ using the previous embedding, coincides with the usual product topology. Restricting this morphism to the metaplectic group over $\mathcal{A}$, which is a subgoup of $\textup{Sp}(W) \times \textup{GL}_\mathcal{A}(V_A^\mathcal{A})$, exactly yields $\widetilde{\phi_\mathbb{C}}$. Because of the homeomorphism property, one can identify the metaplectic group over $\mathcal{A}$ and its image under $\widetilde{\phi_\mathbb{C}}$, resulting in:

\begin{cor} \label{met_gp_over_A_over_C_cor} The group $\widetilde{\textup{Sp}}_{\psi^\mathcal{A},V_A^\mathcal{A}}^\mathcal{A}(W)$ is a topological subgroup of $\widetilde{\textup{Sp}}_{\psi^\mathbb{C},V_A^\mathbb{C}}^\mathbb{C}(W)$, the inclusion being canonically given by $\widetilde{\phi_\mathbb{C}}$. In addition $\widetilde{\phi_\mathbb{C}}$ is an open embedding. \end{cor}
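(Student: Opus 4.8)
The plan is to observe that the ``topological embedding'' half of the corollary has, in effect, already been carried out in the paragraph preceding it, so that the only genuinely new point is the \emph{openness} of the image of $\widetilde{\phi_\mathbb{C}}$ inside $\widetilde{\textup{Sp}}_{\psi^\mathbb{C},V_A^\mathbb{C}}^\mathbb{C}(W)$.

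For the embedding part I would simply recall the argument: Lemma~\ref{map_phiC_homeo_lem} says $\phi_\mathbb{C}$ is a homeomorphism onto its image, hence so is the induced map $\textup{id}_{\textup{Sp}(W)}\times\phi_\mathbb{C}$ on $\textup{Sp}(W)\times\textup{GL}_\mathcal{A}(V_A^\mathcal{A})$; restricting it to the subgroup $\widetilde{\textup{Sp}}_{\psi^\mathcal{A},V_A^\mathcal{A}}^\mathcal{A}(W)\subseteq\textup{Sp}(W)\times\textup{GL}_\mathcal{A}(V_A^\mathcal{A})$ gives exactly $\widetilde{\phi_\mathbb{C}}$, which is therefore a homeomorphism onto its image. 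So $\widetilde{\textup{Sp}}_{\psi^\mathcal{A},V_A^\mathcal{A}}^\mathcal{A}(W)$ identifies, through $\widetilde{\phi_\mathbb{C}}$, with a topological subgroup of $\widetilde{\textup{Sp}}_{\psi^\mathbb{C},V_A^\mathbb{C}}^\mathbb{C}(W)$.

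For openness, the first thing I would record is that both kernels $\ker p_\mathcal{A}=\mathcal{A}^\times\textup{Id}_{V_A^\mathcal{A}}$ and $\ker p_\mathbb{C}=\mathbb{C}^\times\textup{Id}_{V_A^\mathbb{C}}$ are \emph{discrete} subgroups of the respective metaplectic groups: picking any nonzero $v$ in the torsion-free module $V_A^\mathcal{A}$ (resp.\ in the $\mathbb{C}$-vector space $V_A^\mathbb{C}$), the prebasic open set $S_{v,v}$ meets the scalars only in $\{\textup{Id}\}$, so the scalars inherit the discrete topology. Consequently $\textsc{red}$ is the quotient morphism by a discrete central subgroup, hence an open map and a covering map; and by Remark~\ref{fiber_product_topology_rem} (together with the analogue of \cite[Prop.~3.5]{trias_theta1}) each metaplectic group is the topological fiber product of the continuous morphisms $\textsc{red}$ and $\sigma_\mathcal{B}$, so, coverings being stable under base change, the projections $p_\mathcal{A}$ and $p_\mathbb{C}$ are covering maps, in particular local homeomorphisms.

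The last step: choose an open neighbourhood $V$ of the identity in $\widetilde{\textup{Sp}}_{\psi^\mathcal{A},V_A^\mathcal{A}}^\mathcal{A}(W)$ on which $p_\mathcal{A}$ restricts to a homeomorphism onto an open neighbourhood $U$ of $\textup{Id}_W$, and set $\tau=\widetilde{\phi_\mathbb{C}}\circ(p_\mathcal{A}|_V)^{-1}\colon U\to\widetilde{\textup{Sp}}_{\psi^\mathbb{C},V_A^\mathbb{C}}^\mathbb{C}(W)$. Since $p_\mathbb{C}\circ\widetilde{\phi_\mathbb{C}}=p_\mathcal{A}$, this is a continuous section of $p_\mathbb{C}$ over the open set $U$, and $\tau(U)=\widetilde{\phi_\mathbb{C}}(V)$ lies inside $\widetilde{\phi_\mathbb{C}}\big(\widetilde{\textup{Sp}}_{\psi^\mathcal{A},V_A^\mathcal{A}}^\mathcal{A}(W)\big)$. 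Invoking the elementary fact that the image of a continuous section of a local homeomorphism over an open base set is open in the total space, one gets that $\tau(U)$ is an open neighbourhood of the identity contained in the subgroup $\widetilde{\phi_\mathbb{C}}\big(\widetilde{\textup{Sp}}_{\psi^\mathcal{A},V_A^\mathcal{A}}^\mathcal{A}(W)\big)$, which is therefore open. Hence $\widetilde{\phi_\mathbb{C}}$ is an open embedding. The step I expect to need the most care is the one producing that $p_\mathcal{A}$ and $p_\mathbb{C}$ are local homeomorphisms — i.e.\ combining discreteness of the kernels with ``quotient by a discrete central subgroup is a covering, and coverings pull back to coverings''; everything else is either formal or already supplied by Lemma~\ref{map_phiC_homeo_lem} and Remark~\ref{fiber_product_topology_rem}.
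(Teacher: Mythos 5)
Your proposal is correct, and the route you take for the openness claim differs genuinely from the paper's. The paper argues structurally: it identifies $\widetilde{\textup{Sp}}_{\psi^\mathcal{A},V_A^\mathcal{A}}^\mathcal{A}(W)$ with its image in $\widetilde{\textup{Sp}}_{\psi^\mathbb{C},V_A^\mathbb{C}}^\mathbb{C}(W)$, notes via the commuting diagram of central extensions (and the fact that $\varphi(\mathcal{A}^\times)\supseteq\{\pm1\}$) that this image contains the full derived group $\widehat{\textup{Sp}}_{\psi^\mathbb{C},V_A^\mathbb{C}}^\mathbb{C}(W)$, and then invokes the classical fact that the reduced metaplectic group is open in the metaplectic group; a subgroup containing an open subgroup is open, which gives the conclusion (with the finite-$F$ case dispatched by discreteness). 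Your argument instead works at the level of general covering-space topology: discreteness of the scalar kernels makes each $\textsc{red}$ a covering of topological groups, pullback along $\sigma_\mathcal{A}$ and $\sigma_\mathbb{C}$ makes $p_\mathcal{A}$ and $p_\mathbb{C}$ coverings (hence local homeomorphisms), and a continuous local section $\tau=\widetilde{\phi_\mathbb{C}}\circ(p_\mathcal{A}|_V)^{-1}$ of $p_\mathbb{C}$ over an open set has open image contained in the subgroup. What your version buys is independence from prior knowledge about the reduced metaplectic group: you do not need to know that $\widehat{\textup{Sp}}^\mathbb{C}$ is open, nor that it is the unique nontrivial extension by $\{\pm1\}$. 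What the paper's version buys is brevity and avoidance of the covering-theoretic machinery, at the price of leaning on an established structural fact about the complex metaplectic group. Both are valid; your version would in fact generalize more readily to settings where the structure of the derived subgroup is less well understood.

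One small point to tighten: you should state explicitly why the scalars are discrete not merely in $\textup{GL}_\mathbb{C}(V_A^\mathbb{C})$ but in $\textup{GL}_\mathcal{A}(V_A^\mathcal{A})$ — you gesture at torsion-freeness of $V_A^\mathcal{A}$, and that is exactly right since $V_A^\mathcal{A}$ is a module of $\mathcal{A}$-valued functions and $\mathcal{A}$ is a domain, so $(\lambda-1)f=0$ with $f\ne0$ forces $\lambda=1$. Also note that your appeal to Remark~\ref{fiber_product_topology_rem} for the topological fiber-product description and the continuity of $\sigma_\mathcal{A}$ and $\sigma_\mathbb{C}$ is non-circular, as that remark precedes the corollary in the paper and does not depend on it.
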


\begin{proof} The fact that it is a topological subgroup follows from Lemma \ref{map_phiC_homeo_lem} and the subsequent discussion. The map $\widetilde{\phi_\mathbb{C}}$ is open because its image is open in the metaplectic group. Indeeed the first projection of the fibre product yields an exact sequence:
$$1 \to \mathbb{C}^\times \to \widetilde{\textup{Sp}}_{\psi^\mathbb{C},V_A^\mathbb{C}}^\mathbb{C}(W) \to \textup{Sp}(W) \to 1.$$
Because $\mathcal{K}/\mathbb{Q}$ is a Galois extension, the image $\varphi(\mathcal{A}^\times)$ of $\mathcal{A}^\times$ does not depend on $\varphi$ and always contains $\{ \pm 1 \}$. As a result, the following diagram is commutative:
\begin{center} \begin{tikzcd} 
1  \ar[r] &  \mathbb{C}^\times \ar[r] & 	\widetilde{\textup{Sp}}_{\psi^\mathbb{C},V_A^\mathbb{C}}^\mathbb{C}(W) \ar[r] & \textup{Sp}(W)  \ar[r] & 1 \\
1 \ar[r] &  \mathcal{A}^\times \ar[u,"\varphi"] \ar[r] &  	\widetilde{\textup{Sp}}_{\psi^\mathcal{A},V_A^\mathcal{A}}^\mathcal{A}(W) \ar[u,"\widetilde{\phi_\mathbb{C}}"] \ar[r] & \textup{Sp}(W)  \ar[u,"\textup{Id}_{\textup{Sp}(W)}"] \ar[r] & 1 \end{tikzcd} \end{center}
and the group $\widetilde{\textup{Sp}}_{\psi^\mathcal{A},V_A^\mathcal{A}}^\mathcal{A}(W)$ contains the reduced metaplectic group $\widehat{\textup{Sp}}_{\psi^\mathbb{C},V_A^\mathbb{C}}^\mathbb{C}(W)$, that is the derived group of the metaplectic group.

When $F$ is local non-archimedean, this is the unique subgroup of the metaplectic group fitting into the exact sequence:
$$1 \to \{ \pm 1 \} \to \widehat{\textup{Sp}}_{\psi^\mathbb{C},V_A^\mathbb{C}}^\mathbb{C}(W) \to \textup{Sp}(W) \to 1.$$
Furthermore this reduced metaplectic group is open in the metatplectic group, so the claim follows because the metaplectic group over $\mathcal{A}$ contains it. When $F$ is finite, the topology can just be ignored as these groups are finite and have discrete topolgy. \end{proof}

As above, denote by $\widehat{\textup{Sp}}_{\psi^\mathbb{C},V_A^\mathbb{C}}^\mathbb{C}(W)$ the derived group of $\widetilde{\textup{Sp}}_{\psi^\mathbb{C},V_A^\mathbb{C}}^\mathbb{C}(W)$. When $F$ is finite, this group is the derived group of $\textup{Sp}(W)$. Except in the exceptional case $F=\mathbb{F}_3$ and $\textup{dim} (W)=2$, the symplectic group is perfect \textit{i.e.} equal to its own derived subgroup. When $F$ is local archimedean it is the so-called reduced metaplectic group, which is a non-trivial extension of $\textup{Sp}(W)$ by $\{ \pm 1 \}$. Actually there exists a unique such (open) subgroup in the metaplectic group. Regardless of what $F$ may be, we use brackets to define the derived group:
$$\widehat{\textup{Sp}}_{\psi^\mathcal{A},V_A^\mathcal{A}}^\mathcal{A}(W) = [\widetilde{\textup{Sp}}_{\psi^\mathcal{A},V_A^\mathcal{A}}^\mathcal{A}(W),\widetilde{\textup{Sp}}_{\psi^\mathcal{A},V_A^\mathcal{A}}^\mathcal{A}(W)].$$
Recall that $\widetilde{\phi_\mathbb{C}}$ canonically identifies $\widetilde{\textup{Sp}}_{\psi^\mathcal{A},V_A^\mathcal{A}}^\mathcal{A}(W)$ with its image in $\widetilde{\textup{Sp}}_{\psi^\mathbb{C},V_A^\mathbb{C}}^\mathbb{C}(W)$. It also induces, by restriction, a map between the respective derived groups.

\begin{prop} \label{metaplectic_group_over_A_prop} One has the following properties:
\begin{enumerate}[label=\textup{\alph*)}]
\item the map $\sigma_\mathcal{A}$ is continuous and $\widetilde{\textup{Sp}}_{\psi^\mathcal{A},V_A^\mathcal{A}}^\mathcal{A}(W)$ is the fibre product in the category of topological groups of the continuous morphisms $\textup{\textsc{red}}$ and $\sigma_\mathcal{A}$;
\item the representation $\omega_{\psi^\mathcal{A},V_A^\mathcal{A}} : \widetilde{\textup{Sp}}_{\psi^\mathcal{A},V_A^\mathcal{A}}^\mathcal{A}(W)  \to \textup{GL}_\mathcal{A}(V_A^\mathcal{A})$ is smooth as this group morphism is the second projection of the fibre product;
\item the group $\widetilde{\textup{Sp}}_{\psi^\mathcal{A},V_A^\mathcal{A}}^\mathcal{A}(W)$ is open in $\widetilde{\textup{Sp}}_{\psi^\mathbb{C},V_A^\mathbb{C}}^\mathbb{C}(W)$ and therefore the metaplectic group over $\mathcal{A}$ is locally profinite;
\item the map $\widetilde{\phi_\mathbb{C}}$ restricts to an isomorphism $\widehat{\textup{Sp}}_{\psi^\mathcal{A},V_A^\mathcal{A}}^\mathcal{A}(W) \simeq \widehat{\textup{Sp}}_{\psi^\mathbb{C},V_A^\mathbb{C}}^\mathbb{C}(W)$ and when:
\begin{enumerate} \item[\textup{i)}] $F$ is finite, it is the symplectic group except when $F =\mathbb{F}_3$ and $\textup{dim}(W)=2$;
\item[\textup{ii)}] $F$ is local non-archimedean, it is the reduced metaplectic group. \end{enumerate} 
\end{enumerate} \end{prop}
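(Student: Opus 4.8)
The plan is to reduce parts b), c), d) to Corollary~\ref{met_gp_over_A_over_C_cor} together with routine facts about locally profinite groups, and to put all the real work into part a), i.e.\ the continuity of $\sigma_\mathcal{A}$: once $\sigma_\mathcal{A}$ and $\textsc{red}$ are continuous, the set-theoretic fibre product $\widetilde{\textup{Sp}}_{\psi^\mathcal{A},V_A^\mathcal{A}}^\mathcal{A}(W)$ carrying the subspace topology from $\textup{Sp}(W)\times\textup{GL}_\mathcal{A}(V_A^\mathcal{A})$ satisfies the universal property of the fibre product in the category of topological groups (a continuous homomorphism into the product whose image lands in the fibre is continuous into the subspace), which is point a).

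For the continuity of $\sigma_\mathcal{A}$ I would use the embedding of Corollary~\ref{met_gp_over_A_over_C_cor}, in the spirit of Remark~\ref{fiber_product_topology_rem}: start from a continuous local section $s_\mathbb{C}$ of $p_\mathbb{C}$ on an open neighbourhood $U$ of $e$ in $\textup{Sp}(W)$, normalised so that $s_\mathbb{C}(e)$ is the identity (such sections of the classical metaplectic cover exist); since $\widetilde{\textup{Sp}}_{\psi^\mathcal{A},V_A^\mathcal{A}}^\mathcal{A}(W)$ is open in $\widetilde{\textup{Sp}}_{\psi^\mathbb{C},V_A^\mathbb{C}}^\mathbb{C}(W)$, the set $s_\mathbb{C}^{-1}\big(\widetilde{\textup{Sp}}_{\psi^\mathcal{A},V_A^\mathcal{A}}^\mathcal{A}(W)\big)$ is an open neighbourhood of $e$ on which $s_\mathbb{C}$ restricts to a continuous local section $s$ of $p_\mathcal{A}$. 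On that neighbourhood $\sigma_\mathcal{A}=\textsc{red}\circ\omega_{\psi^\mathcal{A},V_A^\mathcal{A}}\circ s$ is a composite of continuous maps, $\omega_{\psi^\mathcal{A},V_A^\mathcal{A}}$ being the second projection of a subspace of a product and $\textsc{red}$ a quotient map; being a group morphism, $\sigma_\mathcal{A}$ is then continuous everywhere. A self-contained alternative, modelled on \cite[Prop.~3.5]{trias_theta1}, is to take $A$ a self-dual lattice and observe that for $g$ in a small enough principal congruence subgroup one has $gA=A$ and $\psi_{gA}^\mathcal{A}=\psi_A^\mathcal{A}$, so $M_g=\textsc{red}(I_g)$, and then show that $I_g$ fixes any prescribed finite family of vectors of $V_A^\mathcal{A}$ once $g$ is close enough to $e$, using smoothness of those functions and compactness of their supports modulo $A_H$.

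Part b) is then immediate: $\omega_{\psi^\mathcal{A},V_A^\mathcal{A}}$ is the second projection of the fibre product, hence continuous for the compact-open topology attached to the discrete module $V_A^\mathcal{A}$, and for a discrete module continuity of a homomorphism into $\textup{GL}_\mathcal{A}(V_A^\mathcal{A})$ is precisely the openness of the stabiliser of every vector, i.e.\ smoothness. Part c) is immediate from Corollary~\ref{met_gp_over_A_over_C_cor}: $\widetilde{\phi_\mathbb{C}}$ being an open embedding, $\widetilde{\textup{Sp}}_{\psi^\mathcal{A},V_A^\mathcal{A}}^\mathcal{A}(W)$ is an open subgroup of the classical metaplectic group, which is locally profinite, and an open subgroup of a locally profinite group is locally profinite (when $F$ is finite all the groups are finite discrete).

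For part d), $\widetilde{\phi_\mathbb{C}}$ is an injective group morphism, so it restricts to an injection of $\widehat{\textup{Sp}}_{\psi^\mathcal{A},V_A^\mathcal{A}}^\mathcal{A}(W)$ into the derived group $\widehat{\textup{Sp}}_{\psi^\mathbb{C},V_A^\mathbb{C}}^\mathbb{C}(W)$; to see it is onto, write $G'=\widetilde{\phi_\mathbb{C}}\big(\widetilde{\textup{Sp}}_{\psi^\mathcal{A},V_A^\mathcal{A}}^\mathcal{A}(W)\big)$, so that $\widehat{\textup{Sp}}_{\psi^\mathbb{C},V_A^\mathbb{C}}^\mathbb{C}(W)\subseteq G'\subseteq\widetilde{\textup{Sp}}_{\psi^\mathbb{C},V_A^\mathbb{C}}^\mathbb{C}(W)$ by the proof of Corollary~\ref{met_gp_over_A_over_C_cor}. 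As $\widehat{\textup{Sp}}_{\psi^\mathbb{C},V_A^\mathbb{C}}^\mathbb{C}(W)$ is the derived subgroup of the ambient group, the quotient is abelian and $[G',G']\subseteq\widehat{\textup{Sp}}_{\psi^\mathbb{C},V_A^\mathbb{C}}^\mathbb{C}(W)$; and outside the exceptional case $\widehat{\textup{Sp}}_{\psi^\mathbb{C},V_A^\mathbb{C}}^\mathbb{C}(W)$ is perfect --- it is $\textup{Sp}(W)$ for $F$ finite (perfect unless $F=\mathbb{F}_3$, $\dim W=2$) and a nonsplit central extension of the perfect group $\textup{Sp}(W)$ by $\{\pm1\}$, hence perfect, for $F$ local non-archimedean --- so $\widehat{\textup{Sp}}_{\psi^\mathbb{C},V_A^\mathbb{C}}^\mathbb{C}(W)=[\widehat{\textup{Sp}}_{\psi^\mathbb{C},V_A^\mathbb{C}}^\mathbb{C}(W),\widehat{\textup{Sp}}_{\psi^\mathbb{C},V_A^\mathbb{C}}^\mathbb{C}(W)]\subseteq[G',G']$, giving $[G',G']=\widehat{\textup{Sp}}_{\psi^\mathbb{C},V_A^\mathbb{C}}^\mathbb{C}(W)$ and hence, by injectivity, the isomorphism $\widehat{\textup{Sp}}_{\psi^\mathcal{A},V_A^\mathcal{A}}^\mathcal{A}(W)\simeq\widehat{\textup{Sp}}_{\psi^\mathbb{C},V_A^\mathbb{C}}^\mathbb{C}(W)$; identifications i) and ii) of this group are the classical facts recalled before the statement, the exceptional case being postponed to Remark~\ref{exceptional_case_sigmaA_sigmaB_rem}. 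The main obstacle is part a): everything else is formal, and the transfer argument for the continuity of $\sigma_\mathcal{A}$ tacitly relies on the classical existence of a continuous local section of $p_\mathbb{C}$, which is exactly what the self-contained route would have to establish directly over $\mathcal{A}$.
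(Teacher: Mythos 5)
Your proof is correct overall, with parts~b) and~c) matching the paper's argument exactly, but parts~a) and~d) take genuinely different routes.

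For part~a), the paper argues directly: it observes $\sigma_\mathbb{C}=\overline{\phi_\mathbb{C}}\circ\sigma_\mathcal{A}$ where $\overline{\phi_\mathbb{C}}:\textup{PGL}_\mathcal{A}(V_A^\mathcal{A})\to\textup{PGL}_\mathbb{C}(V_A^\mathbb{C})$ is the map induced on quotients by $\phi_\mathbb{C}$, and deduces the continuity of $\sigma_\mathcal{A}$ from that of $\sigma_\mathbb{C}$ (tacitly using that $\overline{\phi_\mathbb{C}}$, like $\phi_\mathbb{C}$ in Lemma~\ref{map_phiC_homeo_lem}, is a topological embedding). You instead invoke a continuous local section of $p_\mathbb{C}$ near the identity, restrict it to the open subgroup $\widetilde{\textup{Sp}}_{\psi^\mathcal{A},V_A^\mathcal{A}}^\mathcal{A}(W)$ furnished by Corollary~\ref{met_gp_over_A_over_C_cor}, and write $\sigma_\mathcal{A}$ locally as $\textsc{red}\circ\omega_{\psi^\mathcal{A},V_A^\mathcal{A}}\circ s$. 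This works, and your self-contained alternative (à la \cite[Prop.~3.5]{trias_theta1}, with a self-dual lattice and a small congruence subgroup) is exactly what Remark~\ref{fiber_product_topology_rem} alludes to; but note that relying on a continuous local section of the complex cover is a slightly heavier input than the paper's: it presupposes the classical metaplectic extension is topologically locally trivial, whereas the paper only needs $\sigma_\mathbb{C}$ continuous and $\overline{\phi_\mathbb{C}}$ an embedding.

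For part~d), the paper factors through $p_\mathcal{A}$ and $p_\mathbb{C}$: it notes $D(p_\mathbb{C})\colon\widehat{\textup{Sp}}^\mathbb{C}\to[\textup{Sp}(W),\textup{Sp}(W)]$ is an isomorphism (finite case) or surjective with kernel $\{\pm1\}$ (local non-archimedean case), uses that $D(p_\mathcal{A})=D(p_\mathbb{C})\circ\widetilde{\phi_\mathbb{C}}$ is onto, bounds the index of $\widehat{\textup{Sp}}^\mathcal{A}$ inside $\widehat{\textup{Sp}}^\mathbb{C}$ by $2$, and rules out index $2$ via nonsplitness of the reduced cover. Your argument is cleaner: you observe $\widehat{\textup{Sp}}^\mathbb{C}\subseteq G'\subseteq\widetilde{\textup{Sp}}^\mathbb{C}$ where $G'=\widetilde{\phi_\mathbb{C}}(\widetilde{\textup{Sp}}^\mathcal{A})$, deduce $[G',G']\subseteq\widehat{\textup{Sp}}^\mathbb{C}$ from abelianity of the quotient, and then invoke the perfectness of $\widehat{\textup{Sp}}^\mathbb{C}$ (which you justify correctly: a nonsplit central extension of a perfect group by $\{\pm1\}$ is perfect) to get equality. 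This sidesteps the index bookkeeping and is arguably the more conceptual route. Both approaches are correct; the paper's is slightly more elementary in that it avoids the (standard but nontrivial) perfectness claim for the reduced metaplectic group.

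Two small remarks: you should say a word distinguishing the finite-field case (where everything is discrete and continuity is vacuous) from the local non-archimedean case in your argument for a); and note that your claim $[G',G']=\widetilde{\phi_\mathbb{C}}(\widehat{\textup{Sp}}^\mathcal{A})$ is using the injectivity of $\widetilde{\phi_\mathbb{C}}$ in the form $[\widetilde{\phi_\mathbb{C}}(H),\widetilde{\phi_\mathbb{C}}(H)]=\widetilde{\phi_\mathbb{C}}([H,H])$, which you should state explicitly in a final write-up.
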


\begin{proof} \textbf{a)} The map $\sigma_\mathcal{A}$ is continuous, because $\sigma_\mathbb{C}$ itself is, and one has:
$$\sigma_\mathcal{A} = \overline{\phi_\mathbb{C}} \circ \sigma_\mathbb{C}$$
where $\overline{\phi_\mathbb{C}} : \textup{PGL}_\mathcal{A}(V_A^\mathcal{A}) \to \textup{PGL}_\mathbb{C}(V_A^\mathbb{C})$ is the continuous group morphism defined from $\phi_\mathbb{C}$ by passing to the quotient. The fibre product of $\sigma_\mathcal{A}$ and $\textup{\textsc{red}}$ in the category of topological groups defines a topological subgroup of $\textup{Sp}(W) \times \textup{GL}_\mathcal{A}(V_A^\mathcal{A})$. In particular, this fibre product is, as a group, the metaplectic group over $\mathcal{A}$.

\noindent \textbf{b)} The projection maps are continuous by definition of the fibre product.

\noindent \textbf{c)} As a direct consequence of $\widetilde{\phi_\mathbb{C}}$ being an open embedding, the group $\widetilde{\textup{Sp}}_{\psi^\mathcal{A},V_A^\mathcal{A}}^\mathcal{A}(W)$ is an open subgroup of the metaplectic group, which is locally profinite. Hence it is a closed subgroup, so the subspace topology is the locally profinite one.

\noindent \textbf{d)} The isomorphism follows considering the first projection $p_\mathcal{A} : \widehat{\textup{Sp}}_{\psi^\mathcal{A},V_A^\mathcal{A}}^\mathcal{A}(W)  \to \textup{Sp}(W)$. This map is surjective, as well as $p_\mathbb{C}$. In addition one has the equality:
$$p_\mathbb{C} \circ \widetilde{\phi_\mathbb{C}}= p_\mathcal{A} .$$
Passing to derived groups yields:
$$D(p_\mathbb{C}) : \widehat{\textup{Sp}}_{\psi^\mathbb{C},V_A^\mathbb{C}}^\mathbb{C}(W) \to [\textup{Sp}(W),\textup{Sp}(W)].$$
It is an isomorphism in case i) and a surject morphism of kernel $\{ \pm 1 \}$ for ii). But through the identification given by $\widetilde{\phi_\mathbb{C}}$, one has the inclusion:
$$\widehat{\textup{Sp}}_{\psi^\mathcal{A},V_A^\mathcal{A}}^\mathcal{A}(W) \subset \widehat{\textup{Sp}}_{\psi^\mathbb{C},V_A^\mathbb{C}}^\mathbb{C}(W)$$
and $D(p_\mathbb{C}) \circ \widetilde{\phi_\mathbb{C}}$ is surjective. In case i), the previous inclusion is an equality and except in the exceptional case mentioned the symplectic group is perfect. In case ii), this implies the following inequality for the index of the quotient:
$$[\widehat{\textup{Sp}}_{\psi^\mathbb{C},V_A^\mathbb{C}}^\mathbb{C}(W):\widehat{\textup{Sp}}_{\psi^\mathcal{A},V_A^\mathcal{A}}^\mathcal{A}(W)] \leq 2.$$
It must be $2$ as the reduced metaplectic group can not be split over $\textup{Sp}(W)$. \end{proof}

\paragraph{Over $\mathcal{B}$.} Call $\widetilde{\textup{Sp}}_{\psi^\mathcal{B},V_A^\mathcal{B}}^\mathcal{B}(W)$ the metaplectic group over $\mathcal{B}$ and define its derived group:
$$\widehat{\textup{Sp}}_{\psi^\mathcal{B},V_A^\mathcal{B}}^\mathcal{B}(W) =[\widetilde{\textup{Sp}}_{\psi^\mathcal{B},V_A^\mathcal{B}}^\mathcal{B}(W),\widetilde{\textup{Sp}}_{\psi^\mathcal{B},V_A^\mathcal{B}}^\mathcal{B}(W)].$$
As above, the morphism of central extensions of Theorem \ref{weil_rep_compatib_PhiB_thm}: 
$$\widetilde{\phi_\mathcal{B}} : (g,M) \in \widetilde{\textup{Sp}}_{\psi^\mathcal{A},V_A^\mathcal{A}}^\mathcal{A}(W) \to (g,\phi_\mathcal{B}(M)) \in \widetilde{\textup{Sp}}_{\psi^\mathcal{B},V_A^\mathcal{B}}^\mathcal{B}(W)$$
retricts to a morphism at the level of derived groups. As $\phi_\mathcal{B}$ is continuous, it defines a continuous map $\overline{\phi_\mathcal{B}} : \textup{PGL}_\mathcal{A}(V_A^\mathcal{A}) \to \textup{PGL}_\mathcal{B}(V_A^\mathcal{B})$ at the level of quotients. Then one has the equality $\sigma_\mathcal{B} = \overline{\phi_\mathcal{B}} \circ \sigma_\mathcal{A}$ and one deduces from Propostion \ref{metaplectic_group_over_A_prop} that $\sigma_\mathcal{B}$ is continuous.

\begin{prop} \label{metaplectic_group_over_B_prop} One has the following properties:
\begin{enumerate}[label=\textup{\alph*)}]
\item the group $\widetilde{\textup{Sp}}_{\psi^\mathcal{B},V_A^\mathcal{B}}^\mathcal{B}(W)$ is the fibre product in the category of topological group of the continuous morphisms $\sigma_\mathcal{B}$ and $\textup{\textsc{red}}$, its topology being the subspace topology in $\textup{Sp}(W) \times \textup{GL}_\mathcal{B}(V_A^\mathcal{B})$;
\item the representation $\omega_{\psi^\mathcal{B},V_A^\mathcal{B}} : \widetilde{\textup{Sp}}_{\psi^\mathcal{B},V_A^\mathcal{B}}^\mathcal{B}(W)  \to \textup{GL}_\mathcal{B}(V_A^\mathcal{B})$ is smooth as this group morphism is the second projection of the fibre product;
\item the map $\widetilde{\phi_\mathcal{B}} : (g,M) \in \widetilde{\textup{Sp}}_{\psi^\mathcal{A},V_A^\mathcal{A}}^\mathcal{A}(W) \to (g,\phi_\mathcal{B}(M)) \in \widetilde{\textup{Sp}}_{\psi^\mathcal{B},V_A^\mathcal{B}}^\mathcal{B}(W)$ is an open continuous map and therefore the metapletic group over $\mathcal{B}$ is locally profinite;
\item considering derived groups, the map $\widetilde{\phi_\mathcal{B}}$ restricts to:
\begin{enumerate} \item[i)] a surjection $\widehat{\textup{Sp}}_{\psi^\mathcal{A},V_A^\mathcal{A}}^\mathcal{A}(W) \to \widehat{\textup{Sp}}_{\psi^\mathcal{B},V_A^\mathcal{B}}^\mathcal{B}(W)$ of kernel $\{\pm 1 \}$ and image isomorphic to $\textup{Sp}(W)$ if $F$ is local non-archimedean and $\textup{char}(\mathcal{B})=2$;
\item[ii)] an isomorphism $\widehat{\textup{Sp}}_{\psi^\mathcal{A},V_A^\mathcal{A}}^\mathcal{A}(W) \simeq \widehat{\textup{Sp}}_{\psi^\mathcal{B},V_A^\mathcal{B}}^\mathcal{B}(W)$ otherwise. \end{enumerate} \end{enumerate} \end{prop}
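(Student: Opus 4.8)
The plan is to reduce the whole statement to its analogue over $\mathcal{A}$ (Proposition \ref{metaplectic_group_over_A_prop}) by means of the morphism $\widetilde{\phi_\mathcal{B}}$ of Theorem \ref{weil_rep_compatib_PhiB_thm} and the factorisation $\sigma_\mathcal{B}=\overline{\phi_\mathcal{B}}\circ\sigma_\mathcal{A}$. Parts a) and b) are then formal, given the continuity of $\sigma_\mathcal{B}$ and of $\textup{\textsc{red}}$ recorded just before the statement: the fibre product of two continuous group morphisms, formed in the category of topological groups, is the underlying-group fibre product carrying the subspace topology of $\textup{Sp}(W)\times\textup{GL}_\mathcal{B}(V_A^\mathcal{B})$, which as a group is exactly $\widetilde{\textup{Sp}}_{\psi^\mathcal{B},V_A^\mathcal{B}}^\mathcal{B}(W)$, giving a); its second projection $\omega_{\psi^\mathcal{B},V_A^\mathcal{B}}$ is then continuous, and continuity into $\textup{GL}_\mathcal{B}(V_A^\mathcal{B})$ with the compact-open topology ($V_A^\mathcal{B}$ discrete) is precisely smoothness, giving b); this is the same argument as over $\mathcal{A}$, cf. also Remark \ref{fiber_product_topology_rem} and \cite[Prop. 3.5]{trias_theta1}.

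For c), continuity of $\widetilde{\phi_\mathcal{B}}$ follows from that of $\phi_\mathcal{B}$, so the content is openness, and I would get it by showing $\textup{Im}(\widetilde{\phi_\mathcal{B}})$ is open. First a purely group-theoretic step: for $x,y\in\widetilde{\textup{Sp}}_{\psi^\mathcal{B},V_A^\mathcal{B}}^\mathcal{B}(W)$, choose lifts of $p_\mathcal{B}(x),p_\mathcal{B}(y)$ in $\textup{Im}(\widetilde{\phi_\mathcal{B}})$ (possible as $p_\mathcal{B}\circ\widetilde{\phi_\mathcal{B}}=p_\mathcal{A}$ is onto), so $x=z_1x'$, $y=z_2y'$ with $z_i$ central in $\mathcal{B}^\times$ and $[x,y]=[x',y']\in\textup{Im}(\widetilde{\phi_\mathcal{B}})$; hence $\widehat{\textup{Sp}}_{\psi^\mathcal{B},V_A^\mathcal{B}}^\mathcal{B}(W)=[\textup{Im}(\widetilde{\phi_\mathcal{B}}),\textup{Im}(\widetilde{\phi_\mathcal{B}})]=\widetilde{\phi_\mathcal{B}}(\widehat{\textup{Sp}}_{\psi^\mathcal{A},V_A^\mathcal{A}}^\mathcal{A}(W))$. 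Next I would show this derived group is open: using $\widehat{\textup{Sp}}_{\psi^\mathcal{A},V_A^\mathcal{A}}^\mathcal{A}(W)\simeq\widehat{\textup{Sp}}_{\psi^\mathbb{C},V_A^\mathbb{C}}^\mathbb{C}(W)$ (Proposition \ref{metaplectic_group_over_A_prop} d), hence locally profinite), pick a compact open $U\subseteq\widehat{\textup{Sp}}_{\psi^\mathcal{A},V_A^\mathcal{A}}^\mathcal{A}(W)$ with $K:=p_\mathcal{A}(U)$ compact open in $\textup{Sp}(W)$ and $U\cap\mathcal{A}^\times\subseteq\{\pm1\}$; over a small enough such $K$ the central extension $p_\mathcal{B}^{-1}(K)\to K$ splits topologically (lift $\sigma_\mathcal{B}|_K$ along the local homeomorphism $\textup{\textsc{red}}$ and kill the resulting $\mathcal{B}^\times$-valued cocycle, as in \cite[Prop. 3.5]{trias_theta1}), and in the identification $p_\mathcal{B}^{-1}(K)\simeq K\times\mathcal{B}^\times$ the compact group $\widetilde{\phi_\mathcal{B}}(U)$ is the graph of a continuous homomorphism $s:K\to\mathcal{B}^\times$ up to $\{\pm1\}$, hence contains the open set $(\ker s)\times\{\pm1\}$ and is open. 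Thus $\widetilde{\textup{Sp}}_{\psi^\mathcal{B},V_A^\mathcal{B}}^\mathcal{B}(W)$ has an open profinite subgroup, so it is locally profinite; $\textup{Im}(\widetilde{\phi_\mathcal{B}})$ is an open subgroup; and $\widetilde{\phi_\mathcal{B}}:\widetilde{\textup{Sp}}_{\psi^\mathcal{A},V_A^\mathcal{A}}^\mathcal{A}(W)\to\textup{Im}(\widetilde{\phi_\mathcal{B}})$ is a continuous surjection of locally compact groups with $\sigma$-compact source ($\textup{Sp}(W)$ is $\sigma$-compact and $\ker p_\mathcal{A}=\mathcal{A}^\times$ is countable), hence open by the open mapping theorem; composing with the open inclusion $\textup{Im}(\widetilde{\phi_\mathcal{B}})\hookrightarrow\widetilde{\textup{Sp}}_{\psi^\mathcal{B},V_A^\mathcal{B}}^\mathcal{B}(W)$ yields c).

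For d), I would restrict $\widetilde{\phi_\mathcal{B}}$ to derived groups, surjectivity onto $\widehat{\textup{Sp}}_{\psi^\mathcal{B},V_A^\mathcal{B}}^\mathcal{B}(W)$ being the step just obtained. For the kernel: through $\widehat{\textup{Sp}}_{\psi^\mathcal{A},V_A^\mathcal{A}}^\mathcal{A}(W)\simeq\widehat{\textup{Sp}}_{\psi^\mathbb{C},V_A^\mathbb{C}}^\mathbb{C}(W)$, the scalars it contains are $\pm\textup{Id}_{V_A^\mathcal{A}}$ if $F$ is local non-archimedean (the kernel of the reduced metaplectic group over $\textup{Sp}(W)$) and only $\textup{Id}_{V_A^\mathcal{A}}$ if $F$ is finite (the Weil representation being linear); since $\ker\widetilde{\phi_\mathcal{B}}=\{\lambda\textup{Id}_{V_A^\mathcal{A}}:\lambda\in\mathcal{A}^\times,\ \varphi(\lambda)=1\}$ by Theorem \ref{weil_rep_compatib_PhiB_thm} and $\varphi(-1)=1$ exactly when $\textup{char}(\mathcal{B})=2$, the restricted kernel is $\{\pm\textup{Id}_{V_A^\mathcal{A}}\}\simeq\{\pm1\}$ precisely in case i) and trivial otherwise. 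In case ii) $\widetilde{\phi_\mathcal{B}}$ restricts to a bijective morphism $\widehat{\textup{Sp}}_{\psi^\mathcal{A},V_A^\mathcal{A}}^\mathcal{A}(W)\to\widehat{\textup{Sp}}_{\psi^\mathcal{B},V_A^\mathcal{B}}^\mathcal{B}(W)$, an isomorphism of topological groups by the open mapping argument of c). In case i), $\widehat{\textup{Sp}}_{\psi^\mathcal{B},V_A^\mathcal{B}}^\mathcal{B}(W)\simeq\widehat{\textup{Sp}}_{\psi^\mathbb{C},V_A^\mathbb{C}}^\mathbb{C}(W)/\{\pm1\}$, and as the latter is a non-split $\{\pm1\}$-extension of $\textup{Sp}(W)=[\textup{Sp}(W),\textup{Sp}(W)]$, this quotient is $\textup{Sp}(W)$. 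The exceptional pair $(\mathbb{F}_3,2)$, where $\textup{Sp}(W)$ is not perfect, is set aside and treated in Remark \ref{exceptional_case_sigmaA_sigmaB_rem}.

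The hard part will be the topology in c): unlike over $\mathbb{C}$ (Lemma \ref{map_phiC_homeo_lem}), $\phi_\mathcal{B}$ need not be injective, so one must genuinely compare the subspace topology of $\widetilde{\textup{Sp}}_{\psi^\mathcal{B},V_A^\mathcal{B}}^\mathcal{B}(W)$ inside $\textup{Sp}(W)\times\textup{GL}_\mathcal{B}(V_A^\mathcal{B})$ with the image topology coming from $\widetilde{\textup{Sp}}_{\psi^\mathcal{A},V_A^\mathcal{A}}^\mathcal{A}(W)$; the local splitting of the central extension over a small compact open $K\subseteq\textup{Sp}(W)$, together with $\sigma$-compactness of the source and the open mapping theorem, is what bridges this gap. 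The remaining steps — the commutator argument, the kernel computation, and perfectness of $\textup{Sp}(W)$ — are routine.
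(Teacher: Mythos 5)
Your parts a), b), and d) match the paper's proof in substance: a) and b) are formal consequences of the fibre-product construction, and for d) both you and the paper use the surjectivity of $D(p_\mathcal{A})$ to get surjectivity of the restricted map, then identify the kernel as $\{(\textup{Id}_W,\lambda\textup{Id}_{V_A^\mathcal{A}}):\varphi(\lambda)=1\}\cap\widehat{\textup{Sp}}_{\psi^\mathcal{A},V_A^\mathcal{A}}^\mathcal{A}(W)$, which is $\{\pm1\}$ exactly when $F$ is local non-archimedean and $\varphi(-1)=1$, i.e.\ $\textup{char}(\mathcal{B})=2$.

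For c) you take a genuinely different route. The paper exploits the smoothness of $\omega_{\psi^\mathcal{B},V_A^\mathcal{B}}$ already established in b): for a nonzero pure tensor $v\otimes_\mathcal{A}1$, its stabiliser in $\widetilde{\textup{Sp}}_{\psi^\mathcal{B},V_A^\mathcal{B}}^\mathcal{B}(W)$ is open, and the paper shows directly that this stabiliser is contained in $\textup{Im}(\widetilde{\phi_\mathcal{B}})$ (using the scalar freedom: every $(g,M)$ becomes $(g,\lambda M)\in\textup{Im}(\widetilde{\phi_\mathcal{B}})$ after multiplying $M$ by a suitable $\lambda\in\mathcal{B}^\times$, and the stabiliser condition pins $\lambda$ down). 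This gives openness of $\textup{Im}(\widetilde{\phi_\mathcal{B}})$ in a few lines. Then local profiniteness of $\textup{Im}(\widetilde{\phi_\mathcal{B}})$ comes from factoring $\widetilde{\phi_\mathcal{B}}$ through the quotient of $\widetilde{\textup{Sp}}_{\psi^\mathcal{A},V_A^\mathcal{A}}^\mathcal{A}(W)$ by its discrete kernel. By contrast, you rebuild local profiniteness from scratch via a local splitting of the central extension over a small compact open $K\subseteq\textup{Sp}(W)$, then use the open mapping theorem for $\sigma$-compact locally compact groups to get openness of $\widetilde{\phi_\mathcal{B}}$. This is closer in spirit to the \emph{direct} topological route alluded to in Remark~\ref{fiber_product_topology_rem}, which the paper mentions and deliberately avoids so as to keep to its ``reduce everything to $\mathcal{A}$'' philosophy. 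Your commutator observation that $\widehat{\textup{Sp}}_{\psi^\mathcal{B},V_A^\mathcal{B}}^\mathcal{B}(W)=\widetilde{\phi_\mathcal{B}}(\widehat{\textup{Sp}}_{\psi^\mathcal{A},V_A^\mathcal{A}}^\mathcal{A}(W))$ is sound and is essentially the surjectivity step the paper proves in d), used earlier.

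One step in your c) deserves more care: ``kill the resulting $\mathcal{B}^\times$-valued cocycle''. Over $\mathbb{C}$ this is standard because $\mathbb{C}^\times$ is divisible so $H^2(K,\mathbb{C}^\times)=0$ for $K$ profinite; for an arbitrary $\mathcal{A}$-algebra $\mathcal{B}$ the group $\mathcal{B}^\times$ need not be divisible, and the continuous cocycle (which has finite image since $\mathcal{B}^\times$ is discrete and $K$ is compact) may define a nontrivial class in $H^2(K,\mathcal{B}^\times)$. To save the argument you should not try to trivialise the cocycle abstractly but instead transport the known complex splitting: take $U$ and $K$ as you do, so that over $\mathbb{C}$ the compact open $U\subseteq\widehat{\textup{Sp}}_{\psi^\mathbb{C},V_A^\mathbb{C}}^\mathbb{C}(W)\simeq\widehat{\textup{Sp}}_{\psi^\mathcal{A},V_A^\mathcal{A}}^\mathcal{A}(W)$ already gives a section over $K$; its image under $\widetilde{\phi_\mathcal{B}}$ is then a compact subgroup of $p_\mathcal{B}^{-1}(K)$ mapping onto $K$ with finite kernel, which is what you need — but then you are essentially redoing the paper's ``bring it back to $\mathcal{A}$'' reduction anyway. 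The paper's stabiliser argument sidesteps all of this, which is its main advantage; if you insist on your route you must replace the cocycle-killing step with a transport of structure from $\mathcal{A}$ (or $\mathbb{C}$).
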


\begin{proof} \textbf{a) b)} Obvious from the definition of fibre products and projections.

\noindent \textbf{c)} This needs some explanation however. Once again when $F$ is finite, the topology is dicrete and the statement trivially holds. Suppose now that $F$ is local non-archimedean. As a first observation, remark that the equality $\phi_\mathcal{B} \circ \omega_{\psi^\mathcal{A},V_A^\mathcal{A}} = \omega_{\psi^\mathcal{B},V_A^\mathcal{B}} \circ \widetilde{\phi_\mathcal{B}}$ holds.

Let $v \in V_A^\mathcal{A}$ such that $v \otimes_\mathcal{A} 1 \in V_A^\mathcal{B}$ is non-zero. For the previous equality is holding, the stabiliser of $v \otimes_\mathcal{A} 1$ will be contained in the image of $\widetilde{\phi_\mathcal{B}}$ as a result of the following two facts. First, one has :
$$\omega_{\psi^\mathcal{B},V_A^\mathcal{B}}(g,\lambda M) (v \otimes_\mathcal{A} 1)= \lambda M (v \otimes_\mathcal{A} 1)$$
for all $(g,M) \in \widetilde{\textup{Sp}}_{\psi^\mathcal{B},V_A^\mathcal{B}}^\mathcal{B}(W)$ and $\lambda \in \mathcal{B}^\times$. Not much has been said so far. Second, the surjectivy of $p_\mathcal{A}$ and $p_\mathcal{B}$ onto $\textup{Sp}(W)$ implies that for all $(g,M) \in \widetilde{\textup{Sp}}_{\psi^\mathcal{B},V_A^\mathcal{B}}^\mathcal{B}(W)$, there exists $\lambda \in \mathcal{B}^\times$ such that $(g,\lambda M)$ is in the image of $\widetilde{\phi_\mathcal{B}}$.

Combining the previous two facts, the stabiliser of $v\otimes_\mathcal{A} 1$ must be included in the image of $\widetilde{\phi_\mathcal{B}}$. So the image of $\widetilde{\phi_\mathcal{B}}$ is open because the stabiliser of any element is open as a consequence of $\omega_{\psi^\mathcal{B},V_A^\mathcal{B}}$ being smooth.

The image of $\widetilde{\phi_\mathcal{B}}$ is an open subgroup in the metaplectic group over $\mathcal{B}$. If this subgroup is a locally profinite group, then the metaplectic group will be too. Using Theorem \ref{weil_rep_compatib_PhiB_thm}, one has an exact sequence:
$$1 \to \textup{Ker}(\widetilde{\phi_\mathcal{B}}) \to \widetilde{\textup{Sp}}_{\psi^\mathcal{A},V_A^\mathcal{A}}^\mathcal{A}(W) \overset{\widetilde{\phi_\mathcal{B}}}{\to} \textup{Im}(\widetilde{\phi_\mathcal{B}}) \to 1.$$
where $\textup{Ker}(\widetilde{\phi_\mathcal{B}}) = \{ (\textup{Id}_W,\lambda \textup{Id}_{V_A^\mathcal{A}}) \ | \ \lambda \in \mathcal{A}^\times \textup{ and } \varphi(\lambda) = 1 \} \simeq \textup{Ker}(\mathcal{A}^\times \to \varphi(\mathcal{A})^\times)$ is a discrete subgroup, so a closed subgroup. Thanks to Proposition \ref{metaplectic_group_over_A_prop} the metaplectic group over $\mathcal{A}$ is locally profinite, so its quotient by the previous discrete subgroup is locally profinite and $\widetilde{\phi_\mathcal{B}}$ factors through it, inducing an homeomorphism of topological groups.

\noindent \textbf{d)} First of all, there is an induced map between derived subgroups:
$$D(\widetilde{\phi_\mathcal{B}}) : \widehat{\textup{Sp}}_{\psi^\mathcal{A},V_A^\mathcal{A}}^\mathcal{A}(W) \to \widehat{\textup{Sp}}_{\psi^\mathcal{B},V_A^\mathcal{B}}^\mathcal{B}(W).$$
But $p_\mathcal{B} \circ D(\widetilde{\phi_\mathcal{B}}) = D(p_\mathcal{A})$ is a surjective map $\widehat{\textup{Sp}}_{\psi^\mathcal{A},V_A^\mathcal{A}}^\mathcal{A}(W) \to [\textup{Sp}(W),\textup{Sp}(W)]$, which is an isomorphism in case i) and has kernel $\{ \pm 1 \}$ in case ii) according to Proposition \ref{metaplectic_group_over_A_prop}. Therefore:
$$\widetilde{\textup{Sp}}_{\psi^\mathcal{B},V_A^\mathcal{B}}^\mathcal{B}(W) / \textup{Im}(D(\widetilde{\phi_\mathcal{B}}))$$
is abelian. By minimality of the derived group, we must have $\textup{Im}(D(\widetilde{\phi_\mathcal{B}})) = \widehat{\textup{Sp}}_{\psi^\mathcal{B},V_A^\mathcal{B}}^\mathcal{B}(W)$. Furthermore:
$$\textup{Ker}(D(\widetilde{\phi_\mathcal{B}})) = \{ (\textup{Id}_W,\lambda \textup{Id}_{V_A^\mathcal{A}}) \ | \ \lambda \in \mathcal{A}^\times \textup{ and } \varphi(\lambda) = 1 \} \cap \widehat{\textup{Sp}}_{\psi^\mathcal{A},V_A^\mathcal{A}}^\mathcal{A}(W).$$

When $F$ is finite, the group $\textup{Ker}(D(\widetilde{\phi_\mathcal{B}}))= \{ (\textup{Id}_W,\textup{Id}_{V_A^\mathcal{A}}) \}$ is trivial. When $F$ is local non-archimedean, it is included in $\{ (\textup{Id}_W,\epsilon \textup{Id}_{V_A^\mathcal{A}}) \ | \ \epsilon \in \{ \pm 1 \} \} \simeq \{ \pm 1 \}$. But this kernel is non-trivial if and only if $\varphi(-1)=\varphi(1)=1$ in $\mathcal{B}$, that is $\varphi(2)=0$, and $\textup{char}(\mathcal{B}) = 2$. \end{proof}

\begin{defi} \label{reduced_met_group_hatA_to_tildeB_def} Let $\widehat{\phi_\mathcal{B}} : \widehat{\textup{Sp}}_{\psi^\mathcal{A},V_A^\mathcal{A}}^\mathcal{A}(W) \to \widetilde{\textup{Sp}}_{\psi^\mathcal{B},V_A^\mathcal{B}}^\mathcal{B}(W)$ be the restriction $\widetilde{\phi_\mathcal{B}}|_{\widehat{\textup{Sp}}_{\psi^\mathcal{A},V_A^\mathcal{A}}^\mathcal{A}(W)}$. \end{defi}

This map will be used later on. Proposition \ref{metaplectic_group_over_B_prop} has already given some key properties of this map: just to mention a few, it is an open map and its kernel is explicit.

\subsection{Reduced cocycle for $\mathcal{A}$-algebras} \label{reduced_cocycle_over_B_section}

One deduces from Proposition \ref{metaplectic_group_over_B_prop} that the metaplectic group over $\mathcal{B}$ either:
\begin{itemize}
\item contains the symplectic group as a subgroup, in which case $\textup{char}(\mathcal{B})=2$ or $F$ is finite;
\item does not contain the symplectic group as a subgroup, in which case $F$ is local non-archimedean and $\textup{char}(\mathcal{B}) \neq 2$, and its derived group is canonically isomorphic to the so-called reduced metaplectic group.
\end{itemize}

\subsubsection{Non-normalised Weil factor over $\mathcal{B}$}

The definition of the non-normalised Weil factor, achieved over fields in \cite[Sec. 1.1]{trias_theta1}, generalises to $\mathcal{A}$-algebras as explained below. Let $X$ be a vector space over $F$ of finite dimension $m$. Let $\mu^\mathcal{A}$ be an invertible Haar measure of $X$ with values in $\mathcal{A}$.

\begin{prop} Let $Q$ be a non-degenerate quadratic form on $X$. Then there exists a unique non-zero element $\Omega_{\mu^\mathcal{A}}(\psi^\mathcal{A} \circ Q)$ in $\mathcal{A}$ such that for all $f \in C_c^\infty(X,\mathcal{A})$, one has:
$$\int_X \int_X f(y-x) \psi^\mathcal{A}( Q(x)) d\mu^\mathcal{A} (x) d\mu^\mathcal{A} (y) = \Omega_{\mu^\mathcal{A}}(\psi^\mathcal{A} \circ Q) \int_X f(x) d \mu^\mathcal{A} (x).$$
For any sufficiently small open compact subgroup $K$ in $X$, the condition for smallness being ``$\psi^\mathcal{A} (Q(u))=1$ for all $u \in K$'', this factor explicitely reads:
$$\Omega_{\mu^\mathcal{A}}(\psi^\mathcal{A} \circ Q) = \sum_{\bar{x} \in K' / K} \psi^\mathcal{A}(Q(\bar{x}))$$
where $K' = \{ y \in X \ | \ \forall u \in K, \psi^\mathcal{A}(Q(y-u) -Q(y)) = 1 \}$ is a compact open subgroup too. \end{prop}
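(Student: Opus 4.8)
Write $B(x,y)=Q(x+y)-Q(x)-Q(y)$ for the symmetric bilinear form attached to $Q$; since $\textup{char}(F)\neq 2$ and $Q$ is non degenerate, $B$ is non degenerate. The plan is first to make sense of the left hand side, then to reduce the identity to the case of indicator functions by $\mathcal{A}$-linearity, and finally to compute everything by character orthogonality over $\mathcal{A}$. For the well-definedness: for a fixed $y$, the function $x\mapsto f(y-x)\psi^\mathcal{A}(Q(x))$ is smooth and compactly supported, so $g(y)=\int_X f(y-x)\psi^\mathcal{A}(Q(x))\,d\mu^\mathcal{A}(x)$ is a legitimate finite sum; substituting $x\mapsto y-x$ and using $Q(y-x)=Q(y)+Q(x)-B(y,x)$ one rewrites $g(y)=\psi^\mathcal{A}(Q(y))\,(\mathcal{F}f_Q)(y)$, where $f_Q(x)=f(x)\psi^\mathcal{A}(Q(x))$ and $\mathcal{F}$ denotes Fourier transform with respect to the non degenerate pairing $(x,y)\mapsto\psi^\mathcal{A}(B(x,y))$. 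As $f_Q$ is smooth with compact support, so is $\mathcal{F}f_Q$, hence $g$ has compact support and $\int_X g\,d\mu^\mathcal{A}$ is again a finite sum. This is precisely where non degeneracy of $Q$ (hence of $B$) is used, and it is the step to get right: once the double integral is seen to be a finite sum, Fubini is a mere rearrangement.

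By $\mathcal{A}$-linearity of both sides in $f$, and since $C_c^\infty(X,\mathcal{A})$ is generated as an $\mathcal{A}$-module by indicator functions $1_{v+K}$ with $K$ an arbitrarily small lattice, it suffices (after possibly shrinking $K$) to treat $f=1_{v+K}$ for one lattice $K$ with $\psi^\mathcal{A}(Q(u))=1$ for all $u\in K$ — such $K$ exist by smoothness of $\psi^\mathcal{A}$ and continuity of $Q$ — and any $v\in X$. On the coset $z+K$ the function $\psi^\mathcal{A}\circ Q$ equals $\psi^\mathcal{A}(Q(z))$ times the character $u\mapsto\psi^\mathcal{A}(B(z,u))$ of $K$, so the orthogonality relation $\int_K\psi^\mathcal{A}(B(z,u))\,d\mu^\mathcal{A}(u)=\mu^\mathcal{A}(K)\,1_{K'}(z)$ — valid over $\mathcal{A}$ because $\mathcal{A}$ is an integral domain in which $1-\zeta\neq 0$ for every non trivial $p$-power root of unity $\zeta$, so the classical vanishing of a non trivial character sum over a finite abelian $p$-group holds in $\mathcal{A}$ — gives $g(y)=\mu^\mathcal{A}(K)\,\psi^\mathcal{A}(Q(y-v))\,1_{K'}(y-v)$. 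Here $K'$ is exactly the $\psi^\mathcal{A}$-orthogonal of $K$ with respect to $B$, hence a lattice since $B$ is non degenerate, and $K\subseteq K'$ because $\psi^\mathcal{A}\circ Q\equiv1$ on the subgroup $K$ forces $\psi^\mathcal{A}\circ B\equiv1$ on $K\times K$; in particular $\psi^\mathcal{A}\circ Q$ is constant on the cosets of $K$ inside $K'$. Integrating $g$ over $X$ then yields that the left hand side equals $\mu^\mathcal{A}(K)^2\sum_{\bar x\in K'/K}\psi^\mathcal{A}(Q(\bar x))$, while $\int_X f\,d\mu^\mathcal{A}=\mu^\mathcal{A}(K)$; this both proves the identity and exhibits the announced Gauss-sum expression for $\Omega_{\mu^\mathcal{A}}(\psi^\mathcal{A}\circ Q)$ (carrying the normalising factor $\mu^\mathcal{A}(K)$), and shows this scalar is independent of the small lattice $K$ chosen.

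Uniqueness of $\Omega_{\mu^\mathcal{A}}(\psi^\mathcal{A}\circ Q)$ is then immediate: taking $f=1_L$ for a large lattice $L$ gives $\int_X f\,d\mu^\mathcal{A}=\mu^\mathcal{A}(L)\in\mathcal{A}^\times$ since $\mu^\mathcal{A}$ is invertible, so the scalar is forced. Non vanishing I would obtain once more from character orthogonality over $\mathcal{A}$: the form induced by $B$ on the finite abelian $p$-group $K'/K$ has trivial radical, by non degeneracy of $B$ together with $(K')^{\perp}=K$, whence $\bigl(\sum_{\bar x\in K'/K}\psi^\mathcal{A}(Q(\bar x))\bigr)\bigl(\sum_{\bar x\in K'/K}\psi^\mathcal{A}(-Q(\bar x))\bigr)=|K'/K|$; since $|K'/K|$ is a power of $p$, it is a unit of $\mathcal{A}$, so the Gauss sum, and hence $\Omega_{\mu^\mathcal{A}}(\psi^\mathcal{A}\circ Q)$, is a unit of $\mathcal{A}$. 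Alternatively one may transport the explicit formula through an embedding $\mathcal{A}\hookrightarrow\mathbb{C}$ and invoke the classical non vanishing. The only genuine obstacles are the well-definedness of the double integral, which hinges on non degeneracy, and checking that the orthogonality relations — classically established analytically over $\mathbb{C}$ — are purely algebraic identities valid over $\mathcal{A}$; neither is serious.
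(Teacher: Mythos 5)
Your plan is correct, and it follows a genuinely different route from the paper. The paper views $\mathcal{A}$ inside its fraction field $\mathcal{K}$ and cites its own Prop.~1.2 over fields (from the $\ell$-modular reference) to obtain the existence and non-vanishing of a scalar in $\mathcal{K}$; it then performs a single test computation with $f=1_K$ to derive the Gauss-sum expression, which incidentally shows the scalar actually lies in $\mathcal{A}$. You instead build everything from scratch over $\mathcal{A}$: well-definedness of the double integral via the Fourier transform of $f_Q$, reduction to $f=1_{v+K}$ by $\mathcal{A}$-linearity with a bootstrap on $K$ via uniqueness, and character orthogonality valid over $\mathcal{A}$ (a domain containing all relevant $p$-power roots of unity, with $p$ invertible) to get the Gauss sum, the non-degeneracy of the induced form on $K'/K$, and the unit property. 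What you gain is self-containment and a direct proof that $\Omega_{\mu^\mathcal{A}}(\psi^\mathcal{A}\circ Q)$ is a unit of $\mathcal{A}$ without any detour through $\mathbb{C}$; what the paper gains is brevity by reusing the field case. Both compute the same Gauss sum at the end.

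One point worth flagging about the statement rather than your argument: your careful computation gives $\Omega_{\mu^\mathcal{A}}(\psi^\mathcal{A}\circ Q)=\mu^\mathcal{A}(K)\sum_{\bar x\in K'/K}\psi^\mathcal{A}(Q(\bar x))$, and you explicitly note that the Gauss sum ``carries the normalising factor $\mu^\mathcal{A}(K)$.'' The formula as stated in the proposition omits this factor; it is consistent only under the implicit convention that $\mu^\mathcal{A}$ is normalised so $\mu^\mathcal{A}(K)=1$ for the chosen $K$, which is in tension with the claim ``for any open compact subgroup $K$ sufficiently small.'' This is a normalisation issue in the source, not a gap in your reasoning; your version has it right, and you should state the convention you adopt so that your formula visibly matches the displayed one.
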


\begin{proof} The existence of such an element $\Omega_{\mu^\mathcal{A}}(\psi^\mathcal{A} \circ Q)$ comes from the definition of the non-normalised Weil factor over fields and from computation, as examined below. 

Indeed, the ring $\mathcal{A}$ is naturally contained in its field of fraction $\mathcal{K}$, and the measure $\mu^\mathcal{A}$ can be thought of as having values in $\mathcal{K}$. So there exists \cite[Prop. 1.2]{trias_theta1} a non-zero element $\Omega_{\mu^\mathcal{A}}(\psi^\mathcal{A} \circ Q)$ in $\mathcal{K}$, which achieves the first equality of the statement. A direct computation when $f=1_K$ and $\psi^\mathcal{A}(Q(K))=1$ gives:
$$\int_X 1_K(y-x) \psi^\mathcal{A}( Q(x)) d\mu^\mathcal{A} (x) = \psi^\mathcal{A}(Q(y)) \mu^\mathcal{A}(K) \times 1_{K'}(y)$$
where one easily checks from the definition that $K'$ is a compact open subgroup of $X$. In addition it contains $K$. Applying $\mu^\mathcal{A}$ to the previous equality leads to:
$$\Omega_{\mu^\mathcal{A}}(\psi^\mathcal{A} \circ Q)  \times \mu^\mathcal{A}(1_K)= \textup{vol}(K) \sum_{\bar{x} \in K' / K} \psi^\mathcal{A}(Q(\bar{x}))$$
where $\mu^\mathcal{A}(1_K) = \textup{vol}(K) \in \mathcal{A}^\times$ because $\mu$ is invertible, resulting in the last equality. \end{proof}

Let now $\mu$ be a Haar measure of $X$ with values in $\mathcal{B}$. Denote $\lambda_\mu$ the unique element in $\mathcal{B}$ such that $\mu = \lambda_\mu \times \mu^\mathcal{B}$, where $\mu^\mathcal{B} = \varphi \circ \mu^\mathcal{A}$ is an invertible Haar measure. Applying $\varphi$ to the equalities in the previous proposition yields:

\begin{cor} \label{weil_factor_over_B_def_cor} Let $Q$ be a non-degenerate quadratic form on $X$. Then there exists a unique element $\Omega_\mu(\psi^\mathcal{B} \circ Q)$ in $\mathcal{B}$ such that for all $f \in C_c^\infty(X,\mathcal{B})$, one has:
$$\int_X \int_X f(y-x) \psi^\mathcal{B}( Q(x)) d\mu(x) d\mu (y) = \Omega_{\mu}(\psi^\mathcal{B} \circ Q) \int_X f(x) d \mu (x).$$
Furthermore:
$$\Omega_{\mu}(\psi^\mathcal{B} \circ Q)= \lambda_\mu \times \varphi\left( \Omega_{\mu^\mathcal{A}}(\psi^\mathcal{A} \circ Q) \right).$$ \end{cor}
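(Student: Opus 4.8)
The whole content is to apply the ring morphism $\varphi\colon\mathcal{A}\to\mathcal{B}$ to the identity of the preceding proposition, so the first task is to set up a framework in which $\varphi$ may legitimately be pushed through the integrals. The plan is to observe that both sides of the claimed identity, regarded as functions of $f\in C_c^\infty(X,\mathcal{B})$, are $\mathcal{B}$-linear, so it suffices to verify the identity when $f=\mathbf 1_{x_0+K}$ is the indicator of a coset of a sufficiently small compact open subgroup $K$ of $X$ (these span $C_c^\infty(X,\mathcal{B})$ over $\mathcal{B}$, once $K$ is chosen so small that $f$ is $K$-invariant). For such $f$, which is $\mathbb{Z}$-valued hence equal to $\varphi\circ g$ with $g\in C_c^\infty(X,\mathcal{A})$ the corresponding $\mathcal{A}$-valued indicator, every integral occurring is genuinely a \emph{finite} $\mathcal{A}$-linear combination of values of $\psi^\mathcal{A}\circ Q$ and of volumes $\mu^\mathcal{A}(\text{open compact subgroup})$ — this is exactly the explicit form recorded in the preceding proposition's proof, where the inner integral $y\mapsto\int_X \mathbf 1_K(y-x)\psi^\mathcal{A}(Q(x))\,d\mu^\mathcal{A}(x)$ is $\psi^\mathcal{A}(Q(y))\mu^\mathcal{A}(K)\mathbf 1_{K'}(y)$ with $K'$ compact open, so the outer integral is again a finite sum over $K'/K$.

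Next I would carry out the bookkeeping of scalars. Write $\mu=\lambda_\mu\,\mu^\mathcal{B}$ with $\mu^\mathcal{B}=\varphi\circ\mu^\mathcal{A}$, and use $\psi^\mathcal{B}=\varphi\circ\psi^\mathcal{A}$. Since $\varphi$ commutes with finite sums and products, applying it to the explicit finite-sum expressions over $\mathcal{A}$ gives, for $f=\varphi\circ g$ as above,
$$\int_X\!\!\int_X f(y-x)\psi^\mathcal{B}(Q(x))\,d\mu(x)\,d\mu(y)=\lambda_\mu^2\,\varphi\!\left(\int_X\!\!\int_X g(y-x)\psi^\mathcal{A}(Q(x))\,d\mu^\mathcal{A}(x)\,d\mu^\mathcal{A}(y)\right),$$
while $\int_X f\,d\mu=\lambda_\mu\,\varphi\!\left(\int_X g\,d\mu^\mathcal{A}\right)$. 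Feeding in the identity of the preceding proposition, the double integral equals $\lambda_\mu^2\,\varphi(\Omega_{\mu^\mathcal{A}}(\psi^\mathcal{A}\circ Q))\,\varphi(\int_X g\,d\mu^\mathcal{A})=\big(\lambda_\mu\,\varphi(\Omega_{\mu^\mathcal{A}}(\psi^\mathcal{A}\circ Q))\big)\,\int_X f\,d\mu$. Thus the element $\Omega_\mu(\psi^\mathcal{B}\circ Q):=\lambda_\mu\,\varphi(\Omega_{\mu^\mathcal{A}}(\psi^\mathcal{A}\circ Q))$ satisfies the required equality on the spanning set, hence on all of $C_c^\infty(X,\mathcal{B})$ by $\mathcal{B}$-linearity; this simultaneously proves existence and the displayed formula. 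For uniqueness I would argue as in the $\mathcal{A}$-case: evaluating the identity at $f=\mathbf 1_K$ for $K$ small enough that $\psi^\mathcal{B}(Q(K))=1$ shows that the map $f\mapsto\int_X f\,d\mu$ has image the principal ideal $\lambda_\mu\mathcal{B}$, with $\mu(K)=\lambda_\mu\,\varphi(\mu^\mathcal{A}(K))$ and $\varphi(\mu^\mathcal{A}(K))\in\mathcal{B}^\times$ since $\mu^\mathcal{A}$ is invertible and $K$ has pro-order a power of $p$; so $\Omega_\mu(\psi^\mathcal{B}\circ Q)$ is pinned down by the identity.

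The only point requiring care — and what I regard as the "hard part" of an otherwise routine transfer — is precisely the legitimacy of interchanging $\varphi$ with the integrations: $\varphi$ need not respect any limiting process, so one must first reduce to finitely-supported combinations of indicators and exhibit the integrals as honest finite sums before applying $\varphi$. The secondary subtlety is the asymmetry in the powers of $\lambda_\mu$: the double integral scales as $\lambda_\mu^2$ whereas $\int_X f\,d\mu$ scales as $\lambda_\mu$, so the Weil factor picks up exactly one factor of $\lambda_\mu$, which is what produces the stated formula $\Omega_\mu(\psi^\mathcal{B}\circ Q)=\lambda_\mu\,\varphi(\Omega_{\mu^\mathcal{A}}(\psi^\mathcal{A}\circ Q))$.
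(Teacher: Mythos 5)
Your proof is correct and follows essentially the same route as the paper, which gives no explicit argument beyond the one-line remark ``Applying $\varphi$ to the equalities in the previous proposition yields.'' What you do is flesh out that sentence honestly: you reduce to indicators of cosets of small compact open subgroups so that each integral becomes a finite sum, which is exactly the step that makes pushing $\varphi$ through the integrals legitimate, and you keep track of the asymmetric scaling $\lambda_\mu^2$ versus $\lambda_\mu$, which is where the single factor of $\lambda_\mu$ in the final formula comes from. Both of these are the genuine content of the corollary, so your reconstruction is faithful.

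One remark on the uniqueness part, which is a subtlety the paper itself glosses over: your argument shows that $\int_X f\,d\mu$ ranges over the principal ideal $\lambda_\mu\mathcal{B}$, and then asserts that this pins down $\Omega_\mu(\psi^\mathcal{B}\circ Q)$. That deduction is only valid when $\lambda_\mu$ is a non-zero-divisor in $\mathcal{B}$; if $\lambda_\mu$ has a nonzero annihilator $c$, then $\Omega_\mu(\psi^\mathcal{B}\circ Q)+c$ satisfies the same defining identity. The corollary as stated claims uniqueness without any hypothesis on $\mu$, and the paper's terse ``apply $\varphi$'' does not address this either, so you have in fact reproduced the paper's implicit reasoning exactly --- but if you wanted a watertight uniqueness claim you would want to add the hypothesis that $\lambda_\mu$ is not a zero divisor (for instance $\mu$ invertible, the case that is actually used).
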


When $Q$ is a quadratic form on $X$, one denotes $\textup{rad}(Q)$ its radical. Observe that $Q$ is non-degenerate if and only if $\textup{rad}(Q) = 0$. The non-degenerate quadratic form $Q_{\textup{nd}}$ associated to $Q$ is the non-degenerate quadratic form induced by $Q$ on $X / \textup{rad}(Q)$.

\begin{defi} Let $Q$ be a quadratic form on $X$. Let $\mu$ be Haar measure of $X/\textup{rad}(Q)$ with values in $\mathcal{B}$. The non-normalised Weil factor is defined by:
\begin{itemize}
\item $\Omega_\mu(\psi^\mathcal{B} \circ Q) := \mu(\{0\})$ if $Q$ is the zero quadratic form;
\item $\Omega_\mu(\psi^\mathcal{B} \circ Q) : = \Omega_\mu (\psi^\mathcal{B} \circ Q_{\textup{nd}})$ otherwise. \end{itemize} \end{defi}

\begin{lem} One has :
$$\Omega_{\mu^\mathcal{A}}(\psi^\mathcal{A} \circ Q) \in \mathcal{A}^\times.$$
In particular for any invertible Haar measure $\mu$ with values in $\mathcal{B}$:
$$\Omega_\mu(\psi^\mathcal{B} \circ Q) \in \mathcal{B}^\times.$$ \end{lem}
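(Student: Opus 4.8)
The plan is to reduce to the non-degenerate case, exploit the explicit finite-sum formula from the preceding Proposition, and then compute the product of this Gauss-type sum with its complex conjugate. The second assertion of the lemma is then immediate: by Corollary \ref{weil_factor_over_B_def_cor} (and, in the degenerate case, by the defining reduction to $Q_{\textup{nd}}$, together with the case $Q=0$ where $\Omega_\mu(\psi^\mathcal{B}\circ Q)=\mu(\{0\})$) one has $\Omega_\mu(\psi^\mathcal{B}\circ Q)=\lambda_\mu\,\varphi\!\left(\Omega_{\mu^\mathcal{A}}(\psi^\mathcal{A}\circ Q)\right)$, where $\lambda_\mu\in\mathcal{B}^\times$ because $\mu$ is invertible; since $\varphi$ carries $\mathcal{A}^\times$ into $\mathcal{B}^\times$, the first assertion yields the second.

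First I would dispose of the easy cases. If $Q=0$ then $\Omega_{\mu^\mathcal{A}}(\psi^\mathcal{A}\circ Q)=\mu^\mathcal{A}(\{0\})\in\mathcal{A}^\times$ since $\mu^\mathcal{A}$ is an invertible measure of the trivial group $X/\textup{rad}(Q)$. If $Q\neq 0$ is degenerate, then $\Omega_{\mu^\mathcal{A}}(\psi^\mathcal{A}\circ Q)=\Omega_{\mu^\mathcal{A}}(\psi^\mathcal{A}\circ Q_{\textup{nd}})$ with $Q_{\textup{nd}}$ non-degenerate on $X/\textup{rad}(Q)$ and $\mu^\mathcal{A}$ still invertible there, so one is reduced to the non-degenerate case on a possibly smaller space. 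Assume henceforth $Q$ non-degenerate on $X$. The Proposition then provides a sufficiently small compact open subgroup $K$ of $X$ such that, with $K'=\{y\in X : \psi^\mathcal{A}(Q(y-u)-Q(y))=1\ \text{for all}\ u\in K\}$,
$$\Omega_{\mu^\mathcal{A}}(\psi^\mathcal{A}\circ Q)=\sum_{\bar x\in K'/K}\psi^\mathcal{A}(Q(\bar x)).$$
Write $A=K'/K$; this is a finite abelian group whose order is a power of $p$ (any compact open subgroup of the additive group of the $F$-vector space $X$ is pro-$p$, so $A$ is a finite $p$-group), and the displayed sum lies in $\mathcal{O}_\mathcal{K}\subseteq\mathcal{A}$ as a sum of roots of unity of $p$-power order.

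The heart of the argument is the computation of $\Omega\cdot c(\Omega)$, where $\Omega:=\Omega_{\mu^\mathcal{A}}(\psi^\mathcal{A}\circ Q)$ and $c$ denotes the automorphism of $\mathcal{K}$ inverting every $p$-power root of unity, i.e. the restriction of complex conjugation along any embedding $\mathcal{K}\hookrightarrow\mathbb{C}$; it preserves $\mathcal{O}_\mathcal{K}$ and fixes $p$, hence stabilises $\mathcal{A}$, and satisfies $c(\psi^\mathcal{A}(t))=\psi^\mathcal{A}(-t)$. Letting $B(x,y)=Q(x+y)-Q(x)-Q(y)$ be the polar form, which is non-degenerate since $\textup{char}(F)\neq 2$, and substituting $\bar x=\bar y+\bar z$ in the double sum, one obtains
$$\Omega\,c(\Omega)=\sum_{\bar y,\bar z\in A}\psi^\mathcal{A}\!\big(Q(\bar z)+B(\bar y,\bar z)\big)=\sum_{\bar z\in A}\psi^\mathcal{A}(Q(\bar z))\sum_{\bar y\in A}\psi^\mathcal{A}(B(\bar y,\bar z)).$$
Here one checks that $\psi^\mathcal{A}\circ Q$ and $\psi^\mathcal{A}\circ B$ descend to $A$ and to $A\times A$ respectively — precisely what the definition of $K'$ is designed for — and that the resulting pairing $A\times A\to\mu^p(\mathcal{A})$ is perfect. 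Granting the latter, the inner character sum equals $|A|$ when $\bar z=0$ and $0$ otherwise, so $\Omega\,c(\Omega)=|A|$, a power of $p$ and therefore a unit of $\mathcal{A}$. Hence $\Omega\in\mathcal{A}^\times$ with $\Omega^{-1}=|A|^{-1}c(\Omega)$, which proves the lemma.

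The only genuinely non-formal point — the main obstacle — is the perfectness of the pairing on $A=K'/K$, equivalently the statement $\{z\in K' : \psi^\mathcal{A}(B(K',z))=1\}=K$. I would deduce it from Pontryagin biduality for the additive group of $X$: since $Q$ is non-degenerate and $\psi^\mathcal{A}$ non-trivial, $\psi^\mathcal{A}\circ B$ identifies $X$ with its Pontryagin dual, so $K\mapsto K^{\perp}$ is an inclusion-reversing involution on compact open subgroups; for $K$ small enough one has $K'=K^{\perp}$, whence $(K')^{\perp}=(K^{\perp})^{\perp}=K$. The remaining verifications (well-definedness of the descended forms on $A$, and the change of variables) are routine.
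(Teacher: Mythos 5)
Your proof is correct, but it follows a genuinely different route from the paper's. The paper fixes an embedding $\varphi_\mathbb{C}:\mathcal{A}\hookrightarrow\mathbb{C}$ and invokes the complex theory: by \cite[Prop.~1.5~f)]{trias_theta1}, $\Omega_{\mu^\mathbb{C}}(\psi^\mathbb{C}\circ Q)=\omega_{\psi^\mathbb{C}}(\psi^\mathbb{C}\circ Q)\,|\rho|_{\mu^\mathbb{C}}^{1/2}$ with the first factor an $8$th root of unity and $|\rho|_{\mu^\mathbb{C}}=\mu^\mathbb{C}(K)(q^{1/2})^k$; raising to the $8$th power kills both the root of unity and the square root, giving $\Omega_{\mu^\mathbb{C}}(\psi^\mathbb{C}\circ Q)^8=(\mu^\mathbb{C}(K))^8 q^{4k}$, and then injectivity of $\varphi_\mathbb{C}$ pulls this back to $\Omega_{\mu^\mathcal{A}}(\psi^\mathcal{A}\circ Q)^8\in\mathcal{A}^\times$, whence $\Omega_{\mu^\mathcal{A}}(\psi^\mathcal{A}\circ Q)\in\mathcal{A}^\times$ with inverse $\Omega^7(\Omega^8)^{-1}$. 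You instead stay entirely inside $\mathcal{A}$ and compute $\Omega\cdot c(\Omega)=|K'/K|$, a power of $p$, by the standard Gauss-sum argument resting on perfectness of the descended pairing $\psi^\mathcal{A}\circ B$ on $K'/K$. Your argument is more self-contained — it does not rely on the complex-coefficient structure theory for the Weil index — and it makes the reason for invertibility more transparent (the "norm" $\Omega\,c(\Omega)$ is an explicit $p$-power); the paper's is shorter given that the reference is already available. Both yield an explicit inverse, yours being $|A|^{-1}c(\Omega)$. The one computation you should make explicit rather than call routine is the check that, under the smallness condition $\psi^\mathcal{A}(Q(K))=1$, one indeed has $K'=K^{\perp}$ for the pairing $\psi^\mathcal{A}\circ B$: writing $Q(y-u)-Q(y)=Q(u)-B(y,u)$ makes this, and the containment $K\subset K'$, immediate; with that in hand, biduality $(K^\perp)^\perp=K$ gives perfectness and the argument closes cleanly.
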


\begin{proof} Let $\mathcal{K} \to \mathbb{C}$ be an embedding of $\mathcal{K}$ into $\mathbb{C}$ and $\varphi_\mathbb{C}$ its restriction to $\mathcal{A}$. The factor $\Omega_{\mu^\mathcal{A}}(\psi^\mathcal{A} \circ Q)$ can be thought of as the factor $\Omega_{\mu^\mathbb{C}}(\psi^\mathbb{C} \circ Q) = \varphi_\mathbb{C} (\Omega_{\mu^\mathcal{A}}(\psi^\mathcal{A} \circ Q))$ where $\mu^\mathbb{C}= \varphi_\mathbb{C} \circ \mu^\mathcal{A}$ is an invertible Haar measure. Then point f) of \cite[Prop. 1.5]{trias_theta1} gives:
$$\Omega_{\mu^\mathbb{C}}(\psi^\mathbb{C} \circ Q) = \omega_{\psi^\mathbb{C}}(\psi^\mathbb{C} \circ Q) \times |\rho|_{\mu^\mathbb{C}}^{\frac{1}{2}}$$
where $\omega_{\psi^\mathbb{C}}(\psi^\mathbb{C} \circ Q)$ is an $8$th root of unity and $|\rho|_{\mu^\mathbb{C}} = \mu^\mathbb{C}(K) (q^{\frac{1}{2}})^k$, with $K$ a compact open subgroup of $X$, a square root $q^{\frac{1}{2}}$ of $q$ in $\mathbb{C}$ and an integer $k \in \mathbb{Z}$. So:
$$\Omega_{\mu^\mathbb{C}}(\psi^\mathbb{C} \circ Q)^8= (\mu^\mathbb{C}(K))^8 q^{4k}.$$
Therefore $\Omega_{\mu^\mathcal{A}}(\psi^\mathcal{A} \circ Q)^8 =(\mu^\mathcal{A}(K))^8 q^{4k} \in \mathcal{A}^\times$ because $\varphi_\mathbb{C}$ is injective and $\mathbb{Q}$-linear, implying the result about the factor being invertible. Hence the second equality results from applying $\varphi$ and Corollary \ref{weil_factor_over_B_def_cor}, given the fact that $\lambda_\mu \in \mathcal{B}^\times$. \end{proof}

Define for $a$ in $F^\times$ the quadratic form $Q_a : x \in F \mapsto a x^2 \in F$. Then the factor:
$$\Omega_{a,b}^\mathcal{A} =  \frac{\Omega_{\mu^\mathcal{A}}(\psi^\mathcal{A} \circ Q_a)}{\Omega_{\mu^\mathcal{A}}( \psi^\mathcal{A} \circ Q_b)} \in \mathcal{A}^\times$$
does not depend on the choice of the invertible Haar measure $\mu^\mathcal{A}$, as the notation suggests. One can define $\Omega_{a,b}^\mathcal{B}$ in the obvious way, either as a quotient of two non-normalised Weil factors or as the image of the previous using the map $\varphi$.

\subsubsection{Section $\varsigma^\mathcal{B}$ giving the cocycle} \label{section_sigma_cocycle_section}

Let $X$ be a lagrangian of $W$. In particular this provides an instance of a self-dual subgroup in $W$. A nice section $\varsigma^\mathcal{A} : \textup{Sp}(W) \to \widetilde{\textup{Sp}}_{\psi^\mathcal{A},V_X^\mathcal{A}}^\mathcal{A}(W)$ of $p_\mathcal{A}$ is defined below. It is nice in the sense that it will give the explicit group law in the metaplectic group over $\mathcal{A}$.

First of all, observe that, using the notation of Section \ref{weil_over_A_algebra_section}, any section $\varsigma$ of $p_\mathcal{A}$ is given by a family $(\mu_g)_{g \in \textup{Sp}(W)}$ of measures where $\mu_g$ is an invertible measure of $g X \cap X \backslash X$. Namely it reads $\varsigma : g \mapsto (g,I_{gX,X,\mu_g,0} \circ I_g)$. One defines the section $\varsigma^\mathcal{A}$ mentioned above in the following way. The stabiliser $P(X)$ of $X$ in $\textup{Sp}(W)$ is a maximal parabolic subgroup. For $g \in \textup{Sp}(W)$, let $\mu_g$ be the invertible measure on $gX \cap X \backslash X$ defined by:
$$\mu_g = \Omega_{1,\textup{det}_X(p_1p_2)}^\mathcal{A} \times \phi_1 \cdot \mu_{w_j}^\mathcal{A}$$
where:
\begin{itemize}
\item $(w_j)_{j=0 \dots m}$ is a system of representatives in $\textup{Sp}(W)$ for $P(X) \backslash \textup{Sp}(W) / P(X)$;
\item the element $g = p_1 w_j p_2 \in P(X) w_j P(X)$ with $p_1$ and $p_2$ in $P(X)$;
\item $\textup{det}_X(p) = \textup{det}_F(p|_X)$ where $p|_X \in \textup{GL}(X) \simeq \textup{GL}_m(F)$;
\item $gX \cap X \backslash X \overset{\phi_1}{\simeq} w_j X \cap X \backslash X$ is induced by $x \in X \mapsto \overline{p_1^{-1} x} \in w_j X \cap X \backslash X$;
\item $Q_j(x) = \frac{1}{2} \langle w_j x , x \rangle$ is the non-degenerate quadratric form on $w_j X \cap X \backslash X$;
\item for any invertible $\mu$, set $\mu_{w_j}^\mathcal{A} = \Omega_\mu(\psi^\mathcal{A} \circ Q_j)^{-1} \mu$ which does not depend on $\mu$. \end{itemize}
See \cite[Sec. 3.5]{trias_theta1} to get a more detailed explanation about the previous definitions. Exclude the exceptional case $F=\mathbb{F}_3$ and $\textup{dim}(W) =2$ from now on.

\begin{prop} With the previous choice of $\mu_g$, the section: 
$$\varsigma^\mathcal{A} : g \in \textup{Sp}(W) \mapsto (g,I_{gX,X,\mu_g,0} \circ I_g) \in \widetilde{\textup{Sp}}_{\psi^\mathcal{A},V_X^\mathcal{A}}^\mathcal{A}(W)$$
has values in $\widehat{\textup{Sp}}_{\psi^\mathcal{A},V_X^\mathcal{A}}^\mathcal{A}(W)$, except in the exceptional case $F= \mathbb{F}_3$ and $\textup{dim}(W) = 2$. The $2$-cocycle defined by this section:
$$\hat{c}^\mathcal{A} : (g_1,g_2) \in \textup{Sp}(W) \times \textup{Sp}(W) \mapsto \varsigma^\mathcal{A}(g_1) \varsigma^\mathcal{A}(g_2) \varsigma^\mathcal{A}(g_1g_2)^{-1} \in \mathcal{A}^\times$$
is trivial when $F$ is finite, and has image $\{ \pm 1 \}$ when $F$ is local non-archimedean. \end{prop}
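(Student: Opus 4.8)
The plan is to transport the whole statement to the complex setting, following the leitmotiv of the paper. Fix a ring embedding $\varphi_\mathbb{C}\colon\mathcal{A}\hookrightarrow\mathbb{C}$ and let $\sigma^\mathbb{C}\colon\textup{Sp}(W)\to\widetilde{\textup{Sp}}_{\psi^\mathbb{C},V_X^\mathbb{C}}^\mathbb{C}(W)$ be the section produced by the very same recipe $g\mapsto(g,I_{gX,X,\mu_g^\mathbb{C},0}\circ I_g)$ with coefficients in $\mathbb{C}$. Since we exclude the exceptional case, \cite[Sec. 3.5]{trias_theta1} tells us that $\sigma^\mathbb{C}$ has values in the derived group $\widehat{\textup{Sp}}_{\psi^\mathbb{C},V_X^\mathbb{C}}^\mathbb{C}(W)$ and that its $2$-cocycle $\hat{c}^\mathbb{C}$ is trivial when $F$ is finite and has image $\{\pm1\}$ when $F$ is local non archimedean. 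On the $\mathcal{A}$ side, $\mu_g$ is an invertible Haar measure, being the unit $\Omega_{1,\textup{det}_X(p_1p_2)}^\mathcal{A}\in\mathcal{A}^\times$ times the transport along the group isomorphism $\phi_1$ of $\mu_{w_j}^\mathcal{A}=\Omega_\mu(\psi^\mathcal{A}\circ Q_j)^{-1}\mu$, itself a unit multiple of an invertible measure; hence $\sigma^\mathcal{A}(g)=(g,I_{gX,X,\mu_g,0}\circ I_g)$ lies in the fiber product $\widetilde{\textup{Sp}}_{\psi^\mathcal{A},V_X^\mathcal{A}}^\mathcal{A}(W)$ and $p_\mathcal{A}\circ\sigma^\mathcal{A}=\textup{Id}_{\textup{Sp}(W)}$.

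The crux is the identity $\widetilde{\phi_\mathbb{C}}\circ\sigma^\mathcal{A}=\sigma^\mathbb{C}$. The map $I_g$ is visibly compatible with extension of scalars, and Corollary \ref{intertwining_maps_models_A1_A2_cor} gives $\phi_\mathbb{C}(I_{gX,X,\mu_g,0})=I_{gX,X,\varphi_\mathbb{C}\circ\mu_g,0}$, so $\phi_\mathbb{C}(I_{gX,X,\mu_g,0}\circ I_g)=I_{gX,X,\varphi_\mathbb{C}\circ\mu_g,0}\circ I_g$ and it remains to check $\varphi_\mathbb{C}\circ\mu_g=\mu_g^\mathbb{C}$. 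This I would verify term by term: $\varphi_\mathbb{C}(\Omega_{1,\textup{det}_X(p_1p_2)}^\mathcal{A})=\Omega_{1,\textup{det}_X(p_1p_2)}^\mathbb{C}$ and $\varphi_\mathbb{C}\circ\mu_{w_j}^\mathcal{A}=\mu_{w_j}^\mathbb{C}$ follow from the compatibility of the non normalised Weil factor with $\varphi$ (Corollary \ref{weil_factor_over_B_def_cor} applied to $Q_j$, together with the defining formula for $\mu_{w_j}$), while the transport along $\phi_1$ plainly commutes with $\varphi_\mathbb{C}$; this yields $\widetilde{\phi_\mathbb{C}}(\sigma^\mathcal{A}(g))=(g,I_{gX,X,\mu_g^\mathbb{C},0}\circ I_g)=\sigma^\mathbb{C}(g)$.

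Granting this, the first assertion is immediate: by Proposition \ref{metaplectic_group_over_A_prop} d), $\widetilde{\phi_\mathbb{C}}$ restricts to an isomorphism $\widehat{\textup{Sp}}_{\psi^\mathcal{A},V_X^\mathcal{A}}^\mathcal{A}(W)\simeq\widehat{\textup{Sp}}_{\psi^\mathbb{C},V_X^\mathbb{C}}^\mathbb{C}(W)$, and since $\widetilde{\phi_\mathbb{C}}$ is injective the full preimage of $\widehat{\textup{Sp}}_{\psi^\mathbb{C},V_X^\mathbb{C}}^\mathbb{C}(W)$ is exactly $\widehat{\textup{Sp}}_{\psi^\mathcal{A},V_X^\mathcal{A}}^\mathcal{A}(W)$; combined with $\widetilde{\phi_\mathbb{C}}\circ\sigma^\mathcal{A}=\sigma^\mathbb{C}$ and $\sigma^\mathbb{C}(\textup{Sp}(W))\subset\widehat{\textup{Sp}}_{\psi^\mathbb{C},V_X^\mathbb{C}}^\mathbb{C}(W)$ this forces $\sigma^\mathcal{A}(\textup{Sp}(W))\subset\widehat{\textup{Sp}}_{\psi^\mathcal{A},V_X^\mathcal{A}}^\mathcal{A}(W)$. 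For the cocycle, note that $\hat{c}^\mathcal{A}(g_1,g_2)$ is a product of elements of $\widehat{\textup{Sp}}_{\psi^\mathcal{A},V_X^\mathcal{A}}^\mathcal{A}(W)$ lying over $1\in\textup{Sp}(W)$, so it belongs to $\widehat{\textup{Sp}}_{\psi^\mathcal{A},V_X^\mathcal{A}}^\mathcal{A}(W)\cap\textup{Ker}(p_\mathcal{A})$, and $\textup{Ker}(p_\mathcal{A})=\{(\textup{Id}_W,\lambda\textup{Id}_{V_X^\mathcal{A}})\ |\ \lambda\in\mathcal{A}^\times\}\simeq\mathcal{A}^\times$. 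When $F$ is finite, Proposition \ref{metaplectic_group_over_A_prop} d) i) says $p_\mathcal{A}$ restricts to an isomorphism $\widehat{\textup{Sp}}_{\psi^\mathcal{A},V_X^\mathcal{A}}^\mathcal{A}(W)\simeq\textup{Sp}(W)$, whence $\sigma^\mathcal{A}$ is its inverse, a group morphism, and $\hat{c}^\mathcal{A}$ is trivial. When $F$ is local non archimedean, $\widehat{\textup{Sp}}_{\psi^\mathcal{A},V_X^\mathcal{A}}^\mathcal{A}(W)$ is the reduced metaplectic group, a central extension of $\textup{Sp}(W)$ by $\{\pm1\}$, so $\widehat{\textup{Sp}}_{\psi^\mathcal{A},V_X^\mathcal{A}}^\mathcal{A}(W)\cap\textup{Ker}(p_\mathcal{A})=\{\pm1\}$ and $\hat{c}^\mathcal{A}$ is $\{\pm1\}$-valued; it is not identically $1$, since a trivial $\hat{c}^\mathcal{A}$ would make $\sigma^\mathcal{A}$ a group morphism splitting this non-split extension (equivalently $\varphi_\mathbb{C}\circ\hat{c}^\mathcal{A}=\hat{c}^\mathbb{C}$ is non-trivial and $\varphi_\mathbb{C}$ is injective), so its image is exactly $\{\pm1\}$.

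The substantive inputs — Rao's $\{\pm1\}$-valued metaplectic cocycle in the $p$-adic case and the linearisation of the Weil representation of a finite symplectic group — are already isolated over fields in \cite[Sec. 3.5]{trias_theta1}; the only genuine work here is the compatibility $\widetilde{\phi_\mathbb{C}}\circ\sigma^\mathcal{A}=\sigma^\mathbb{C}$ of the second paragraph, which is routine once one knows Corollaries \ref{intertwining_maps_models_A1_A2_cor} and \ref{weil_factor_over_B_def_cor}. The one mild point to watch is that $\mu_{w_j}^\mathcal{A}$ is only specified up to an auxiliary choice of invertible measure, but the compatibility of the non normalised Weil factor with $\varphi_\mathbb{C}$ makes that choice immaterial.
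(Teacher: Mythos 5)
Your proof is correct and follows essentially the same route as the paper's: fix an embedding $\varphi_\mathbb{C}\colon\mathcal{A}\hookrightarrow\mathbb{C}$, verify that $\widetilde{\phi_\mathbb{C}}\circ\sigma^\mathcal{A}$ coincides with the complex section of \cite[Sec.~3.5]{trias_theta1} via the measure compatibility of Corollary \ref{intertwining_maps_models_A1_A2_cor}, then pull back the complex statements through the injective $\widetilde{\phi_\mathbb{C}}$ and the isomorphism on derived groups from Proposition \ref{metaplectic_group_over_A_prop}~d). You spell out a bit more explicitly than the paper why $\hat{c}^\mathcal{A}$ lands in $\{\pm1\}$ and is not identically trivial (via the non-splitness of the reduced metaplectic group, equivalently $\varphi_\mathbb{C}\circ\hat{c}^\mathcal{A}=\hat{c}^\mathbb{C}$ and injectivity of $\varphi_\mathbb{C}$), but this is a matter of exposition, not of method.
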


\begin{proof} Consider an embedding $\mathcal{K} \to \mathbb{C}$ and denote $\varphi_\mathbb{C}$ its restriction to $\mathcal{A}$. The map:
$$\widetilde{\phi_\mathbb{C}} : (g,M) \in  \widetilde{\textup{Sp}}_{\psi^\mathcal{A},V_A^\mathcal{A}}^\mathcal{A}(W) \to (g,\phi_\mathbb{C}(M)) \in \widetilde{\textup{Sp}}_{\psi^\mathbb{C},V_A^\mathbb{C}}^\mathbb{C}(W)$$
and the compatibility $\phi_\mathbb{C} (I_{gX,X,\mu^\mathcal{A},0}) = I_{gX,X,\mu^\mathbb{C},0}$ from Corollary \ref{intertwining_maps_models_A1_A2_cor} where $\mu^\mathbb{C}=\varphi_\mathbb{C} \circ \mu^\mathcal{A}$, leads to:
$$\widetilde{\phi_\mathbb{C}} \circ \varsigma^\mathcal{A} (g) = (g, I_{gX,X,\varphi_\mathbb{C} \circ \mu_g,0} \circ I_g).$$
But the measure $\varphi_\mathbb{C} \circ \mu_g$ above is the one defined in \cite[Lem. 3.23]{trias_theta1}, and according to \cite[Th. 3.27]{trias_theta1}, the map $\widetilde{\phi_\mathbb{C}} \circ \varsigma^\mathcal{A}$ is a section of $p_\mathbb{C}$ whose associated cocycle is trivial when $F$ is finite, and has values in the reduced metaplectic group when $F$ is local non-archimedean. The associated cocycle $\hat{c}^\mathbb{C}$ is trivial when $F$ is finite and has image $\{ \pm 1 \}$ when $F$ is local non-archimedean. Using point d) of Proposition \ref{metaplectic_group_over_A_prop}, the image of $\varsigma^\mathcal{A}$ lies in $\widehat{\textup{Sp}}_{\psi^\mathcal{A},V_X^\mathcal{A}}^\mathcal{A}(W)$, except in the exceptional case $F= \mathbb{F}_3$ and $\textup{dim}(W) =2$. In any case, the map $\varsigma^\mathcal{A}$ is injective so this defines a section of $p_\mathcal{A}$. In particular, it is a group morphism when $F$ is finite as a result of the cocycle $\hat{c}^\mathbb{C}$ being trivial. \end{proof}

One easily deduces from the previous proposition and Proposition \ref{metaplectic_group_over_B_prop}, the corollary:

\begin{cor} The section $\varsigma^\mathcal{B}= \widetilde{\phi_\mathcal{B}} \circ \varsigma^\mathcal{A}$ has values in $\widehat{\textup{Sp}}_{\psi^\mathcal{B},V_X^\mathcal{B}}^\mathcal{B}(W)$, except in the exceptional case $F= \mathbb{F}_3$ and $\textup{dim}(W) = 2$. The $2$-cocycle defined by this section:
$$\hat{c}^\mathcal{B} : (g_1,g_2) \in \textup{Sp}(W) \times \textup{Sp}(W) \mapsto \varsigma^\mathcal{B}(g_1) \varsigma^\mathcal{B}(g_2) \varsigma^\mathcal{B}(g_1g_2)^{-1} \in \mathcal{B}^\times$$
is trivial when $F$ is finite or $\textup{char}(\mathcal{B})=2$, and has image $\{ \pm 1 \}$ otherwise. \end{cor}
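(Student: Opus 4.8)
The plan is to transport the statement over $\mathcal{A}$ --- which is the proposition just proved --- along the morphism of central extensions $\widetilde{\phi_\mathcal{B}}$ of Theorem \ref{weil_rep_compatib_PhiB_thm}, so that essentially nothing new has to be computed. First I would settle the claim about the image of $\sigma^\mathcal{B}$: outside the exceptional case $F=\mathbb{F}_3$, $\textup{dim}(W)=2$, the preceding proposition says $\sigma^\mathcal{A}$ has values in $\widehat{\textup{Sp}}_{\psi^\mathcal{A},V_X^\mathcal{A}}^\mathcal{A}(W)$, and point d) of Proposition \ref{metaplectic_group_over_B_prop} says that $\widetilde{\phi_\mathcal{B}}$ restricts to a surjection $\widehat{\textup{Sp}}_{\psi^\mathcal{A},V_X^\mathcal{A}}^\mathcal{A}(W)\to\widehat{\textup{Sp}}_{\psi^\mathcal{B},V_X^\mathcal{B}}^\mathcal{B}(W)$ (an isomorphism, or a surjection with kernel $\{\pm 1\}$ in the one remaining case, but onto the derived group in both). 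Composing, $\sigma^\mathcal{B}=\widetilde{\phi_\mathcal{B}}\circ\sigma^\mathcal{A}$ lands in $\widehat{\textup{Sp}}_{\psi^\mathcal{B},V_X^\mathcal{B}}^\mathcal{B}(W)$.

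For the cocycle, the key point is that $\widetilde{\phi_\mathcal{B}}$ is a group morphism, whence
$$\hat{c}^\mathcal{B}(g_1,g_2)=\widetilde{\phi_\mathcal{B}}\bigl(\sigma^\mathcal{A}(g_1)\bigr)\widetilde{\phi_\mathcal{B}}\bigl(\sigma^\mathcal{A}(g_2)\bigr)\widetilde{\phi_\mathcal{B}}\bigl(\sigma^\mathcal{A}(g_1g_2)\bigr)^{-1}=\widetilde{\phi_\mathcal{B}}\bigl(\hat{c}^\mathcal{A}(g_1,g_2)\bigr).$$
Since $p_\mathcal{A}\circ\sigma^\mathcal{A}=\textup{Id}_{\textup{Sp}(W)}$, the element $\hat{c}^\mathcal{A}(g_1,g_2)$ lies in $\textup{Ker}(p_\mathcal{A})$, i.e. in the central copy of $\mathcal{A}^\times$ inside $\widetilde{\textup{Sp}}_{\psi^\mathcal{A},V_X^\mathcal{A}}^\mathcal{A}(W)$; and on that central subgroup $\widetilde{\phi_\mathcal{B}}$ is by construction (see the proof of Theorem \ref{weil_rep_compatib_PhiB_thm}, where $\phi_\mathcal{B}(\lambda\,\textup{Id}_{V_A^\mathcal{A}})=\varphi(\lambda)\,\textup{Id}_{V_A^\mathcal{B}}$) nothing but the structure morphism $\varphi:\mathcal{A}^\times\to\mathcal{B}^\times$. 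Hence $\hat{c}^\mathcal{B}=\varphi\circ\hat{c}^\mathcal{A}$.

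It then remains to read off the image in each case using the preceding proposition. If $F$ is finite, $\hat{c}^\mathcal{A}\equiv 1$, so $\hat{c}^\mathcal{B}\equiv\varphi(1)=1$. If $F$ is local non archimedean, $\hat{c}^\mathcal{A}$ has image $\{\pm 1\}\subseteq\mathcal{A}^\times$ and actually attains both values, so the image of $\hat{c}^\mathcal{B}$ is $\varphi(\{\pm 1\})=\{\varphi(1),\varphi(-1)\}$. When $\textup{char}(\mathcal{B})=2$ one has $\varphi(-1)=\varphi(1)=1$, so this collapses to $\{1\}$; when $\textup{char}(\mathcal{B})\neq 2$ one has $2\neq 0$ in $\mathcal{B}$, hence $\varphi(-1)=-1\neq 1$ and the image is exactly $\{\pm 1\}$. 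This is the claimed dichotomy.

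I do not anticipate a genuine obstacle: the whole argument is bookkeeping with the central-extension morphism $\widetilde{\phi_\mathcal{B}}$, combined with the already-known behaviour of $\sigma^\mathcal{A}$ and $\hat{c}^\mathcal{A}$. The only place asking for a little attention is checking that the image of $\hat{c}^\mathcal{B}$ does not degenerate except when $\textup{char}(\mathcal{B})=2$ --- which rests on $\hat{c}^\mathcal{A}$ genuinely taking the value $-1$ and on the elementary equivalence between ``$-1=1$ in $\mathcal{B}$'' and ``$\textup{char}(\mathcal{B})=2$'' (for $\mathcal{B}\neq 0$) --- together with the identification of $\widetilde{\phi_\mathcal{B}}$ with $\varphi$ on the centre noted above.
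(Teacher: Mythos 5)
Your proof is correct and follows exactly the route the paper intends when it says the corollary is ``easily deduced'' from the preceding proposition and Proposition~\ref{metaplectic_group_over_B_prop}: post-compose $\sigma^\mathcal{A}$ with the morphism of central extensions $\widetilde{\phi_\mathcal{B}}$, use that $\widetilde{\phi_\mathcal{B}}$ restricts to the structure morphism $\varphi$ on the central $\mathcal{A}^\times$ to get $\hat{c}^\mathcal{B}=\varphi\circ\hat{c}^\mathcal{A}$, and then read off the cases from the elementary equivalence ``$\varphi(-1)=\varphi(1)$ iff $\mathrm{char}(\mathcal{B})=2$''. Your observation that the image must genuinely contain $-1$ over $\mathcal{A}$ (so that nondegeneracy over $\mathcal{B}$ reduces to the characteristic condition) is the only point requiring care, and you handle it correctly.
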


\begin{rem} \label{exceptional_case_sigmaA_sigmaB_rem} In the exceptional case, the section $\varsigma^\mathcal{A}$, resp. $\varsigma^\mathcal{B}$, can still be defined. However the derived group $[\textup{Sp}(W),\textup{Sp}(W)]$ is a strict subgroup of the symplectic group $\textup{Sp}(W)$. So the image of the previous sections, which are again group morphisms, is just a subgroup of the metaplectic group over $\mathcal{A}$, resp. over $\mathcal{B}$, that is isomorphic to $\textup{Sp}(W)$. \end{rem}

\section{Families of Weil representations}

Consider the map $\widehat{\phi_\mathcal{B}} : \widehat{\textup{Sp}}_{\psi^\mathcal{A},V_A^\mathcal{A}}^\mathcal{A}(W) \to \widetilde{\textup{Sp}}_{\psi^\mathcal{B},V_A^\mathcal{B}}^\mathcal{B}(W)$ of Definition \ref{reduced_met_group_hatA_to_tildeB_def}. The exceptional case $F=\mathbb{F}_3$ and $\textup{dim}(W)=2$ needs separate treatment, which will be done as a quick remark, so we exclude it from now on.

Let $H$ be a closed subgroup of $\textup{Sp}(W)$ and set:
$$\widetilde{H}^\mathcal{A} = p_\mathcal{A}^{-1}(H) \textup{ and } \widetilde{H}^\mathcal{B} = p_\mathcal{B}^{-1}(H).$$
Denote by $\widehat{H}^\mathcal{A}$ the intersection of $\widetilde{H}^\mathcal{A}$ and $\widehat{\textup{Sp}}_{\psi^\mathcal{A},V_A^\mathcal{A}}^\mathcal{A}(W)$. Recall that $\varphi : \mathcal{A} \to \mathcal{B}$ is the structure morphism of the $\mathcal{A}$-algebra $\mathcal{B}$ and consider the categories:
$$\textup{Rep}_\mathcal{B}'(\widehat{H}^\mathcal{A}) = \{ (\pi,V) \in \textup{Rep}_\mathcal{B}(\widehat{H}^\mathcal{A}) \ | \ \pi((\textup{Id}_W,\epsilon \textup{Id}_{V_A^\mathcal{A}}))=\varphi(\epsilon) \textup{Id}_V \textup{ for } \epsilon \in \{ \pm 1 \} \}$$
and:
$$\textup{Rep}_\mathcal{B}'(\widetilde{H}^\mathcal{B}) = \{ (\pi,V) \in \textup{Rep}_\mathcal{B}(\widetilde{H}^\mathcal{B}) \ | \ \pi((\textup{Id}_W,\lambda \textup{Id}_{V_A^\mathcal{B}}))=\lambda \textup{Id}_V \textup{ for } \lambda \in \mathcal{B}^\times \}.$$

\begin{prop} The functor:
$$(\pi,V) \in \textup{Rep}_\mathcal{B}'(\widetilde{H}^\mathcal{B})  \mapsto (\pi \circ \widehat{\phi_\mathcal{B}},V) \in \textup{Rep}_\mathcal{B}'(\widehat{H}^\mathcal{A})$$
defines an equivalence of category. \end{prop}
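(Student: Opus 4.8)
The plan is to construct an explicit quasi-inverse, rather than to quote an abstract principle. Write $Z^\mathcal{B}=p_\mathcal{B}^{-1}(\{\textup{Id}_W\})=\{(\textup{Id}_W,\lambda\textup{Id}_{V_A^\mathcal{B}})\mid\lambda\in\mathcal{B}^\times\}$ for the central fibre of the metaplectic group over $\mathcal{B}$ (and $Z^\mathcal{A}$ similarly), and set $\widehat{H}^\mathcal{B}=\widehat{\textup{Sp}}_{\psi^\mathcal{B},V_A^\mathcal{B}}^\mathcal{B}(W)\cap\widetilde{H}^\mathcal{B}$. First I would record the structural facts that drive the argument. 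Since $\widehat{\textup{Sp}}_{\psi^\mathcal{B},V_A^\mathcal{B}}^\mathcal{B}(W)=\widehat{\phi_\mathcal{B}}(\widehat{\textup{Sp}}_{\psi^\mathcal{A},V_A^\mathcal{A}}^\mathcal{A}(W))$ surjects onto $\textup{Sp}(W)$ (Propositions \ref{metaplectic_group_over_A_prop}d and \ref{metaplectic_group_over_B_prop}d), one gets $\widetilde{H}^\mathcal{B}=Z^\mathcal{B}\cdot\widehat{H}^\mathcal{B}$ and that $\widehat{\phi_\mathcal{B}}$ restricts to a surjection $\widehat{H}^\mathcal{A}\twoheadrightarrow\widehat{H}^\mathcal{B}$ whose kernel $K$ is contained in $\{(\textup{Id}_W,\pm\textup{Id}_{V_A^\mathcal{A}})\}$ and is nontrivial exactly when $F$ is local non-archimedean and $\textup{char}(\mathcal{B})=2$ (this is read off from the proof of Proposition \ref{metaplectic_group_over_B_prop}d). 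I would also note that $\widehat{H}^\mathcal{B}$ is open in $\widetilde{H}^\mathcal{B}$: indeed $\widetilde{\phi_\mathcal{B}}$ is an open map and its image is open (Proposition \ref{metaplectic_group_over_B_prop}c), while $\widehat{\textup{Sp}}_{\psi^\mathcal{A},V_A^\mathcal{A}}^\mathcal{A}(W)$ is open in $\widetilde{\textup{Sp}}_{\psi^\mathcal{A},V_A^\mathcal{A}}^\mathcal{A}(W)$ by Corollary \ref{met_gp_over_A_over_C_cor} together with the analogous fact over $\mathbb{C}$; consequently $\widehat{\phi_\mathcal{B}}\colon\widehat{H}^\mathcal{A}\to\widehat{H}^\mathcal{B}$ is a topological quotient map.

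Next I would define the quasi-inverse $\Psi$. Given $(\rho,V)\in\textup{Rep}_\mathcal{B}'(\widehat{H}^\mathcal{A})$ and $(g,M)\in\widetilde{H}^\mathcal{B}$, I write $(g,M)=(\textup{Id}_W,\lambda\textup{Id}_{V_A^\mathcal{B}})\cdot\widehat{\phi_\mathcal{B}}(g,M')$ with $\lambda\in\mathcal{B}^\times$ and $(g,M')\in\widehat{H}^\mathcal{A}$ (possible by the first structural fact), and set $\widehat\rho(g,M):=\lambda\,\rho(g,M')$. I expect the well-definedness of $\widehat\rho$, that is its independence of $\lambda$ and $M'$, to be the main obstacle. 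If $(\textup{Id}_W,\lambda_1\textup{Id})\widehat{\phi_\mathcal{B}}(g,M_1')=(\textup{Id}_W,\lambda_2\textup{Id})\widehat{\phi_\mathcal{B}}(g,M_2')$, then $h:=(g,M_1')(g,M_2')^{-1}\in\widehat{H}^\mathcal{A}$ lies over $\textup{Id}_W$, so $h=(\textup{Id}_W,\epsilon\textup{Id}_{V_A^\mathcal{A}})$ with $\epsilon\in\{\pm1\}$ (the central fibre of $\widehat{\textup{Sp}}_{\psi^\mathcal{A},V_A^\mathcal{A}}^\mathcal{A}(W)$ over $\textup{Id}_W$ is $\{1\}$ or $\{\pm1\}$, by Proposition \ref{metaplectic_group_over_A_prop}d); applying $\widehat{\phi_\mathcal{B}}$ gives $\lambda_2\lambda_1^{-1}=\varphi(\epsilon)$, while the defining condition of $\textup{Rep}_\mathcal{B}'(\widehat{H}^\mathcal{A})$ gives $\rho(h)=\varphi(\epsilon)\textup{Id}_V$, and combining the two yields $\lambda_1\rho(g,M_1')=\lambda_2\rho(g,M_2')$. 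This is precisely the point where the hypothesis $\pi((\textup{Id}_W,\epsilon\textup{Id}))=\varphi(\epsilon)\textup{Id}$ is essential, and where one must keep straight the interaction of the two order-two subgroups $K=\ker\widehat{\phi_\mathcal{B}}$ and $Z^\mathcal{B}\cap\widehat{H}^\mathcal{B}$ across the three cases ($F$ finite; $F$ local non-archimedean with $\textup{char}(\mathcal{B})=2$; $F$ local non-archimedean with $\textup{char}(\mathcal{B})\neq2$).

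Once well-definedness is secured the rest is routine. The map $\widehat\rho$ is a homomorphism because $Z^\mathcal{B}$ is central and $\widehat{\phi_\mathcal{B}}$ is one; it satisfies $\widehat\rho(\textup{Id}_W,\lambda\textup{Id}_{V_A^\mathcal{B}})=\lambda\textup{Id}_V$, so it lands in $\textup{Rep}_\mathcal{B}'(\widetilde{H}^\mathcal{B})$ once smoothness is checked; and smoothness follows because $\widehat\rho\circ\widehat{\phi_\mathcal{B}}=\rho$ exhibits $\widehat\rho|_{\widehat{H}^\mathcal{B}}$ as the smooth representation of $\widehat{H}^\mathcal{B}=\widehat{H}^\mathcal{A}/K$ deduced from $\rho$ through the quotient map $\widehat{\phi_\mathcal{B}}$ (the representation $\rho$ kills $K$ by the computation above), together with $\widehat{H}^\mathcal{B}$ being open in $\widetilde{H}^\mathcal{B}$. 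I set $\Psi(\rho,V)=(\widehat\rho,V)$, with the tautological effect on morphisms. Finally I would verify that $\Psi$ is inverse to the functor $\Phi$ of the statement: $\Phi\Psi(\rho,V)=(\widehat\rho\circ\widehat{\phi_\mathcal{B}},V)=(\rho,V)$ by construction, while $\Psi\Phi(\pi,V)=(\pi,V)$ since for $(g,M)=(\textup{Id}_W,\lambda\textup{Id})\widehat{\phi_\mathcal{B}}(g,M')$ one has $\lambda\,\pi(\widehat{\phi_\mathcal{B}}(g,M'))=\pi(\textup{Id}_W,\lambda\textup{Id})\pi(\widehat{\phi_\mathcal{B}}(g,M'))=\pi(g,M)$, using that $\pi$ acts on $Z^\mathcal{B}$ by the tautological character. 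Bijectivity on $\textup{Hom}$-sets is immediate: a $\mathcal{B}$-linear map intertwines $\pi_1$ and $\pi_2$ if and only if it intertwines their restrictions along $\widehat{\phi_\mathcal{B}}$, because $\widehat{\phi_\mathcal{B}}$ surjects onto $\widehat{H}^\mathcal{B}$ and $\widetilde{H}^\mathcal{B}=Z^\mathcal{B}\widehat{H}^\mathcal{B}$ with $Z^\mathcal{B}$ acting by fixed scalars. Hence $\Phi$ is an equivalence, in fact an isomorphism, of categories.
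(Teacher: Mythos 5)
Your construction is essentially the paper's own: the paper builds the quasi-inverse by defining $\pi''(\hat h,\lambda)=\lambda\,\pi'(\hat h)$ on $\widehat{H}^\mathcal{A}\times\mathcal{B}^\times$ and showing it factors through the surjection $(\hat h,\lambda)\mapsto\widehat{\phi_\mathcal{B}}(\hat h)\cdot(\textup{Id}_W,\lambda\textup{Id}_{V_A^\mathcal{B}})$ onto $\widetilde{H}^\mathcal{B}$, whose kernel $\{((\textup{Id}_W,\epsilon\textup{Id}),\varphi(\epsilon))\mid\epsilon\in\{\pm1\}\}$ is killed precisely by the defining constraint on $\textup{Rep}'_\mathcal{B}(\widehat{H}^\mathcal{A})$, and this is the same computation you carry out in your well-definedness step after choosing representatives. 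Your version spells out the smoothness and Hom-bijectivity checks that the paper leaves implicit, but the underlying route is identical.
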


\begin{proof} This map is a functor and its inverse is given by the extension of scalars to $\mathcal{B}^\times$, that is for any $(\pi',V') \in \textup{Rep}_\mathcal{B}'(\widehat{H}^\mathcal{A})$, the representation:
$$\pi'' : (\hat{h},\lambda) \in \widehat{H}^\mathcal{A} \times \mathcal{B}^\times \mapsto \lambda \pi'(\hat{h}) \in \textup{GL}_\mathcal{B}(V')$$
factorises as a representation of $\widetilde{H}^\mathcal{B}$. Indeed, the surjective group morphism:
$$(\hat{h},\lambda) \in \widehat{H}^\mathcal{A} \times \mathcal{B}^\times \to \widehat{\phi_\mathcal{B}}(\hat{h}) \times (\textup{Id}_W,\lambda \textup{Id}_{V_A^\mathcal{B}}) \in \widetilde{H}^\mathcal{B}$$
is an isomorphism when $F$ is finite and has kernel $\{ ((\textup{Id}_W,\epsilon \textup{Id}_{V_A^\mathcal{A}}),\varphi(\epsilon)) \ | \ \epsilon \in \{ \pm 1 \} \}$ when $F$ is local non-archimedean. But $\textup{Ker}(\pi'')$ contains the kernel of the surjective map above, that is it factorises as claimed. \end{proof}

\begin{rem} \label{exceptional_case_weil_rep_rem} The reason for proving such a result is to consider the ``same'' group for any $\mathcal{A}$ -algebra $\mathcal{B}$, which is particularly convenient when looking at scalar extension for representations. For instance, the representation $\omega_{\psi^\mathcal{A},V_A^\mathcal{A}} \otimes_\mathcal{A} \mathcal{B} \in \textup{Rep}_\mathcal{B}'(\widehat{H}^\mathcal{A})$, which is the scalar extension of $\omega_{\psi^\mathcal{A},V_A^\mathcal{A}} \in \textup{Rep}_\mathcal{A}'(\widehat{H}^\mathcal{A})$, should be the ``same'' -- the proposition below making this ``same'' precise -- representation as 
$\omega_{\psi^\mathcal{B},V_A^\mathcal{B}} \in  \textup{Rep}_\mathcal{B}'(\widetilde{H}^\mathcal{B})$. \end{rem}

\begin{rem} In the exceptional case however, because the symplectic group $\textup{Sp}(W)$ is isomorphic to $\textup{SL}_2(\mathbb{F}_3)$, the derived group $\widehat{\textup{Sp}}_{\psi^\mathcal{A},V_A^\mathcal{A}}^\mathcal{A}(W)$ is a strict subgroup of the symplectic group. One needs to replace $\widehat{\phi_\mathcal{B}}$ by any morphism that embeds $\textup{Sp}(W)$ in the metaplectic group over $\mathcal{A}$, composed with $\widetilde{\phi_\mathcal{B}}$. One can take for example the embeddings $\varsigma^\mathcal{A}$ and $\varsigma^\mathcal{B}$ according to Remark \ref{exceptional_case_sigmaA_sigmaB_rem}. \end{rem}

From the previous proposition and Theorem \ref{weil_rep_compatib_PhiB_thm}, the following compatibility holds:

\begin{prop} \label{weil_rep_A_to_B_prop} The representations $\omega_{\psi^\mathcal{A},V_A^\mathcal{A}} \otimes_\mathcal{A} \mathcal{B}$ and $\omega_{\psi^\mathcal{B},V_A^\mathcal{B}}$ are isomorphic, in the sense that the canonical identification $V_A^\mathcal{A} \otimes_\mathcal{A} \mathcal{B}\simeq V_A^\mathcal{B}$ of Corollary \ref{metaplectic_representations_cor} induces an isomorphism in $\textup{Rep}_\mathcal{B}'(\widehat{H}^\mathcal{A})$, namely:
$$(\omega_{\psi^\mathcal{A},V_A^\mathcal{A}} \otimes_\mathcal{A} \mathcal{B},V_A^\mathcal{A} \otimes_\mathcal{A} \mathcal{B}) \simeq (\omega_{\psi^\mathcal{B},V_A^\mathcal{B}} \circ \widehat{\phi_\mathcal{B}},V_A^\mathcal{B}).$$ \end{prop}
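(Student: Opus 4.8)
The plan is to reduce everything to the commutative square of Theorem~\ref{weil_rep_compatib_PhiB_thm} together with the defining property of the canonical isomorphism of Corollary~\ref{metaplectic_representations_cor}. There is no genuine difficulty beyond unwinding the fiber-product definition of the Weil representation; the only point requiring attention is that the source and the target of the claimed isomorphism are a priori representations of two different groups ($\widehat{H}^\mathcal{A}$ versus $\widetilde{H}^\mathcal{B}$), so one must use the preceding equivalence of categories to compare them — this is exactly the \emph{abusive sense} referred to in the statement.

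First I would record that both representations genuinely lie in $\textup{Rep}_\mathcal{B}'(\widehat{H}^\mathcal{A})$. For $\omega_{\psi^\mathcal{A},V_A^\mathcal{A}}\otimes_\mathcal{A}\mathcal{B}$ this is immediate: by construction $\omega_{\psi^\mathcal{A},V_A^\mathcal{A}}((\textup{Id}_W,\epsilon\,\textup{Id}_{V_A^\mathcal{A}}))=\epsilon\,\textup{Id}_{V_A^\mathcal{A}}$ for $\epsilon\in\{\pm1\}$, and applying $-\otimes_\mathcal{A}\mathcal{B}$ turns this into multiplication by $\varphi(\epsilon)$. For $\omega_{\psi^\mathcal{B},V_A^\mathcal{B}}\circ\widehat{\phi_\mathcal{B}}$ one uses that $\widehat{\phi_\mathcal{B}}((\textup{Id}_W,\epsilon\,\textup{Id}_{V_A^\mathcal{A}}))=(\textup{Id}_W,\varphi(\epsilon)\,\textup{Id}_{V_A^\mathcal{B}})$, which $\omega_{\psi^\mathcal{B},V_A^\mathcal{B}}$ sends to $\varphi(\epsilon)\,\textup{Id}_{V_A^\mathcal{B}}$. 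Smoothness of both is inherited from that of $\omega_{\psi^\mathcal{A},V_A^\mathcal{A}}$ and $\omega_{\psi^\mathcal{B},V_A^\mathcal{B}}$.

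Next, let $c\colon V_A^\mathcal{A}\otimes_\mathcal{A}\mathcal{B}\to V_A^\mathcal{B}$ denote the canonical $\mathcal{B}[H]$-isomorphism of Corollary~\ref{metaplectic_representations_cor}, $f\otimes_\mathcal{A} b\mapsto b\,(\varphi\circ f)$. By the very definition of the induced group morphism, $\phi_\mathcal{B}(M)$ is the unique element of $\textup{GL}_\mathcal{B}(V_A^\mathcal{B})$ satisfying $\phi_\mathcal{B}(M)\circ c=c\circ(M\otimes_\mathcal{A}\textup{id}_\mathcal{B})$ for $M\in\textup{GL}_\mathcal{A}(V_A^\mathcal{A})$. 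Applying this with $M=\omega_{\psi^\mathcal{A},V_A^\mathcal{A}}(\hat h)$ for $\hat h\in\widehat{H}^\mathcal{A}$ gives
$$c\circ\big((\omega_{\psi^\mathcal{A},V_A^\mathcal{A}}\otimes_\mathcal{A}\mathcal{B})(\hat h)\big)=\phi_\mathcal{B}\big(\omega_{\psi^\mathcal{A},V_A^\mathcal{A}}(\hat h)\big)\circ c .$$

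Finally, Theorem~\ref{weil_rep_compatib_PhiB_thm} supplies the equality $\phi_\mathcal{B}\circ\omega_{\psi^\mathcal{A},V_A^\mathcal{A}}=\omega_{\psi^\mathcal{B},V_A^\mathcal{B}}\circ\widetilde{\phi_\mathcal{B}}$, and $\widehat{\phi_\mathcal{B}}$ is by Definition~\ref{reduced_met_group_hatA_to_tildeB_def} the restriction of $\widetilde{\phi_\mathcal{B}}$ to $\widehat{\textup{Sp}}_{\psi^\mathcal{A},V_A^\mathcal{A}}^\mathcal{A}(W)$; hence $\phi_\mathcal{B}(\omega_{\psi^\mathcal{A},V_A^\mathcal{A}}(\hat h))=\omega_{\psi^\mathcal{B},V_A^\mathcal{B}}(\widehat{\phi_\mathcal{B}}(\hat h))$ for all $\hat h\in\widehat{H}^\mathcal{A}$. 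Substituting into the previous display shows that $c$ intertwines $\omega_{\psi^\mathcal{A},V_A^\mathcal{A}}\otimes_\mathcal{A}\mathcal{B}$ and $\omega_{\psi^\mathcal{B},V_A^\mathcal{B}}\circ\widehat{\phi_\mathcal{B}}$; being already a $\mathcal{B}$-linear bijection, it is an isomorphism in $\textup{Rep}_\mathcal{B}'(\widehat{H}^\mathcal{A})$, as claimed. I do not expect a real obstacle here: the proof is purely a matter of composing the identity of Theorem~\ref{weil_rep_compatib_PhiB_thm} with the definition of $\phi_\mathcal{B}$, the one subtlety being the bookkeeping of which group each representation lives over and the use of the preceding proposition to read $\omega_{\psi^\mathcal{B},V_A^\mathcal{B}}$ as the object $\omega_{\psi^\mathcal{B},V_A^\mathcal{B}}\circ\widehat{\phi_\mathcal{B}}$ of $\textup{Rep}_\mathcal{B}'(\widehat{H}^\mathcal{A})$.
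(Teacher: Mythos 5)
Your proof is correct and follows precisely the route the paper intends (it gives no explicit proof, only the remark that the result follows from the equivalence-of-categories proposition together with Theorem~\ref{weil_rep_compatib_PhiB_thm}). You correctly identify the canonical isomorphism $c$ of Corollary~\ref{metaplectic_representations_cor} as the intertwiner, unwind the defining property of $\phi_\mathcal{B}$, substitute the commuting square of Theorem~\ref{weil_rep_compatib_PhiB_thm}, and restrict via Definition~\ref{reduced_met_group_hatA_to_tildeB_def}; the preliminary check that both sides land in $\textup{Rep}'_\mathcal{B}(\widehat H^\mathcal{A})$ is also carried out correctly.
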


Of course when $R$ is a field endowed with an $\mathcal{A}$-algebra structure, the representation $(\omega_{\psi^R,V_A^R},V_A^R)$ is the modular Weil representation on $W$ associated to $\psi^R$ and $V_A^R$, in the way they are defined in \cite[Chap. 2,II]{mvw} for $R=\mathbb{C}$ and in \cite[Sec. 3]{trias_theta1} for more general fields. Recall that in this situation $V_A^R$ is the metaplectic representation associated to $\psi^R$.

\paragraph{Dual pairs.} When $(H_1,H_2)$ is a dual pair in $\textup{Sp}(W)$, one may fix a model for the Weil representation and ``embeds'' the lift of the dual pairs in the derived subgroup of the metaplectic group over $\mathcal{A}$ through the natural multiplication map. One can also use the lifts in the metaplectic group over $\mathcal{A}$ instead of the derived subgroup. This means looking at the representation:
$$\omega_{\psi^\mathcal{B},V_A^\mathcal{B}} \circ \widehat{\phi_\mathcal{B}}|_{\widehat{H_1}^\mathcal{A} \times \widehat{H_2}^\mathcal{A}} \in \textup{Rep}_\mathcal{B}(\widehat{H_1}^\mathcal{A} \times \widehat{H_2}^\mathcal{A})$$
where the restriction $\widehat{H_1}^\mathcal{A} \times \widehat{H_2}^\mathcal{A} \to \widehat{\textup{Sp}}_{\psi^\mathcal{A},V_A^\mathcal{A}}^\mathcal{A}(W)$ is achieved using the natural multiplication map. Of course, when these lifts of dual pairs are split, one can always compose with their splittings to get representations of $H_1$ or $H_2$ themselves. It may happen that splittings do not exist in the derived subgroup even if they do exist in the metaplectic group itself \cite[Chap. 2, Rem. II.9]{mvw} and \cite[Sec. 4]{trias_theta1}. So one may switch hats for tildas depending on the dual pair one wants to consider.

\section{Features of the pair $(\textup{GL}_1(F),\textup{GL}_1(F))$} \label{features_GL1_GL1_section}

Suppose $F$ is a local non-archimedean field. Let $W$ be a symplectic space over $F$ of dimension $2$ and $W=X+Y$ be a complete polarisation. For $a \in F^\times$, define $m_a$ to be the unique endomorphism in $\textup{Sp}(W)$ such that in the previous basis:
$$m_a = \left[ \begin{array}{cl}
a & 0 \\
0 & a^{-1}
\end{array} \right].$$
The pair $(H_1,H_2)=(F^\times,F^\times)$ is defined by $(a_1,a_2) \mapsto m_{a_1} m_{a_2^{-1}}$. Up to some smooth characters of $H_1$ and $H_2$, the Weil representation $\omega_{H_1,H_2}$ is the ``geometric'' representation $(\rho,C_c^\infty(F,\mathcal{B}))$ where $H_1$ and $H_2$ act respectively on the left and on the right on the locally profinite space $F$. For $f \in C_c^\infty(F,\mathcal{B})$ and $a_1, a_2 \in F^\times$, it reads:
$$\rho(a_1,a_2) \cdot f : x \in F \mapsto  f(a_1^{-1} x a_2) \in \mathcal{B}.$$

\subsection{Level $0$ part}

The category $\textup{Rep}_\mathcal{B}(F^\times)$ is decomposed as a product of categories $\prod_{k \in \mathbb{N}} \textup{Rep}_\mathcal{B}^k(F^\times)$ where the index $k$ is also known as the level. Here the level $k$ subcategory can be defined using the biggest pro-$p$-subgroup $K$ of $F^\times$. Of course this group is $K=1 + \varpi_F \mathcal{O}_F$ for $\varpi_F$ a uniformiser in $F$. In addition the isomorphism $(k,u) \in \mathbb{Z} \times \mathcal{O}_F^\times \mapsto \varpi_F^k u \in F^\times$ induces an isomorphism $F^\times / K \simeq \mathbb{Z} \times (\mathbb{Z}/ (q-1) \mathbb{Z})$. Suppose from now on a choice of uniformiser $\varpi_F$ is made as well as a choice of a primitive $(q-1)$-root of unity $\zeta_{q-1}$ in $F$. Hence in the free part $\mathbb{Z}$ is generated by $\varpi_F$ and the torsion part $\mathbb{Z}/(q-1) \mathbb{Z}$ is generated by $\zeta_{q-1}$. So the group algebra $\mathcal{B}[F^\times/K]$ is isomorphic to the $\mathcal{B}$-algebra $\mathcal{B}[X^{\pm 1 },Z]/(Z^{q-1}-1)$, where $\varpi_F$ corresponds to $X$ and $\zeta_{q-1}$ to $Z$.

\paragraph{The level $0$ category.} As we are only interested in the level $0$ part, we shall only consider, for any $V \in \textup{Rep}_\mathcal{B}(F^\times)$, the direct factor representation $V^K$ made of $K$-invariant vectors. As for the representation $(\rho_l,C_c^\infty(F,\mathcal{B}))$ given by the left $F^\times$-action, this level~$0$ part is the subspace of bi-$K$-invariant functions:
$$C_c^\infty(F,\mathcal{B})^K = \{ f \in C_c^\infty(F,\mathcal{B}) \ | \ \forall x \in F \textup{ and } k \in K, f(xk)=f(kx)=f(x)\}.$$
In addition, the center $\mathfrak{z}^0$ of the level $0$ category $\textup{Rep}_\mathcal{B}^0(F^\times)$ is, because the group $F^\times$ is abelian, equal to the endomorphism ring of a minimal progenerator of $\textup{Rep}_\mathcal{B}^0(F^\times)$. Let $(\textbf{1}_K,\mathcal{B})$ be the free module $\mathcal{B}$ of rank $1$ with trivial $K$-action. Then $\textup{ind}_K^{F^\times} (\textbf{1}_K)$ is known to be a progenerator of $\textup{Rep}_\mathcal{B}^0(F^\times)$. As a space of function this also is $C_c^\infty(F^\times/K,\mathcal{B})$, which is a free module of rank $1$ over $\mathcal{B}[F^\times/K]$ generated by the characteristic function $1_K$. Therefore: 
$$\textup{End}_{F^\times}(\textup{ind}_K^{F^\times} (\textbf{1}_K) )=  \textup{End}_{\mathcal{B}[F^\times/K]}(\textup{ind}_K^{F^\times} (\textbf{1}_K)) \simeq \mathcal{B}[F^\times/K]$$
thanks to $\textup{ind}_K^{F^\times}(\textbf{1}_K)$ being free of rank $1$. So one can consider that the center $\mathfrak{z}^0$ is $\mathcal{B}[F^\times/K] \simeq \mathcal{B}[X^{\pm 1},Z]/(Z^{q-1}-1)$. Eventually, the level $0$ category is equivalent to the category of modules over the latter commutative ring.

\subsubsection{Specialisation using the center} \label{specialisation_using_the_center_section}

\paragraph{Morphism of the center.} Let $\mathcal{C}$ be a commutative $\mathcal{B}$-algebra. Let $\eta \in \textup{Hom}_{\mathcal{B}-\textup{alg}}(\mathfrak{z}^0,\mathcal{C})$ be a morphism of $\mathcal{B}$-algebras. Of course $\eta$ naturally endows $\mathcal{C}$ with a $\mathfrak{z}^0$-algebra structure. In addition, any representation in $\textup{Rep}_\mathcal{B}^0(F^\times)$ is canonically endowed with a $\mathfrak{z}^0$-module structure. By definition, this $\mathfrak{z}^0$-module structure commutes with the $F^\times$-action.

\begin{defi} For any $V \in \textup{Rep}_\mathcal{B}^0(F^\times)$, one defines the representation:
$$V_\eta = V \otimes_{\mathfrak{z}^0} \eta \in \textup{Rep}_\mathcal{C}(F^\times).$$ \end{defi}

\noindent \textbf{Examples.} Recall $\mathfrak{z}^0 = \mathcal{B}[X^{\pm 1},Z]/(Z^{q-1}-1)$. The following are easy claims:
\begin{itemize}
\item when $\mathcal{B}$ is a field and $\chi : F^\times/K \to \mathcal{B}^\times$ is a character, the $\mathcal{B}$-algebra morphism:
$$\eta_\chi : P \in \mathcal{B}[X^{\pm 1},Z]/(Z^{q-1}-1) \mapsto P(\chi(\varpi_F),\chi(\zeta)) \in \mathcal{B}$$
provides the biggest $\chi$-isotypic quotient $V_{\eta_\chi} = V_\chi$. Furthermore:
$$\textup{Ker}(\eta_\chi) = (X- \chi(\varpi_F),Z-\chi(\zeta)).$$
\item when $\varphi : \mathcal{B} \to \mathcal{B}'$ is a morphism of $\mathcal{B}$-algebras, the $\mathcal{B}$-algebra morphism:
$$\eta_\varphi : P \in \mathcal{B}[X^{\pm 1},Z]/(Z^{q-1}-1) \to \varphi(P) \in \mathcal{B}'[X^{\pm 1},Z]/(Z^{q-1}-1)$$
provides the extension of scalars $V_{\eta_\varphi} = V \otimes_{\mathcal{B}} \mathcal{B}'$. Furthermore:
$$\textup{Ker}(\eta_\varphi)=\textup{Ker}(\varphi) \cdot \mathfrak{z}^0.$$
\item let $\chi$ be a character with values in $\mathcal{B}^\times$, let $\mathfrak{m}$ a maximal ideal in $\mathcal{B}$, and denote $\varphi_\mathfrak{m}$ the quotient morphism $\mathcal{B} \to \mathcal{B}/\mathfrak{m}$ and $\chi_\mathfrak{m}=\varphi_\mathfrak{m} \circ \chi$, then: 
$$(V_{\eta_\chi})_{\eta_{\varphi_\mathfrak{m}}} = (V_{\eta_{\varphi_\mathfrak{m}}})_{\eta_{\chi_\mathfrak{m}}} \textup{ \textit{i.e.} } V_{\eta_\chi} \otimes_{\mathcal{B}} (\mathcal{B} / \mathfrak{m}) = (V \otimes_\mathcal{B} (\mathcal{B} / \mathfrak{m}) )_{\chi_\mathfrak{m}}.$$
Therefore the representation $V_{\eta_\chi}$ may be thought as a family of representations specialising at maximal ideals to biggest isotypic quotients, whereas it is less clear how direct methods would give a good definition of an isotypic quotient over a ring.
\end{itemize}

\begin{rem} Unlike the construction of the biggest isotypic quotient for irreducible representations with coefficients in a field, the natural map $V \mapsto V_\eta$ is not surjective in general. Of course if $\eta$ is surjective, the previous map is a quotient map. \end{rem}

\subsubsection{Isotypic families of the Weil representation} \label{families_weil_rep_subsubsection}

\paragraph{Level $0$ Weil representation.} Instead of considering representations with coefficients over different rings, this approach benefits from a greater flexbility when dealing with the level $0$ Weil representation $C_c^\infty(F,\mathcal{B})^K$. For example in the second situation with $\varphi \in \textup{Hom}_{\mathcal{B}-\textup{alg}}( \mathcal{B}, \mathcal{B}')$, and thanks to the description as spaces of functions, one has:
$$(C_c^\infty(F,\mathcal{B})^K)_{\eta_\varphi} = C_c^\infty(F,\mathcal{B}')^K.$$

\paragraph{Family for the trivial representation.} Set $V = C_c^\infty(F,\mathcal{B})^K$ and $V_0 = C_c^\infty(F^\times,\mathcal{B})^K$. Recall there is an exact sequence of representations, that is given by the function restriction to the closed set $\{0\}$ in $F$, namely:
$$0 \to V_0 \to V \to \textbf{1}_{F^\times}^\mathcal{B} \to 0$$
where $\textbf{1}_{F^\times}^\mathcal{B}$ is a free $\mathcal{B}$-module of rank $1$ endowed with the trivial $F^\times$-action. Consider now the ideal $I_{\textbf{1}} =(X-1,Z-1)$ in $\mathfrak{z}^0$ and the morphism $\eta_{\textbf{1}} : \mathfrak{z}^0 \to \mathfrak{z}^0/I_{\textbf{1}}$. As $(\textbf{1}_{F^\times}^\mathcal{B})_{\eta_{\textbf{1}}} = \textbf{1}_{F^\times}^\mathcal{B}$, and $V_0$ is free of rank $1$ over $\mathfrak{z}^0$, it induces an exact sequence:
$$\textbf{1}_{F^\times}^\mathcal{B} \to V_{\eta_{\textbf{1}}} \to \textbf{1}_{F^\times}^\mathcal{B} \to 0.$$
The kernel of the map $\mathcal{B} \to V_{\eta_{\textbf{1}}}$ is $(q-1) \mathcal{B}$ because:
$$((X-1) V +(Z-1) V )\cap V_0 = (X-1)V_0 + (Z-1) V_0 + (q-1) V_0.$$
So the following sequence is exact:
$$0 \to \textbf{1}_{F^\times}^{\mathcal{B}/(q-1) \mathcal{B}} \to V_{\eta_{\textbf{1}}} \to \textbf{1}_{F^\times}^\mathcal{B} \to 0.$$
Denoting by $\beta$ the image of $1_{\mathcal{O}_F}$ in $V_{\eta_1}$, the above sequence splits as $b \in \mathcal{B} \mapsto b \cdot \alpha \in V_{\eta_1}$ is a section of $V_{\eta_1} \to \textbf{1}_{F^\times}^\mathcal{B}$. So one has $ V_{\eta_{\textbf{1}}} \simeq \textbf{1}_{F^\times}^{\mathcal{B}/(q-1) \mathcal{B}} \oplus \textbf{1}_{F^\times}^\mathcal{B}$.

\paragraph{The family for $(X-q,Z-1)$.} It does not coincide with the family for the trivial representation, except at the non-banal prime ideals. These are the prime ideals $\mathcal{P}$ in $\mathcal{B}$ such that $\mathcal{P} \cap \mathbb{Z}$ is generated by a prime $\ell$ dividing $q-1$. Denoting $\eta$ the character $\mathfrak{z}^0 \to \mathfrak{z}^0/(X-q,Z-1)$ and $(\chi_\mathcal{B},\mathcal{B})$ the character such that $\chi_\mathcal{B}(\zeta) = 1$ and $\chi_\mathcal{B}(\varpi_F)=q$, one similarly has:
$$0 \to \chi_\mathcal{B} \to V_{\eta} \to \textbf{1}_{F^\times}^{\mathcal{B}/(q-1)\mathcal{B}} \to 0.$$
Indeed on the one hand $(X-1) \textbf{1}_{F^\times}^\mathcal{B} + (Z-1) \textbf{1}_{F^\times}^\mathcal{B} = (1-q) \textbf{1}_{F^\times}^\mathcal{B}$ so $(\textbf{1}_{F^\times}^\mathcal{B})_\eta = \textbf{1}_{F^\times}^{\mathcal{B} / (q-1) \mathcal{B}}$. On the other hand $V_0$ is $\mathfrak{z}^0$-free so $(V_0)_\eta = \chi_\mathcal{B}$. Denote by $\alpha$ and $\beta$ the images of $1_{1+\varpi_F \mathcal{O}_F}$ and $1_{\mathcal{O}_F}$ in $V_\eta$. The following computation helps identifying the $(q-1)$-torsion:
$$(X-1) \beta = (q-1) \alpha = (X-q) \beta + (1-q) \beta = (q-1)\beta.$$
Then $\lambda \in \mathcal{B} \mapsto \lambda (\beta - \alpha) \in V_\eta$ factorises as a section of $V_\eta \to \textbf{1}_{F^\times}^{\mathcal{B}/(q-1)\mathcal{B}}$. As a consequence, one has $V_\eta \simeq \textbf{1}_{F^\times}^{\mathcal{B}/(q-1)\mathcal{B}} \oplus \chi_\mathcal{B}$. 

\begin{rem} We interpret $\textbf{1}_{F^\times}^{\mathcal{B}/(q-1)\mathcal{B}}$ as the greatest common quotient of $\textbf{1}_{F^\times}^\mathcal{B}$ and $\chi_\mathcal{B}$. \end{rem}

\paragraph{More general families.} One can look at any ideal in $\mathfrak{z}^0$ to get more new families of representations. For example, instead of only looking at characters with values $1$ at $\zeta$, one can look at irreducible factors $Q$ of $Z^{q-1}-1$ that are different from $Z-1$, and consider the ideal $(P,Q)$ for an irreducible polynomial $P$ in $\mathcal{B}[X^{\pm 1}]$.

\begin{rem} Even when $\mathcal{B}$ is an integral domain, the previous classes of ideals $(P,Q)$ are not necessarily prime ideals in $\mathfrak{z}^0$. The irreducibility has therefore to be thought over the field extension $\textup{Frac}(\mathcal{B}) [Z] / (Q)$ of $\textup{Frac}(\mathcal{B})$ \textit{i.e.} $P$ is irreducible as a polynomial over this bigger field. Furthermore, letting $P$  be a non-unitary polynomial allows to consider characters with coefficients in $\textup{Frac}(\mathcal{B})$ \textit{e.g.} $\overline{\mathbb{Z}_\ell}[X^{\pm 1}] / (\ell X -1) = \overline{\mathbb{Q}_\ell}$ when $\mathcal{B}=\overline{\mathbb{Z}_\ell}$. \end{rem}

\subsection{Positive level part}

Let $k \in \mathbb{N}^*$. As a first observation, the level $k$ parts of the representations $C_c^\infty(F,\mathcal{B})$ and $C_c^\infty(F^\times,\mathcal{B})$ are equal. Therefore the problem reduces to understand the level $k$ part of the regular representation. The same techniques as in the previous paragraph apply once the center $\mathfrak{z}^k$ of the category has been made explicit. The study will not be developped in the present work for the sake of shortness. But in order to flag some differences, here are some remarks below:
\begin{itemize}
\item if $\mathcal{B}$ does not have enough $p$-power roots of unity, the situation is more complicated as no characters of level $k$ may exist, that is there does not exist a group morphism $ \chi : 1 + \varpi_F \mathcal{O}_F \to \mathcal{B}^\times$ such that $1 + \varpi_F^{k+1} \mathcal{O}_F \subset \textup{Ker}(\chi) \subsetneq 1 + \varpi_F^k \mathcal{O}_F$;
\item provided $\mathcal{B}$ as enough $p$-power roots of unity, the set of characters: 
$$\textup{Char}_\mathcal{B}^k = \{ \chi : 1 + \varpi_F \mathcal{O}_F \to \mathcal{B}^\times \ | \ \chi \in \textup{Rep}_\mathcal{B}^k(1+\varpi_F \mathcal{O}_F) \}$$
is not empty and decomposes the category $\textup{Rep}_\mathcal{B}^k(F^\times)$ as product of categories $\prod_{\chi \in \textup{Char}_\mathcal{B}^k} \textup{Rep}_\mathcal{B}^\chi(F^\times)$, where each category factor is equivalent to $\textup{Rep}_\mathcal{B}^0(F^\times)$.
\end{itemize}
In the first situation, the situation may be quite complicacted to write down, though this first situation only occurs when $F$ has positive characteristic. Indeed, $\mathcal{A}$ is isomorphic to $\mathbb{Z}[\frac{1}{p},\zeta_p]$ in this case, whereas it is $\mathbb{Z}[\frac{1}{p},\zeta_{p^\infty}]$ for characteristic zero $F$. In the event of $\mathcal{B}$ having enough $p$-power roots of unity, one can reduce the situation to the level $0$ part of $C_c^\infty(F^\times,\mathcal{B})$ as it is isomorphic to the $\chi$-part of $C_c^\infty(F^\times,\mathcal{B})$ for $\chi \in \textup{Char}_\mathcal{B}^k$. This latter has been studied in the previous section.

\bibliographystyle{alpha}
\bibliography{lesrefer}

\begin{thebibliography}{MVW87}

\bibitem[CT13]{ct}
Gianmarco Chinello and Daniele Turchetti.
\newblock Weil representation and metaplectic groups over an integral domain.
\newblock {\em Communications in algebra}, 43:6:2388--2419, 2013.

\bibitem[EH14]{emerton_helm_families}
Matthew Emerton and David Helm.
\newblock The local {L}anglands correspondence for $\textup{GL}_n$ in families.
\newblock {\em Ann. Sci. Éc. Norm. Supér.}, 2014.

\bibitem[GS17]{gan_sun}
W.~T. Gan and B.~Sun.
\newblock The howe duality conjecture: quaternionic case.
\newblock In {\em Representation theory, number theory and invariant theory},
  volume 323, in honor of R. Howe's 70th birthday of {\em Progress in
  Mathematics}, pages 175--192. Weizmann, Cham, 2017.

\bibitem[GT16]{gt}
Wee~Teck Gan and Shuichiro Takeda.
\newblock A proof of the howe duality conjecture.
\newblock {\em J. of the Amer. Math. Society}, 29(3):473--493, 2016.

\bibitem[Hel16]{helm_bernstein_center}
David Helm.
\newblock The bernstein centre of the category of smooth
  ${W}(k)[\textup{GL}_n({F})]$-modules.
\newblock {\em Forum Math. Sigma}, 4, 2016.
\newblock ARTN e11.

\bibitem[How79]{howe}
R.~Howe.
\newblock $\theta$-series and invariant theory.
\newblock In {\em Automorphic forms, Representations and L-functions}, pages
  275--285. American Math. Soc., 1979.

\bibitem[Kud86]{kudla_invent}
S.~S. Kudla.
\newblock On the local theta correspondence.
\newblock {\em Invent. Math.}, 83:229--255, 1986.

\bibitem[M{\'i}n06]{minguez_thesis}
Alberto M{\'i}nguez.
\newblock {\em Correspondance de {H}owe $\ell$-modulaire : paires duales de
  type \textrm{II}}.
\newblock PhD thesis, Université d'Orsay, 2006.

\bibitem[M{\'i}n08]{minguez_howe}
A.~M{\'i}nguez.
\newblock Correspondance de howe explicite : paires duales de type \textrm{II}.
\newblock {\em Ann. Sci. \'Ec. Norm. Supér.}, 41:717--741, 2008.

\bibitem[MVW87]{mvw}
Colette Moeglin, Marie-France Vigneras, and Jean-Loup Waldspurger.
\newblock {\em Correspondance de Howe sur un corps $p$-adique}.
\newblock Springer, 1987.

\bibitem[Ral84]{rallis}
S.~Rallis.
\newblock On the {Howe} duality conjecture.
\newblock {\em Compositio Mathematica}, 51(3):333--3999, 1984.

\bibitem[Shi12]{shin_weil}
S.~W. Shin.
\newblock Abelian varieties and the weil representations.
\newblock {\em Algebra and Number Theory}, 6(8):1719--1772, 2012.

\bibitem[Tri20]{trias_theta1}
Justin Trias.
\newblock Correspondance thêta locale $\ell$-modulaire \textrm{I} : groupe
  métaplectique, représentation de {W}eil et {$\Theta$}-lift.
\newblock 2020.
\newblock arXiv:2009.11561.

\bibitem[Vig96]{vig_book}
Marie-France Vignéras.
\newblock {\em Représentations $\ell$-modulaires d'un groupe réductif
  $p$-adique avec $\ell \neq p$}.
\newblock Birkhäuser, 1996.

\bibitem[Vig01]{vig_corresp_Langlands_ss_mod_ell}
Marie-France Vignéras.
\newblock Correspondance de {L}anglands semi-simple pour $\textup{GL}(n,{F})$
  modulo $\ell \neq p$.
\newblock {\em Invent. Math.}, 2001.

\bibitem[Wal90]{wald}
J.-L. Waldspurger.
\newblock Démonstration d'une conjecture de howe ddans le cas $p$-adique, $p
  \neq 2$.
\newblock In {\em Festschrift in honor of I. I. Piatetski-Shapiro on the
  occasion of his sixtieth birthday, Part I}, Israel Math. Conf. Proc. 2, pages
  267--324. Weizmann, Jerusalem, 1990.

\bibitem[Wei64]{weil_article}
A.~Weil.
\newblock Sur certains groupes d'opérateurs unitaires.
\newblock {\em Acta Math.}, 111:143--211, 1964.

\end{thebibliography}

\noindent \textsc{Departement of Mathematics, Imperial College London, United Kingdom} \\
\textit{Email address:} \texttt{jtrias@ic.ac.uk}

\end{document}